\newtheorem{theorem}{Theorem}[section]
\newtheorem{lemma}{Lemma}[section]
\newtheorem{assumption}{Assumption}[section]
\newtheorem{corollary}{Corollary}[section]
\newtheorem*{remark}{Remark}
\newtheorem*{proof}{Proof}
\newtheorem*{proofof}{Proof\hspace{-1pt}}
\newcommand{\red}{}
\newcommand{\blue}{}
\renewcommand{\d}{\mathrm{d}}
\newcommand{\ep}{\varepsilon}
\newcommand{\vp}{\varphi}
\newcommand{\W}{\mathcal{W}}
\newcommand{\Ge}{\geqslant}
\newcommand{\Le}{\leqslant}
\renewcommand{\P}{\mathcal{P}}
\newcommand{\D}{\mathcal{D}}
\newcommand{\C}{\mathcal{C}}
\newcommand{\N}{\mathcal{N}}
\newcommand{\M}{\mathcal{M}}
\newcommand{\RB}{\mathrm{RB}}
\newcommand{\Law}{\mathrm{Law}}
\newcommand{\setzero}{\setlength{\itemsep}{0pt}}
\newcommand{\di}{\displaystyle}
\numberwithin{equation}{section}
\title{Error Analysis of Time-Discrete Random Batch Method for Interacting Particle Systems and Associated Mean-Field Limits}
\newcommand{\BICMR}{Beijing International Center for Mathematical Research, Peking University, Beijing, 100871, P. R. China.}
\author{Xuda Ye\thanks{\BICMR
Email: abneryepku@pku.edu.cn} 
\and
Zhennan Zhou\thanks{\BICMR
Email: zhennan@bicmr.pku.edu.cn}}
\begin{document}
\maketitle
\begin{abstract}
The random batch method provides an efficient algorithm for computing statistical properties of a canonical ensemble of interacting particles. In this work, we study the error estimates of the fully discrete random batch method, especially in terms of approximating the invariant distribution.
The triangle inequality framework proposed in this paper is a convenient approach to estimate the long-time sampling error of the numerical methods. Using the triangle inequality framework, we show that the long-time error of the discrete random batch method is $O(\sqrt{\tau} + e^{-\lambda t})$, where $\tau$ is the time step and $\lambda$ is the convergence rate which does not depend on the time step $\tau$ or the number of particles $N$. Our results also apply to the McKean--Vlasov process, which is the mean-field limit of the interacting particle system as the number of particles $N\rightarrow\infty$.
\end{abstract}
{\small
\textbf{Keywords}
random batch method, interacting particle system, McKean--Vlasov process, mean-field limit, long-time error estimate\\[4pt]
\textbf{AMS subject classifications}
65C20, 37M05}

\section{Introduction}
Simulation of large interacting particle systems (IPS) has always been an appealing research topic in computational physics \cite{cpc_1,cpc_2} and computational chemistry \cite{cpc_3,cpc_4}. It is not only because the IPS itself is an important model in molecular dynamics and quantum mechanics, but also because the IPS has a mathematically well-defined mean-field limit \cite{mean_1,mean_2,mean_3,mean_4} as the number of particles tends to infinity. The mean-field dynamics of the IPS is a distribution-dependent SDE, also known as the McKean-Vlasov process (MVP), has been frequently used in statistical physics to describe the ensemble behavior of a system of particles \cite{MVP_1,MVP_2}.
In this paper we focus on a simple IPS model, which is evolved by the overdamped Langevin dynamics with only pairwise interactions.

Consider the system of $N$ particles in $\mathbb R^{Nd}$ represented by a collection of position variables $X_t = \{X_t^i\}_{i=1}^N$, where the subscript $t\Ge0$ denotes the evolution time and each particle $X_t^i\in\mathbb R^d$ is evolved by the overdamped Langevin dynamics
\begin{equation}
	\d X_t^i = \bigg(b(X_t^i) + \frac1{N-1}
	\sum_{j\neq i}K(X_t^i-X_t^j)\bigg)\d t + \sigma\d W_t^i.
	\label{IPS}
\end{equation}
Here, $b:\mathbb R^d\rightarrow\mathbb R^d$ is the drift force, $K:\mathbb R^d\rightarrow\mathbb R^d$ is the interaction force, $\sigma>0$ is the diffusion coefficient, and $\{W_t^i\}_{i=1}^N$ are $N$ independent Wiener processes in $\mathbb R^d$. Formally, the mean-field limit of \eqref{IPS} as $N\rightarrow\infty$ is the MVP represented by a single position variable $\bar X_t\in\mathbb R^d$
\begin{equation}
\begin{aligned}
	\d \bar X_t & = \bigg(b(\bar X_t) + \int_{\mathbb R^d}
	K(\bar X_t-z)\nu_t(\d z)\bigg)
	\d t + \sigma \d W_t, \\
	\nu_t & = \Law(\bar X_t).
\end{aligned}
\label{MVP}
\end{equation}
Here, $\Law(\cdot)$ denotes the distribution law of a random variable, and $W_t$ is the Wiener process in $\mathbb R^d$.
The convergence mechanism of the IPS \eqref{IPS} towards the MVP \eqref{MVP} as $N\rightarrow\infty$ has been systemically studied in the theory of the propagation of chaos \cite{chaos_4,chaos_3}.

The goal of this paper is to study the sampling accuracy of the numerical methods for the IPS \eqref{IPS} and the MVP \eqref{MVP}. To characterize the sampling accuracy of a numerical method at different time scales, it is reasonable to ask the following two questions:
\begin{enumerate}
\setzero
\item In the finite-time level, does the method produce accurate trajectories?
\item In the long-time level, does the method sample the correct invariant distribution?
\end{enumerate}
To be specific, suppose the numerical method for the IPS \eqref{IPS} with the time step $\tau$ produces a discrete-time trajectory $\{\tilde X_n\}_{n\Ge0}$ in $\mathbb R^{Nd}$, where the subscript $n$ is a nonnegative integer representing the number of iterations.
Then we expect $\{\tilde X_n\}_{n\Ge0}$ is a good approximation the discrete-time IPS trajectory $\{X_{n\tau}\}_{n\Ge0}$; and for sufficiently large $n$,
the numerical distribution law $\Law(\tilde X_n)$ is close to the invariant distribution of the IPS \eqref{IPS}.

In this paper we shall consider the Euler--Maruyama scheme, which provides a simple numerical method for the IPS \eqref{IPS}. Fix the time step $\tau$ and define $t_n:=n\tau$, then the IPS \eqref{IPS} is approximated by a system of particles $\tilde X_n = \{\tilde X_n^i\}_{i=1}^N$, where each particle $\tilde X_n^i\in\mathbb R^d$ in the time interval $[t_n,t_{n+1})$ is updated by the following stochastic equation
\begin{equation}
	\tilde X_{n+1}^i = \tilde X_n^i +
	\bigg(b(\tilde X_n^i) + 
	\frac1{N-1}\sum_{j\neq i}K(\tilde X_n^i-\tilde X_n^j)\bigg)
	\tau + \sigma(W_{t_{n+1}}^i - W_{t_n}^i),
	\label{d-IPS}
\end{equation}
which we shall refer to as the discrete IPS thereafter. We note that the discrete IPS \eqref{d-IPS} is also known as the stochastic particle method \cite{numerical_1,numerical_2}, which can be applied in a wide class of MVPs, and the associated error analysis can be found in \cite{numerical_3,numerical_4,numerical_5,numerical_6}.
To update the discrete IPS \eqref{d-IPS} in a single time step, we need to compute all the pairwise interactions $K(\tilde X_t^i - \tilde X_t^j)$, hence the computational cost per time step is $O(N^2)$. Such huge complexity brings great burden when $N$ is large.

The Random Batch Method (RBM) proposed in \cite{rbm_1} resolves the complexity burden in the discrete IPS \eqref{d-IPS} with a simple idea: for each time step, compute the interaction forces within small random batches.
For each $n\Ge0$, let the index set $\{1,\cdots,N\}$ be randomly divided into $q$ batches $\D=\{\C_1,\cdots,\C_q\}$, where each batch $\C\in\D$ has the equal size $p = N/q$ where the integer $p\Ge2$. 
We compute the interaction force between two particles only when their indices $i,j$ belong to the same batch. The discrete IPS \eqref{d-IPS} is then approximated by the discrete random batch interacting particle system (discrete RB--IPS), represented by a system of particles $\tilde Y_n = \{\tilde Y_n^i\}_{i=1}^N$ in $\mathbb R^{Nd}$, where each particle $\tilde Y_n^i\in\mathbb R^d$ is updated by
\begin{equation}
	\tilde Y_{n+1}^i = \tilde Y_n^i +
	\bigg(b(\tilde Y_n^i) + 
	\frac1{p-1}\sum_{j\neq i,j\in\C}K(\tilde Y_n^i-\tilde Y_n^j)\bigg)
	\tau + \sigma(W_{t_{n+1}}^i-W_{t_n}^i),
	~~i\in \C.
	\label{d-RB-IPS}
\end{equation}
Here, $\C\in\D$ is the unique batch containing $i$.
For the next time interval, the previous division $\D$ is discarded and another random division is employed.
The discrete RB--IPS \eqref{d-RB-IPS} requires only $O(Np)$ rather than $O(N^2)$ complexity to compute the interaction forces in a time step, which is a significant advance in simulation efficiency.

Nowadays the RBM has become a prominent simulation tool for large particle systems.
It is not only a highly efficient numerical method for complex chemical systems \cite{rbm_2,rbm_3,rbm_4,rbm_5}, but also accelerates the particle ensemble methods \cite{pem_1,pem_2,pem_3} for optimization or solving PDEs. 
There have been some theoretical results on the error analysis of the RBM, but they mainly focus on the continuous-time random batch interacting particle system (RB--IPS, defined in \eqref{RB-IPS}). In the finite-time level, it was proved in \cite{rbme_1} that the strong and weak error are $O(\sqrt{\tau})$ and $O(\tau)$ respectively; while in the long-time level, the authors of \cite{rbme_2} applied the reflection coupling \cite{reflect_1,reflect_2} to show that the RB--IPS has uniform geometric ergodicity, and the Wasserstein-1 distance between the invariant distributions of the IPS and the RB--IPS is bounded by $O(\sqrt{\tau})$. However, the error analysis of the discrete RB--IPS \eqref{d-RB-IPS} is not a direct consequence of the results for the RB--IPS. Moreover, the long-time behavior of the discrete RB--IPS \eqref{d-RB-IPS} poses additional challenges because it is fairly non-trivial to obtain an explicit convergence rate towards the invariant distribution.
Therefore, it is necessary to perform the error analysis for the discrete RB--IPS \eqref{d-RB-IPS}, which is the main task of this paper.

The triangle inequality framework proposed in this paper is our main technique to study the long-time sampling error. This framework is inspired from Mattingly \cite{tri_2,tri_3} and Durmus \cite{tri_4}, and can be  conveniently applied in a wide class of numerical methods. For a given stochastic process and the corresponding numerical method, the triangle inequality framework is able to utilize the ergodicity of the original process and the finite-time error analysis to estimate the long-time error.
Furthermore, with the triangle inequality framework, it is easy to produce an explicit convergence rate, which is independent of the time step $\tau$ or other parameters.
In particular, for the IPS \eqref{IPS} and its corresponding numerical method---the discrete RB--IPS \eqref{d-RB-IPS}, the convergence rate is independent of the number of particles $N$.

Before we elaborate the principle of the triangle inequality framework in Section 2, we state the main results of this paper. These results are proved by combining the triangle inequality framework and the error analysis results for the RB--IPS in \cite{rbme_1,rbme_2}.
\begin{enumerate}
\item \textbf{(Theorem \ref{theorem:strong error}) The finite-time strong error is $O(\sqrt{\tau})$.} 

When the IPS \eqref{IPS} and the discrete RB--IPS \eqref{d-RB-IPS} are driven by the same initial value and Wiener processes, there exists a positive constant $C=C(T)$ such that
\begin{equation}
\sup_{0\Le n\Le T/\tau}
\frac1N \sum_{i=1}^N
\mathbb E|X_{n\tau}^i - \tilde Y_n^i|^2\Le C\tau.
\label{IPS finite error}
\end{equation}
The constant $C$ does not on $N,\tau$ or $p$.
\item \textbf{(Theorem \ref{theorem:IPS sample quality})
The long-time sampling error is $O(\sqrt{\tau}+e^{-\lambda t})$.}

When the interaction force $K$ is moderately large, there exist constants $C,\lambda>0$ such that 
\begin{equation}
	\W_1(\pi,\Law(\tilde Y_n)) \Le C\sqrt{\tau} + Ce^{-\lambda n\tau},~~~~
	\forall n\Ge0,
\label{IPS long error}
\end{equation}
where $\pi\in\P(\mathbb R^{Nd})$ is the invariant distribution of the IPS \eqref{IPS} and $\W_1$ is the normalized Wasserstein-1 distance defined in \eqref{Wasserstein}. The constants $C,\lambda$ do not depend on $N,\tau$ or $p$.
\end{enumerate}
In the long-time sampling error \eqref{IPS long error},
the order of accuracy in the time step $\tau$ may not be optimal. This is because we have used the strong error estimate \eqref{IPS finite error} in the triangle inequality framework to prove \eqref{IPS long error} (see  Section 2.3). Nevertheless, the convergence rate $\lambda$ does not depend on the number of particles $N$, the time step $\tau$ or the batch size $p$.

Using the results in the propagation of chaos \cite{chaos_1,chaos_3},
we show that the discrete RB--IPS \eqref{d-RB-IPS} is also a reliable numerical method for the MVP \eqref{MVP}. In particular, the invariant distribution of the MVP \eqref{MVP} can be approximated by the empirical distribution of the discrete RB--IPS \eqref{d-RB-IPS} by choosing the number of particles $N$ sufficiently large.
\begin{enumerate}
\setzero
\item \textbf{(Corollary \ref{corollary:RB-IPS strong})  The finite-time strong error is $O\big(\sqrt{\tau} + \frac1{\sqrt{N}}\big)$.} 

When $N$ duplicates of the MVP \eqref{MVP} and the discrete RB--IPS \eqref{d-RB-IPS} are driven by the same initial value and Wiener processes, there exists a positive constant $C=C(T)$ such that
\begin{equation}
\sup_{0\Le n\Le T/\tau}
\frac1N \sum_{i=1}^N
\mathbb E|\bar X_{n\tau}^i - \tilde Y_n^i|^2\Le C\tau + \frac CN,
\label{MVP finite error}
\end{equation}
where $\{\bar X_t^i\}_{t\Ge0}$ is the $i$-th duplicate of the MVP \eqref{MVP}. The constant $C$ does not depend on $N,\tau$ or $p$.
\item \textbf{(Corollary \ref{corollary:MVP sample quality}) The long-time sampling error is $O\big(\sqrt{\tau}+e^{-\lambda t}+\frac1{\sqrt{N}}\big)$.}

When the interaction force $K$ is moderately large, there exist constants $C,\lambda>0$ such that 
\begin{equation}
	\mathbb E\big[\W_1(\bar\pi,\tilde\mu_{n\tau}^{\RB})\big] \Le C\sqrt{\tau} + Ce^{-\lambda n\tau} + \frac{C}{\sqrt{N}},~~~~
	\forall n\Ge0,
\label{MVP long error}
\end{equation}
where $\bar\pi\in\P(\mathbb R^d)$ is the invariant distribution of the MVP (\ref{MVP}), and $\tilde \mu_{n\tau}^{\RB}$ is the empirical measure of the $N$-particle system $\{\tilde Y_n^i\}_{i=1}^N$, i.e.,
\begin{equation}
	\tilde \mu_{n\tau}^{\RB}(x) = \frac1N\sum_{i=1}^N 
	\delta(x-\tilde Y_n^i)\in\P(\mathbb R^d).
\end{equation}
The constants $C,\lambda$ do not depend on $N,\tau$ or $p$.
\end{enumerate}
The paper is organized as follows. In Section 2 we introduce the triangle inequality framework for estimating the long-time sampling error. In Section 3 we prove \eqref{IPS finite error}\eqref{IPS long error} for the IPS \eqref{IPS}. In Section 4 we prove \eqref{MVP finite error}\eqref{MVP long error} for the MVP \eqref{MVP}.
\section{Triangle inequality for long-time error analysis}
In general, the long-time error analysis of a numerical method is much more difficult than the finite-time error analysis, whose proof is standard and can be found in textbooks, e.g., Chapter 7.5 of \cite{asp}. Nevertheless, Mattingly \cite{tri_2,tri_3} and Durmus \cite{tri_4} proposed a special strategy---which we refer to as the triangle inequality framework in this paper---to address the problem of the long-time error analysis. The idea of this framework is simple. In addition to the known results in the finite-time error analysis, one only needs the geometric ergodicity of the stochastic dynamics to perform the long-time error analysis. In short words, the geometric ergodicity with the finite-time error yields the long-time error.

In the rest part of this section, we first review the original approaches employed in \cite{tri_2,tri_3,tri_4} for the long-time error analysis. Motivated by their results, we propose a general lemma on the long-time error analysis. Finally, we demonstrate why the triangle inequality framework can be applied in a wide class of stochastic dynamics, including the discrete RB--IPS \eqref{d-RB-IPS}.

\subsection{A historical review}
The geometric ergodicity is the key property to describe the long-time behavior of a stochastic process, and is essential to build up the triangle inequality framework. For simplicity, consider the continuous-time stochastic process $\{X_t\}_{t\Ge0}$, whose transition probability is $(p_t)_{t\Ge0}$.
Let $\P(\mathbb R^d)$ be the space of all probability distributions on $\mathbb R^d$, then for any $\nu\in\P(\mathbb R^d)$, $\nu p_t\in\P(\mathbb R^d)$ is the distribution law of $X_t$ provided $X_0\sim\nu$.
\begin{comment}
\end{comment}
Given the metric $d(\cdot,\cdot)$ on $\P(\mathbb R^d)$,
the stochastic process $\{X_t\}_{t\Ge0}$ is said to have \emph{geometric ergodicity}, if it has an invariant distribution $\pi\in\P(\mathbb R^d)$, and
there exist positive constants $C,\beta$ such that
\begin{equation}
	d(\nu p_t,\pi) \Le Ce^{-\beta t}d(\nu,\pi),~~~~
	\forall \nu\in\P(\mathbb R^d).
	\label{geometric ergodicity}
\end{equation}
In other words, the distribution law $\nu p_t$ converges to the invariant distribution $\pi$ exponentially, and $\beta$ is the convergence rate.

Now consider another stochastic process $\{\tilde X_t\}_{t\Ge0}$ with transition probability $(\tilde p_t)_{t\Ge0}$, which can be viewed as an approximation to the original process $\{X_t\}_{t\Ge0}$.
For example, $\{X_t\}_{t\Ge0}$ is the solution to an SDE, while $\{\tilde X_t\}_{t\Ge0}$ is given by the Euler--Maruyama scheme. To characterize the long-time error of $\{\tilde X_t\}_{t\Ge0}$, the following two questions are proposed in \cite{tri_3}:
\begin{enumerate}
\setzero
\item Does $\{\tilde X_t\}_{t\Ge0}$ has a unique invariant distribution $\tilde\pi\in\P(\mathbb R^d)$?
\item If so, what is the difference between $\pi$ and $\tilde\pi$?
\end{enumerate}
The first question can be directly addressed by the Harris ergodic theorem \cite{tri_2,var_1,reflect_3,invariant}. For the second question, a special triangle inequality was adopted in \cite{tri_3} to estimate the difference between  $\pi$ and $\tilde\pi$. Under the same metric $d(\cdot,\cdot)$, assume the \emph{finite-time difference relation} between the distribution laws $\nu p_t,\nu\tilde p_t$ is known, that is, for any $T>0$  there exists a constant $\ep(T)$ such that
\begin{equation}
	\sup_{0\Le t\Le T}
	d(\nu p_t,\nu \tilde p_t) \Le \ep(T),~~~~
	\forall \nu \in \P(\mathbb R^d).
	\label{finite-time diff}
\end{equation}
Here, $T$ is a reference evolution time of the processes $\{X_t\}_{t\Ge0},\{\tilde X_t\}_{t\Ge0}$, and $(p_t)_{t\Ge0},(\tilde p_t)_{t\Ge0}$ are the corresponding transition probabilities.
If we choose the the metric $d(\cdot,\cdot)$ to be the Wasserstein-1 distance, and derive the finite-time difference relation \eqref{finite-time diff}  from the standard strong error estimate, then the error bound $\ep(T)$ is approximately
\begin{equation}
	\ep(T) \approx O(e^{CT}\sqrt{\tau}),
	\label{ep bound}
\end{equation}
where $\tau>0$ is the time step used in time discretization.
\eqref{ep bound} implies that $\ep(T)$ grows exponentially with the evolution time $T$, and $\ep(T)$ is bounded by $O(\sqrt{\tau})$ with a fixed $T$.

Provided the geometric ergodicity \eqref{geometric ergodicity} and the finite-time difference relation \eqref{finite-time diff}, we can now use the triangle inequality to estimate $d(\pi,\tilde\pi)$. In fact, for any $T>0$, we have
\begin{align}
	d(\pi,\tilde\pi) & = d(\pi p_T,\tilde\pi \tilde p_T) \notag \\
	& \Le d(\pi p_T,\tilde \pi p_T) + d(\tilde \pi p_T, \tilde\pi\tilde p_T) \notag \\
	& \Le Ce^{-\beta T} d(\pi,\tilde\pi) + \ep(T).
	\label{triangle inequality}
\end{align}
Hence if we choose $T = T_0$ in \eqref{triangle inequality}
to satisfy $Ce^{-\beta T_0} = 1/2$, then
\begin{equation}
	d(\pi,\tilde\pi) \Le 2\ep(T_0),
\end{equation}
which measures the difference between the invariant distributions $\pi$ and $\tilde\pi$. Since $T_0$ is a fixed value, we have $\ep(T_0) \approx O(\sqrt\tau)$, hence approximately $d(\pi,\tilde\pi) \Le O(\sqrt{\tau})$.

The triangle inequality used in \eqref{triangle inequality} is essentially the same with Remark 6.3 of \cite{tri_3}, and also previously appeared in \cite{tri_1,tri_2}. The benefit of the triangle inequality \eqref{triangle inequality} is obvious: it does not require the ergodicity of the approximation $\{\tilde X_t\}_{t\Ge0}$ to estimate the difference between $\pi$ and $\tilde\pi$. It only requires the geometric ergodcity of the original process $\{X_t\}_{t\Ge0}$, and the finite-time difference relation \eqref{finite-time diff}.
The drawback of the triangle inequality \eqref{triangle inequality} is that it does not tell how fast the distribution law of $\{\tilde X_t\}_{t\Ge0}$ converges to the invariant distribution $\tilde\pi\in\P(\mathbb R^d)$. Although the Harris ergodic theorem ensures that $\nu \tilde p_t$ converges to $\tilde\pi$ exponentially \cite{tri_2,tri_3}, it is usually difficult to make the convergence rate independent of the time step $\tau$ (see Theorem 7.3 of \cite{tri_2} for example).

In a recent paper \cite{tri_4}, the authors have utilized the geometric ergodicity and the triangle inequality to estimate the long-time sampling error of a given numerical method. Instead of calculating the difference between invariant distributions $d(\pi,\tilde\pi)$ directly, one turns to estimate $d(\nu\tilde p_t,\pi)$ for large $t$, that is, the difference between the numerical distribution law $\nu\tilde p_t$ and the true invariant distribution $\pi$.
For large $t$, $d(\nu\tilde p_t,\pi)$ can be interpreted as the long-time sampling error of the approximation $\{\tilde X_t\}_{t\Ge0}$. Also, one avoids computing the numerical invariant distribution $\tilde\pi$ directly. Although the proof strategies used in \cite{tri_3,tri_4} are quite different, it is clear that the triangle inequality plays an important role in estimating the long-time sampling error.

Based on the original triangle inequality adopted in \cite{tri_3}, and the idea of using $d(\nu\tilde p_t,\pi)$ instead of $d(\pi,\tilde\pi)$ in \cite{tri_4}, we propose the triangle inequality framework in the next subsection.
By choosing the metric $d(\cdot,\cdot)$ to be the Wasserstein-1 distance, we expect the long-time sampling error $d(\nu\tilde p_t,\pi)$ is bounded by
\begin{equation}
	d(\nu \tilde p_t,\pi) \Le O(\sqrt{\tau} + e^{-\lambda t}), ~~~~\forall t>0,
	\label{long-time sampling error}
\end{equation}
where the constant $\lambda>0$ does not depend on the time step $\tau$. Clearly, $d(\nu\tilde p_t,\pi)$ consists of two parts: the finite-time strong error $O(\sqrt{\tau})$ and the exponential convergence part $O(e^{-\lambda t})$. Although $\lambda$ does not indicate the geometric ergodicity of the approximation $\{\tilde X_t\}_{t\Ge0}$ itself, it does reveal the fact that the sampling efficiency of $\{\tilde X_t\}_{t\Ge0}$ can be uniform in the time step $\tau$.

We summarize the major differences between our work and the results in \cite{tri_2,tri_3,tri_4}. First, our work considers the numerical methods for the IPS \eqref{IPS}, which is a multi-particle system rather than a single particle. The geometric ergodicity of the IPS \eqref{IPS} is guaranteed by the reflection coupling \cite{reflect_1,reflect_2}, while their results mainly rely on the Harris ergodic theorem. This also leads to a difference in the choice of the metric $d(\cdot,\cdot)$: we shall always employ the normalized Wasserstein-1 distance, while their results mainly involve the weighted total variation \cite{var_1}. Second, the numerical method in our work involves the random batch approximations, which is more complicated than the standard Euler-Maruyama scheme. Finally, the triangle inequality used in this work is a variant of \eqref{triangle inequality} in \cite{tri_3} rather than the one used in \cite{tri_4}.
\subsection{Main lemma for the long-time error estimate}
We state the main lemma for the long-time error estimate, \red{which is the key conclusion of the triangle inequality framework.}
\begin{lemma}
\label{lemma:general}
Let $\{X_t\}_{t\Ge0}$, $\{\tilde X_t\}_{t\Ge0}$ be stochastic processes in $\mathbb R^d$ with transition probabilities $(p_t)_{t\Ge0}$, $(\tilde p_t)_{t\Ge0}$.
\blue{Given the metric $d(\cdot,\cdot)$ on $\P(\mathbb R^d)$,}
assume \red{$(p_t)_{t\Ge0}$ has an invariant distribution $\pi\in\P(\mathbb R^d)$ and} there exist constants $C,\beta>0$ such that
\begin{equation*}
	d(\nu p_t,\pi) \Le Ce^{-\beta t}d(\nu,\pi),~~~~
	\forall \nu\in\P(\mathbb R^d).
\end{equation*}
and for any $T>0$, there exists a constant $\ep(T)$ such that
\begin{equation*}
	\sup_{0\Le t\Le T}
	d(\nu \tilde p_t,\nu p_t) \Le \ep(T),~~~~
	\forall \nu\in\P(\mathbb R^d).
\end{equation*}
Then there exist constants $T_0,\lambda>0$ such that
\begin{equation}
	d(\nu \tilde p_t,\pi) \Le 2\ep(T_0) + 2M_0 e^{-\lambda t},~~~~
	\forall t\Ge0,
	\label{prop2 long time error}
\end{equation}
where $\di M_0 := \sup_{s\in [0,T_0]} d(\nu\tilde p_s,\pi)$.
\end{lemma}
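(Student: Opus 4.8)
\emph{Proof proposal.} The plan is to fix a single reference time $T_0$ at which the original process has already contracted by a factor at most $1/2$ in the metric $d$, and then iterate a one-step estimate over blocks of length $T_0$. Since $Ce^{-\beta t}\to 0$ as $t\to\infty$, I would fix $T_0>0$ large enough that $\theta:=Ce^{-\beta T_0}\Le 1/2$, and set $\lambda:=(\ln 2)/T_0$.

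The heart of the argument is a recursive inequality for $g(t):=d(\nu\tilde p_t,\pi)$. For any $t\Ge0$, using the Chapman--Kolmogorov identity $\nu\tilde p_{t+T_0}=(\nu\tilde p_t)\tilde p_{T_0}$, the triangle inequality for $d$, the finite-time difference relation applied with the initial law $\nu\tilde p_t$, the invariance of $\pi$ under $p_{T_0}$, and the geometric ergodicity applied again with the initial law $\nu\tilde p_t$, I obtain
\begin{equation*}
g(t+T_0)\Le d\big((\nu\tilde p_t)\tilde p_{T_0},(\nu\tilde p_t)p_{T_0}\big)+d\big((\nu\tilde p_t)p_{T_0},\pi\big)\Le \ep(T_0)+\theta\, g(t).
\end{equation*}
It is essential here that both hypotheses are stated for \emph{every} $\nu\in\P(\mathbb R^d)$, since they are invoked at the moving distribution $\nu\tilde p_t$ rather than at the fixed $\nu$.

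Next I would iterate this bound. Given $t\Ge0$, write $t=r+kT_0$ with $k\in\mathbb N$ and $r\in[0,T_0)$; a straightforward induction on $k$ gives
\begin{equation*}
g(r+kT_0)\Le \ep(T_0)\sum_{j=0}^{k-1}\theta^{j}+\theta^{k}g(r)\Le \frac{\ep(T_0)}{1-\theta}+\theta^{k}M_0\Le 2\ep(T_0)+\theta^{k}M_0,
\end{equation*}
using $\theta\Le 1/2$ and $g(r)\Le M_0$ by definition of $M_0$. It remains to convert the geometric factor into a genuine exponential: since $\theta\Le 1/2$ and $t<(k+1)T_0$, i.e. $k>t/T_0-1$, one has $\theta^{k}\Le 2^{-k}\Le 2\cdot 2^{-t/T_0}=2e^{-\lambda t}$ (and for $k=0$ this reads $1\Le 2e^{-\lambda t}$, which holds because $t<T_0$). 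Substituting yields $g(t)\Le 2\ep(T_0)+2M_0 e^{-\lambda t}$ for all $t\Ge0$, which is \eqref{prop2 long time error}.

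I do not expect a serious obstacle; the argument is essentially bookkeeping. The two points needing care are (i) applying the two hypotheses at the shifted law $\nu\tilde p_t$, which is legitimate precisely because of the ``$\forall\nu$'' quantifiers, and (ii) passing from the block-wise geometric decay $\theta^{k}$ to a uniform-in-$t$ exponential $e^{-\lambda t}$, which forces the choice $\lambda=(\ln 2)/T_0$ together with the elementary bound $k>t/T_0-1$. I would also remark in passing that $M_0$ is finite whenever $\ep(T_0)<\infty$ and $d(\nu,\pi)<\infty$, since $d(\nu\tilde p_s,\pi)\Le \ep(T_0)+Ce^{-\beta s}d(\nu,\pi)\Le \ep(T_0)+Cd(\nu,\pi)$ for $s\in[0,T_0]$; otherwise \eqref{prop2 long time error} is trivial.
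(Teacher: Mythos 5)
Your proposal is correct and follows essentially the same route as the paper: the same one-block triangle inequality $d(\nu\tilde p_{t+T_0},\pi)\Le \ep(T_0)+Ce^{-\beta T_0}d(\nu\tilde p_t,\pi)$ applied at the shifted law, iterated over blocks of length $T_0$ with $Ce^{-\beta T_0}\Le 1/2$, and the geometric factor converted into $e^{-\lambda t}$ with $\lambda=\ln 2/T_0$. The only differences are cosmetic (choosing $\theta\Le 1/2$ rather than $=1/2$, and the explicit finiteness remark on $M_0$), so no further comment is needed.
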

\begin{proof}
We still estimate $d(\nu\tilde p_t,\pi)$ using the triangle inequality. For any $T>0$ and $t\Ge T$,
\begin{align*}
	d(\nu\tilde p_t,\pi) & \Le d(\nu\tilde p_{t-T}\tilde p_T,\nu\tilde p_{t-T} p_T) + 
	d(\nu \tilde p_{t-T} p_T,\pi p_T) \\
	& \Le \ep(T) + Ce^{-\beta T}
	d(\nu\tilde p_{t-T},\pi).
\end{align*}
By choosing $T = T_0$ such that $Ce^{-\beta T_0}=1/2$, we have
\begin{equation}
	d(\nu \tilde p_t,\pi) \Le \ep(T_0) + \frac12 d(\nu \tilde p_{t-T_0},\pi),~~~~
	\forall t\Ge T_0.
	\label{triangle ii}
\end{equation}
By induction on the integer $n\Ge0$, we obtain
\begin{equation}
	d(\nu \tilde p_t,\pi) \Le 2\bigg(1-\frac1{2^n}\bigg)\ep(T_0) + \frac1{2^n} d(\nu \tilde p_{t-nT_0},\pi),~~~~
	\forall t\Ge nT_0.
\end{equation}
For any $t\in[0,+\infty)$, there exists a unique integer $n\Ge0$ such that $t\in[nT_0,(n+1)T_0)$. Then
\begin{equation}
	d(\nu\tilde p_t,\pi) \Le 2\ep(T_0) + 2^{1-t/{T_0}} \sup_{s\in[0,T_0]} d(\nu\tilde p_s,\pi),
\end{equation}
which implies the long-time error estimate \eqref{prop2 long time error} with $\lambda = \ln 2/T_0$.
\end{proof}

The conditions in Lemma \ref{lemma:general} are exactly the geometric ergodicity \eqref{geometric ergodicity} and the finite-time difference relation \eqref{finite-time diff}, and the result \eqref{prop2 long time error} characterizes the long-time sampling error of the stochastic process $\{\tilde X_t\}_{t\Ge0}$.
\red{The triangle inequality used in \eqref{triangle ii} is essential in the proof of Lemma \ref{lemma:general}, which is the reason that Lemma \ref{lemma:general} is referred to as the triangle inequality framework.}
In particular, when $d(\cdot,\cdot)$ is the Wasserstein-1 distance, $\ep(T_0)$ is of order $O(\sqrt{\tau})$, and thus we recover the result in \eqref{long-time sampling error}.
Now we briefly summarize the pros and cons of the triangle inequality framework.
\begin{enumerate}
\setzero
\item It requires only the geometric ergodicity of the original dynamics $\{X_t\}_{t\Ge0}$. The existence of the invariant distribution for $\{\tilde X_t\}_{t\Ge0}$ is not required. This allows us to study a wide class of numerical methods, including the IPS and the methods with stochastic gradient or random batch approximations.
\item It produces an explicit convergence rate $\lambda>0$, which can be easily made independent of the time step $\tau$ and other parameters. In fact, $\lambda$ is uniquely determined by the parameters $C,\beta$ in the geometric ergodicity condition \eqref{geometric ergodicity}. In the discrete IPS \eqref{d-IPS} and the discrete RB--IPS \eqref{d-RB-IPS}, $\lambda$ is independent of the number of particles $N$.
\item The geometric ergodicity condition \eqref{geometric ergodicity} is very restrictive in the choice of the metric $d(\cdot,\cdot)$. 
\red{Here, the metric $d(\cdot,\cdot)$ must be symmetric in its two arguments and satisfy the triangle inequality.}
As a consequence, the convergence in entropy
\begin{equation}
	\red{H(\nu p_t|\pi) \Le Ce^{-\beta t} H(\nu|\pi),~~~~
	\forall \nu\in\P(\mathbb R^d)}
\end{equation}
cannot be used to prove the geometric ergodicity condition \eqref{geometric ergodicity} because \red{the relative entropy $H(\cdot|\pi)$ is not symmetric, despite the fact that it is stronger than the Wasserstein distance (Talagrand's inequality \cite{Talagrand}) and the total variation (Pinsker's inequality).}
\item The finite-time difference relation \eqref{finite-time diff} must be derived with a metric at least stronger than $d$, which \blue{might} make the order of accuracy not optimal. For example, when $d$ is the Wasserstein-1 distance, \eqref{finite-time diff} can be naturally derived from the strong error estimate, but the order of accuracy is only $O(\sqrt{\tau})$. It is still challenging for the triangle inequality framework to yield better accuracy in the time step.
\end{enumerate}
In short words, as long as the original dynamics $\{X_t\}_{t\Ge0}$ satisfies the geometric ergodicity condition \eqref{geometric ergodicity} in a specific metric $d(\cdot,\cdot)$, and the finite-time error analysis is valid in the metric $d(\cdot,\cdot)$, then we can use the triangle inequality to estimate the long-time sampling error. For example, $\{X_t\}_{t\Ge0}$ can be the IPS \eqref{IPS}, the MVP \eqref{MVP} or the Hamiltonian Monte Carlo, where the geometric ergodicity of $\{X_t\}_{t\Ge0}$ is guaranteed by the reflection coupling \cite{reflect_2,reflect_3,reflect_HMC}.

Finally, we \blue{remark} that the triangle inequality framework is \blue{remotely} reminiscent of the well-known Lax equivalence theorem in numerical analysis. Here, the geometric ergodicity serves as the stability and it helps translate the finite-time error estimate to the long-time error estimate without sacrificing the accuracy order.
\subsection{Application in the interacting particle system}
A significant advantage of the triangle inequality framework is that it naturally applies to the IPS (not necessarily in the form of \eqref{IPS}). When sampling an IPS, we naturally expect the error bound to be independent of the number of particles $N$. This is in general a difficult problem in stochastic analysis, and even more in the case of the long-time sampling error. Nevertheless, the requirement of the uniform-in-$N$ error bound can be explicitly interpreted in the triangle inequality framework.

In order to make the long-time sampling error \eqref{prop2 long time error} independent of the number of particles $N$, we need to satisfy the following two conditions.
\begin{enumerate}
\setzero
\item The finite-time error bound $\ep(T_0)$ is independent of $N$ (for fixed $T_0$);
\item The exponential convergence rate $\beta$ is independent of $N$.
\end{enumerate}
The first condition is relatively easy to obtain because $\ep(T_0)$ only relates to the finite-time error analysis.
If the multi-particle system has a mean-field limit as $N\rightarrow\infty$, the theory of propagation of chaos usually provides a convenient tool to study $\ep(T_0)$, see \cite{mvpp_1,numerical_4,numerical_5} for example.

The second condition is more demanding because it requires the IPS to have uniform geometric ergodicity in a specific metric $d(\cdot,\cdot)$. The Harris ergodic theorem is not suitable to prove the uniform ergodicity because it is difficult to quantify the the minorization condition in high dimensions \cite{reflect_3}. The uniform log-Sobolev inequality proved in \cite{uniform_log_Sobolev} has a uniform-in-$N$ convergence rate, but the relative entropy used to quantify the convergence is not a metric. Therefore, the most natural choice for the metric $d(\cdot,\cdot)$ in the IPS is the Wasserstein distance, and the uniform geometric ergodicity can be verified by the reflection coupling \cite{reflect_1,reflect_2}.

When both conditions are satisfied, we can use the triangle inequality framework to estimate the long-time sampling error of a large variety of numerical methods, although the order of accuracy is not optimal. In particular, for the first time we show that the discrete RB--IPS \eqref{d-RB-IPS}, as a time-discretization of the Random Batch Method, possesses a long-time error bound independent of the number of particles $N$ and the time step $\tau$.
\section{Error analysis of discrete RB--IPS for IPS}
In this section we analyze the error of the discrete RB--IPS \eqref{d-RB-IPS}, as an approximation to the IPS \eqref{IPS}. In Section 3.1, we derive the strong error in the finite time. In Section 3.2, we prove the uniform-in-time moment estimates for the discrete RB--IPS \eqref{d-RB-IPS}, which is necessary for the long-time error estimate. In Section 3.3, we briefly review the geometric ergodicity of the IPS \eqref{IPS} derived by the reflection coupling.
In Section 3.4, we combine the results above with the triangle inequality framework to derive the long-time error in the normalized Wasserstein-1 distance.

For the convenience of analysis, we also introduce the continuous-time random batch interacting particle system (RB--IPS), which is represented by a system of particles $Y_t = \{Y_t^i\}_{i=1}^N$ in $\mathbb R^{Nd}$, where each particle $Y_t^i\in\mathbb R^{Nd}$ in the time interval $[t_n,t_{n+1})$ is evolved by the following SDE
\begin{equation}
	\d Y_t^i = \bigg(
	b(Y_t^i) + 
	\frac1{p-1}
	\sum_{j\neq i,j\in\C}
	K(Y_t^i - Y_t^j)\bigg)\d t + 
	\sigma\d W_t^i,~~
	i\in \C,~~t\in[t_n,t_{n+1}).
	\label{RB-IPS}
\end{equation}
Here, $\D = \{\C_1,\cdots,\C_q\}$ is the batch division used in the time interval $[t_n,t_{n+1})$; and for each $i\in\{1,\cdots,N\}$, $\C\in\D$ is the unique batch that contains $i$. The error analysis for the RB--IPS \eqref{RB-IPS} can be found in \cite{rbme_1,rbme_2}.

We also list in Table 1 the notations of all dynamics involved in this paper and their corresponding transition probabilities, invariant distributions and equation numbers.
\begin{center}
\begin{tabular}{c|cccc}
\toprule
dynamics & notation & transition prob. & invariant dist. & equation \\
\midrule
IPS & $X_t\in\mathbb R^{Nd}$ & $(p_t)_{t\Ge0}$ & $\pi\in\P(\mathbb R^{Nd})$ & \eqref{IPS} \\
MVP & $\bar X_t\in\mathbb R^d$ & $(\bar p_t)_{t\Ge0}$ & $\bar\pi\in\P(\mathbb R^d)$ & \eqref{MVP} \\
discrete IPS & $\tilde X_n\in\mathbb R^{Nd}$ & $(\tilde p_{n\tau})_{n\Ge0}$ & -- & \eqref{d-IPS} \\
RB--IPS & $Y_t\in\mathbb R^{Nd}$ & $(q_{n\tau})_{n\Ge0}$ & -- & \eqref{RB-IPS} \\
discrete RB--IPS & $\tilde Y_n\in\mathbb R^{Nd}$ & $(\tilde q_{n\tau})_{n\Ge0}$ & -- & \eqref{d-RB-IPS} \\
\bottomrule
\end{tabular}
\captionof{table}{Notations of IPS, MVP, discrete IPS, RB--IPS and discrete RB--IPS.}
\end{center}
Here, `--' in the invariant distribution column means that the existence of such distribution is not required in our analysis.
\subsection{Strong error in finite time}
The discrete RB--IPS \eqref{d-RB-IPS} deviates from the IPS \eqref{IPS} for two reasons: time discretization and random batch divisions at each time step. Therefore, it is natural to analyze the impact of these two factors separately. Among the four dynamics: IPS \eqref{IPS}, discrete IPS \eqref{d-IPS}, RB--IPS \eqref{RB-IPS} and discrete RB--IPS \eqref{d-RB-IPS}, we focus on the following two types of strong error estimates.
\begin{enumerate}
\setzero
\item \textbf{Time discretization.}
\begin{equation}
	\mbox{discrete RB--IPS vs RB--IPS:} \sup_{0\Le n\Le T/\tau}\bigg(
		\frac1N\sum_{i=1}^N
		\mathbb E|Y^i_{n\tau} - \tilde Y^i_n|^2\bigg), 
	\label{strong error td}
\end{equation}
\item \textbf{Random batch divisions.}
\begin{equation}
	\mbox{RB--IPS vs IPS:}  \sup_{0\Le n\Le T/\tau}\bigg(
			\frac1N\sum_{i=1}^N
			\mathbb E|X^i_{n\tau} - Y^i_{n\tau}|^2\bigg).
	\label{strong error rb}
\end{equation}
\end{enumerate}
Here, we assume the four dynamics \eqref{IPS}\eqref{d-IPS}\eqref{RB-IPS}\eqref{d-RB-IPS} are in the synchronous coupling, i.e., they are driven by the same Wiener processes $\{W_t^i\}_{i=1}^N$, the same random batch divisions (if required) at each time step, and the same initial value $X_0$, where $X_0$ is a random variable on $\mathbb R^{Nd}$ with $\Law(X_0) = \nu$. Note that the discrete RB--IPS \eqref{d-RB-IPS} deviates from the RB--IPS \eqref{RB-IPS} only due to time-discretization because we impose the same random batch divisions for these two dynamics.

Once we obtain the strong error estimates \eqref{strong error td}\eqref{strong error rb}, the strong error between the discrete RB--IPS \eqref{d-RB-IPS} and the IPS \eqref{IPS} defined by
\begin{equation}
	\mbox{discrete RB--IPS vs IPS:}
	\sup_{0\Le n\Le T/\tau}\bigg(
	\frac1N\sum_{i=1}^N
	\mathbb E|X^i_{n\tau} - \tilde Y^i_n|^2\bigg).
	\label{strong error}
\end{equation}
directly follows from the triangle inequality. In the following we estimate \eqref{strong error td}\eqref{strong error rb} respectively.
\subsubsection*{Strong error due to time discretization}
Before we begin to estimate \eqref{strong error td}, it is convenient to introduce the strong error below
\begin{equation}
	\mbox{discrete IPS vs IPS:}  \sup_{0\Le n\Le T/\tau}\bigg(
			\frac1N\sum_{i=1}^N
			\mathbb E|X^i_{n\tau} - \tilde X^i_n|^2\bigg).
		\label{strong error td0}
\end{equation}
Since the both \eqref{strong error td}\eqref{strong error td0} origin from time discretization, we may apply similar methods to estimate \eqref{strong error td}\eqref{strong error td0}.
As in the standard routine in the strong error analysis, we impose the global Lipschitz and boundedness condition on the drift force $b$ and the interaction force $K$ as follows.
\begin{assumption}
\label{assumption:bounded}
For the drift force $b:\mathbb R^d\rightarrow\mathbb R^d$, there exists a constant $L_0$ such that
\begin{equation}
	|b(x)| \Le L_0(|x|+1),~~~
	|\nabla b(x)|\Le L_0,~~~
	\forall x\in\mathbb R^d.
	\label{boundedness 0}
\end{equation}
For the interaction force $K:\mathbb R^d\rightarrow\mathbb R^d$, there exists a constant $L_1$ such that
\begin{equation}
	\max\{|K(x)|,|\nabla K(x)|,|\nabla^2 K(x)|\} \Le L_1,~~~~\forall x\in\mathbb R^d.
	\label{boundedness 1}
\end{equation}
\end{assumption}

In the IPS \eqref{IPS}, define the perturbation force of the $i$-th particle by
\begin{equation}
	\gamma^i(x):=\frac1{N-1}\sum_{j\neq i}
	K(x^i-x^j),~~~~
	\forall x\in\mathbb R^{Nd},
	\label{IPS gamma}
\end{equation}
and the total force applied to the $i$-th particle by $b^i(x) = b(x^i) + \gamma^i(x)$.
Then the IPS \eqref{IPS} and the discrete IPS \eqref{d-IPS} can be simply written as
\begin{equation}
	\d X_t^i = b^i(X_t)\d t + \sigma\d W_t^i,~~
	\tilde X_{n+1}^i = \tilde X_n^i + b^i(\tilde X_n)\tau + 
	\sigma (W_{t_{n+1}}^i-W_{t_n}^i),~~i=1,\cdots,N.
	\label{IPS XX}
\end{equation}
According to \eqref{boundedness 1},
it is easy to verify $\gamma^i(x)$ is uniformly bounded by $L_1$, and
\begin{equation}
	|\gamma^i(x) - \gamma^i(y)|\Le L_1|x^i-y^i| + 
	\frac{L_1}{N-1}\sum_{j\neq i} |x^j-y^j|.
\end{equation}
Summation over $i$ yields the global Lipschitz condition for the perturbation force
\begin{equation}
	\sum_{i=1}^N |\gamma^i(x) - \gamma^i(y)| \Le 2L_1
	\sum_{i=1}^N |x^i-y^i|,~~~~
	\forall x,y\in\mathbb R^{Nd}.
\end{equation}

In the RB--IPS \eqref{RB-IPS}, suppose
the index set $\{1,\cdots,N\}$ is divided to $\D = \{\C_1,\cdots,\C_q\}$ to form the random batch dynamics in the time interval $[t_n,t_{n+1})$. 
In this case we slightly abuse the notation and again define the perturbation force by
\begin{equation}
	\gamma^i(x) = \frac1{p-1}\sum_{j\neq i,j\in\C}
	K(x^i-x^j),~~~~
	\forall x\in\mathbb R^{Nd},
\end{equation}
then with the new total force $b^i(x) = b(x^i) + \gamma^i(x)$, the RB--IPS \eqref{RB-IPS} and the discrete RB--IPS \eqref{d-RB-IPS} are simply given by
\begin{equation}
	\d Y_t^i = b^i(Y_t)\d t + \sigma\d W_t^i,~~
	\tilde Y_{n+1}^i = \tilde Y_n^i + b^i(\tilde Y_n)\tau + 
	\sigma (W_{t_{n+1}}^i-W_{t_n}^i),~~i=1,\cdots,N.
	\label{RB-IPS YY}
\end{equation}
Although \eqref{RB-IPS YY} is very similar to \eqref{IPS XX}, we stress that \eqref{RB-IPS YY} is valid only in the time step $[t_n,t_{n+1})$ due to the random batch divisions, and the formulation of $\gamma^i(x)$ varies in different time steps. Nevertheless, $\gamma^i(x)$ is uniformly bounded by $L_1$ regardless of the batch division $\D$. Also, we have
\begin{equation}
	|\gamma^i(x) - \gamma^i(y)| \Le L_1|x^i-y^i| + 
	\frac{L_1}{p-1}\sum_{j\neq i,j\in\C}|x^j-y^j|.
\end{equation}
Summation over $i\in\C$ gives
\begin{equation}
	\sum_{i\in\C}|\gamma^i(x) - \gamma^i(y)|
	\Le 2L_1\sum_{i\in\C}|x^i-y^i|,
\end{equation}
and summation over $\C\in\D$ gives
\begin{equation}
	\sum_{i=1}^N |\gamma^i(x) - \gamma^i(y)| \Le 2L_1\sum_{i=1}^N |x^i-y^i|.
\end{equation}
Therefore, the global Lipschitz condition still holds true for the random batch dynamics.

Based on the observation of $\gamma^i(x)$ above, we can prove the following results.
\begin{lemma}
\label{lemma:IPS moment}
Under Assumption \ref{assumption:bounded},
if there exists a constant $M_2$ such that
$$
	\max_{1\Le i\Le N}\mathbb E|X_0^i|^2\Le M_2,
$$
then there exists a constant $C=C(L_0,L_1,M_2,T,\sigma)$ such that
\begin{equation}
	\sup_{0\Le t\Le T}
	\mathbb E|X_t^i|^2 \Le C,~
	\sup_{t\in[t_n,t_{n+1}\wedge T)}\mathbb E
	|X_t^i - X_{t_n}^i|^2 \Le C\tau,
	\label{IPS moment finite time}
\end{equation}
and
\begin{equation}
	\sup_{0\Le t\Le T}
	\mathbb E|Y_t^i|^2 \Le C,~
	\sup_{t\in[t_n,t_{n+1}\wedge T)}\mathbb E
	|Y_t^i - Y_{t_n}^i|^2 \Le C\tau.
\end{equation}
\end{lemma}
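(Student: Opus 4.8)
The plan is to establish the four bounds in Lemma~\ref{lemma:IPS moment} by standard moment estimates for SDEs, exploiting the linear growth of $b$ and the uniform boundedness of the perturbation force $\gamma^i$. I will treat the IPS \eqref{IPS XX} in detail; the argument for the RB--IPS \eqref{RB-IPS YY} is identical once one notes that $\gamma^i(x)$ is uniformly bounded by $L_1$ regardless of the batch division $\D$, and that \eqref{RB-IPS YY} is a genuine SDE on each subinterval $[t_n,t_{n+1})$, so the same Gr\"onwall argument applies on each step and can be chained across steps.

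First I would derive the uniform second-moment bound $\sup_{0\Le t\Le T}\E|X_t^i|^2\Le C$. Applying It\^o's formula to $|X_t^i|^2$, or equivalently writing the mild/integral form
\begin{equation*}
	X_t^i = X_0^i + \int_0^t b^i(X_s)\,\d s + \sigma W_t^i,
\end{equation*}
and using $b^i(x) = b(x^i) + \gamma^i(x)$ with $|b(x^i)|\Le L_0(|x^i|+1)$ from \eqref{boundedness 0} and $|\gamma^i(x)|\Le L_1$ from \eqref{boundedness 1}, one gets
\begin{equation*}
	\E|X_t^i|^2 \Le C\Big(M_2 + 1 + \sigma^2 t + \int_0^t \E|X_s^i|^2\,\d s\Big),
\end{equation*}
after Cauchy--Schwarz on the time integral and the It\^o isometry on the martingale term. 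Gr\"onwall's inequality then yields the bound with $C=C(L_0,L_1,M_2,T,\sigma)$, crucially uniform in $N$ and in $i$ because $\gamma^i$ is bounded by the fixed constant $L_1$ and $b$ acts only on the single component $x^i$.

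For the one-step increment bound, I would write
\begin{equation*}
	X_t^i - X_{t_n}^i = \int_{t_n}^t b^i(X_s)\,\d s + \sigma(W_t^i - W_{t_n}^i),~~ t\in[t_n,t_{n+1}\wedge T),
\end{equation*}
square, take expectations, and bound the drift term by $(t-t_n)\int_{t_n}^t \E|b^i(X_s)|^2\,\d s \Le C\tau^2(1+\sup_{0\Le s\Le T}\E|X_s^i|^2)$ using the uniform moment bound just proved, while the Brownian term contributes $\sigma^2 d (t-t_n)\Le \sigma^2 d\,\tau$. Since $\tau^2\Le T\tau$ on a finite interval, both terms are $O(\tau)$, giving the claimed $C\tau$ bound. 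The RB--IPS estimates follow verbatim, with the only subtlety being that one must run this argument independently on each batch-interval $[t_n,t_{n+1})$ and then glue: the uniform-moment Gr\"onwall constant produced on $[t_n,t_{n+1})$ depends only on $\E|Y_{t_n}^i|^2$ and $\tau$, so iterating over $n\Le T/\tau$ and using that the per-step growth factor is $e^{C\tau}$ keeps everything controlled by a constant depending on $T$ but not on $N$, $\tau$ or $p$. I expect the only real bookkeeping obstacle to be making the constants visibly uniform in $N$ and $p$ throughout the chaining over time steps for the random-batch dynamics; the core SDE estimates are routine.
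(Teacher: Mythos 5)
Your proposal is correct and follows essentially the same route as the paper: Itô/integral form plus linear growth of $b$ and the uniform bound $|\gamma^i|\Le L_1$, Gr\"onwall for the uniform second moment, and the squared one-step integral representation for the $C\tau$ increment bound. The only cosmetic difference is your step-by-step gluing for the RB--IPS; the paper simply runs the same global Gr\"onwall argument on $[0,T]$, since the only property of $\gamma^i$ used is the bound $L_1$, which holds on every subinterval regardless of the batch division.
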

The proof of Lemma \ref{lemma:IPS moment} is in Appendix.
The proof only requires the fact that $|\gamma^i(x)|  \Le L_1$.
\begin{theorem}
\label{theorem:IPS strong}
Under Assumption \ref{assumption:bounded}, if there exists a constant $M_2$ such that
$$
	\max_{1\Le i\Le N}\mathbb E|X_0^i|^2\Le M_2,
$$
then there exists a constant $C=C(L_0,L_1,M_2,T,\sigma)$ such that
\begin{equation}
	\sup_{0\Le n\Le T/\tau}\bigg(
	\frac1N\sum_{i=1}^N
	\mathbb E|X^i_{n\tau} - \tilde X^i_n|^2\bigg) \Le C\tau
\end{equation}
and
\begin{equation}
\sup_{0\Le n\Le T/\tau}\bigg(
	\frac1N\sum_{i=1}^N
	\mathbb E|Y^i_{n\tau} - \tilde Y^i_n|^2\bigg) \Le C\tau.
\end{equation}
\end{theorem}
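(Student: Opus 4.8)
The proof is the classical strong error analysis for the Euler--Maruyama scheme; the only nontrivial point is that every constant must be kept independent of $N$ and $p$. I would handle the two estimates simultaneously, since both fit the template $\d Z_t^i = b^i(Z_t)\,\d t + \sigma\,\d W_t^i$ and $\tilde Z_{n+1}^i = \tilde Z_n^i + b^i(\tilde Z_n)\tau + \sigma(W_{t_{n+1}}^i - W_{t_n}^i)$, where $(Z,\tilde Z)$ denotes $(X,\tilde X)$ with the IPS perturbation force, or $(Y,\tilde Y)$ with the batch perturbation force frozen on $[t_n,t_{n+1})$. The first step is to upgrade the one-step Lipschitz bound on $\gamma^i$ to a summed $L^2$ bound: squaring $|\gamma^i(x)-\gamma^i(y)|\Le L_1|x^i-y^i| + \frac{L_1}{N-1}\sum_{j\ne i}|x^j-y^j|$, applying Cauchy--Schwarz to the averaged sum, and summing over $i$ gives $\sum_i|\gamma^i(x)-\gamma^i(y)|^2 \Le 4L_1^2\sum_i|x^i-y^i|^2$ with a constant independent of $N$; the batch version follows identically by summing over $\C\in\D$. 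Combined with the $L_0$-Lipschitz property of $b$, this yields $\sum_i|b^i(x)-b^i(y)|^2\Le C\sum_i|x^i-y^i|^2$ for the total force of either dynamics, uniformly in $N,p$.

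Next, setting $e_n^i := Z_{n\tau}^i - \tilde Z_n^i$ and using that the two dynamics are driven by the same Wiener increments on each step, the noise cancels and
\[ e_{n+1}^i = e_n^i + \int_{t_n}^{t_{n+1}}\!\big(b^i(Z_s)-b^i(Z_{t_n})\big)\,\d s + \tau\big(b^i(Z_{t_n})-b^i(\tilde Z_n)\big). \]
I would expand $|e_{n+1}^i|^2$, bound the cross terms by Young's inequality (with a weight on the consistency term matched to its order), take expectations, and sum over $i$. The consistency term is controlled by the summed Lipschitz bound and Cauchy--Schwarz in time, together with the one-step moment estimate $\E|Z_s^i - Z_{t_n}^i|^2\Le C\tau$ from Lemma \ref{lemma:IPS moment}, so its total contribution is $O(\tau^2)$; the remaining term is controlled by the summed Lipschitz bound as $C\tau\, E_n$, where $E_n := \frac1N\sum_i\E|e_n^i|^2$. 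This produces the recursion $E_{n+1}\Le(1+C\tau)E_n + C\tau^2$ with $E_0=0$, and the discrete Grönwall inequality together with $n\tau\Le T$ gives $E_n\Le C\tau(e^{CT}-1)=:C(T)\tau$. Applying this once with $(Z,\tilde Z)=(X,\tilde X)$ and once with $(Z,\tilde Z)=(Y,\tilde Y)$ gives the two asserted bounds, with the constant depending only on $L_0,L_1,M_2,T,\sigma$.

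The main obstacle---really the only place where care is needed---is the uniformity in $N$ and $p$: it rests entirely on using \emph{summed} Lipschitz and moment estimates, so that the normalization $\frac1N\sum_i$ never leaves a residual power of $N$, whereas a naive coordinatewise estimate would. The random batches add nothing to the finite-time difficulty: the coupling is synchronous on each step, the batch-averaged force obeys exactly the same bounds as the full mean-field force, and the batch randomness is absorbed simply by taking expectations, with no need to condition on the division $\D$ at this stage.
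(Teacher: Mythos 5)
Your proposal is correct and follows essentially the same route as the paper's proof: the same error recursion for $e_n^i$, the same summed-over-$i$ Lipschitz bound on $\gamma^i$ (uniform in $N$ and $p$), the one-step moment estimate of Lemma \ref{lemma:IPS moment} to control the consistency term, and the discrete Gr\"onwall inequality, with the batch case handled by the identical argument after summing over batches. The only cosmetic difference is that you split the integrand into a consistency part plus a propagation part before squaring, whereas the paper bounds $|X_t^i-\tilde X_n^i|^2\Le 2|X_t^i-X_{t_n}^i|^2+2|e_n^i|^2$ after applying the Lipschitz bounds; these are equivalent.
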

The proof of Theorem \ref{theorem:IPS strong} is in Appendix.
The proof uses the fact that $\gamma^i(x)$ is global Lipschitz.
\begin{remark}We have some remarks on Theorem \ref{theorem:IPS strong}.
\begin{enumerate}
\item
If one employs a constant time step $\tau$,
the global Lipschitz condition on the drift force $b$ is necessary to ensure the stability of the numerical method. Even for an ergodic SDE, the Euler--Maruyama scheme can be unstable when $b$ is not globally Lipschitz, see the example in Section 6.3 of \cite{tri_2}. If there is only local Lipschitz condition on $b$, the readers may refer to \cite{numerical_3,numerical_5} for the discussion of other types of Euler--Maruyama schemes.
\item The constant $C$ depends on the second moments of the initial distribution $\nu\in\P(\mathbb R^{Nd})$, which is characterized by the constant $M_2$ in Theorem \ref{theorem:IPS strong}.
\end{enumerate}
\end{remark}
\subsubsection*{Strong error due to random batch divisions}
We compare the trajectory difference between the IPS \eqref{IPS} and the RB--IPS \eqref{RB-IPS}, which are both exactly integrated in the time interval $[t_n,t_{n+1})$. Recall that the strong error in this case is 
$$
	\sup_{0\Le n\Le T/\tau}\bigg(
	\frac1N\sum_{i=1}^N
	\mathbb E|X^i_{n\tau} - Y^i_{n\tau}|^2\bigg),
$$
where \eqref{IPS}\eqref{RB-IPS} are driven by the same Wiener processes $\{W_t^i\}_{i=1}^N$ and the same initial random variable $X_0\sim\nu$. The estimate of the strong error above directly follows Theorem 3.1 in \cite{rbme_1}, and we restate their result here.
\begin{theorem}
\label{theorem:RBM strong error}
Under Assumption \ref{assumption:bounded}, if there exists a constant $M_4$ such that
$$
	\max_{1\Le i\Le N}\mathbb E|X_0^i|^4\Le M_4,
$$
then there exists a constant $C=C(L_0,L_1,M_4,T,\sigma)$ such that
\begin{equation}
	\sup_{0\Le t\Le T}\bigg(
	\frac1N\sum_{i=1}^N
	\mathbb E|X^i_t - Y_t^i|^2\bigg) \Le C\bigg(\frac{\tau}{p-1}+\tau^2\bigg).
\end{equation}
\end{theorem}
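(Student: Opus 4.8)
The plan is to follow the argument of Theorem~3.1 of \cite{rbme_1}, whose heart is the observation that random batching perturbs the drift, at each step, by a term that is \emph{unbiased} and has conditional variance $O(1/(p-1))$. Since the IPS \eqref{IPS} and the RB--IPS \eqref{RB-IPS} share the same Wiener processes and start from the same $X_0$, the Brownian parts cancel in the error $Z_t^i:=X_t^i-Y_t^i$, which on each sub-interval $[t_n,t_{n+1})$ with batch division $\D=\{\C_1,\dots,\C_q\}$ solves the random ODE
\begin{equation*}
	\frac{\d}{\d t}Z_t^i=\big(b(X_t^i)-b(Y_t^i)\big)+\big(\Gamma^i(X_t)-\Gamma^i(Y_t)\big)-\chi_t^i,\qquad \Gamma^i(x):=\frac1{N-1}\sum_{j\neq i}K(x^i-x^j),
\end{equation*}
where
\begin{equation*}
	\chi_t^i:=\frac1{p-1}\sum_{j\neq i,\,j\in\C}K(Y_t^i-Y_t^j)-\Gamma^i(Y_t)
\end{equation*}
is the fluctuation introduced by replacing the full interaction by the batch interaction. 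The first two terms are controlled by the Lipschitz bound on $b$ and the global Lipschitz estimate $\sum_i|\Gamma^i(x)-\Gamma^i(y)|\Le 2L_1\sum_i|x^i-y^i|$ proved above for the IPS perturbation force, so the whole difficulty is to show that $\chi_t^i$ contributes only at order $O\big(\tfrac{\tau}{p-1}+\tau^2\big)$.

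I would differentiate $\frac1N\sum_i\mathbb E|Z_t^i|^2$ on each sub-interval and integrate over $[t_n,t_{n+1})$. The Lipschitz terms yield $C\cdot\frac1N\sum_i\mathbb E|Z_t^i|^2$, to be absorbed by Gronwall, while the batch term produces $-\frac2N\sum_i\mathbb E[Z_t^i\cdot\chi_t^i]$. The crucial point is that $\chi_{t_n}^i$ is essentially a martingale increment: conditioned on $\mathcal F_{t_n}$ (which fixes $Y_{t_n}$ but not the fresh batch drawn for $[t_n,t_{n+1})$), the batch containing $i$ is a uniformly random $p$-subset, so
\begin{equation*}
	\mathbb E[\chi_{t_n}^i\mid\mathcal F_{t_n}]=0,\qquad \mathbb E[|\chi_{t_n}^i|^2\mid\mathcal F_{t_n}]\Le\frac{4L_1^2}{p-1},
\end{equation*}
the second bound being the sampling-without-replacement variance estimate together with $|K|\Le L_1$. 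To exploit this I would split $\chi_t^i=\chi_{t_n}^i+(\chi_t^i-\chi_{t_n}^i)$ and $Z_t^i=Z_{t_n}^i+(Z_t^i-Z_{t_n}^i)$ on $[t_n,t_{n+1})$, so that $\mathbb E[Z_t^i\cdot\chi_t^i]$ breaks into four pieces; the piece $\mathbb E[Z_{t_n}^i\cdot\chi_{t_n}^i]$ vanishes because $Z_{t_n}^i$ is $\mathcal F_{t_n}$-measurable.

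For the piece $\mathbb E[(Z_t^i-Z_{t_n}^i)\cdot\chi_{t_n}^i]$ I would substitute $Z_t^i-Z_{t_n}^i=\int_{t_n}^t\dot Z_s^i\,\d s$ from the ODE above; its dominant contribution is the diagonal term $-\int_{t_n}^t\mathbb E[\chi_s^i\cdot\chi_{t_n}^i]\,\d s$, whose leading part $-(t-t_n)\mathbb E|\chi_{t_n}^i|^2=O\big(\tfrac{t-t_n}{p-1}\big)$ produces $O\big(\tfrac{\tau^2}{p-1}\big)$ per step and hence $O\big(\tfrac{\tau}{p-1}\big)$ on $[0,T]$ — this is where the $\tau/(p-1)$ term originates — while the non-diagonal contributions (involving the Lipschitz drift terms) are handled by Cauchy--Schwarz and yield $O(\tau)\cdot\frac1N\sum_i\mathbb E|Z_t^i|^2$ plus further $O(\tau^2/(p-1))$ per step. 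The piece $\mathbb E[(Z_t^i-Z_{t_n}^i)\cdot(\chi_t^i-\chi_{t_n}^i)]$ is dispatched by Cauchy--Schwarz using the one-step increment bounds $\mathbb E|Y_t^i-Y_{t_n}^i|^2\Le C\tau$ from Lemma~\ref{lemma:IPS moment} — together with the analogous fourth-moment bound $\mathbb E|Y_t^i-Y_{t_n}^i|^4\Le C\tau^2$, for which the hypothesis $\mathbb E|X_0^i|^4\Le M_4$ is needed — and the resulting Lipschitz estimate $\norm{\chi_t^i-\chi_{t_n}^i}_{L^2}\Le C\sqrt{t-t_n}$, producing $O(\tau^2)$ errors, mixed errors of order $\tau^{3/2}/\sqrt{p-1}$ that are $\Le\tfrac12\big(\tfrac{\tau}{p-1}+\tau^2\big)$ by the arithmetic--geometric mean inequality, and further Gronwall-absorbable terms. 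Collecting everything yields a recursion of the form
\begin{equation*}
	\frac1N\sum_{i=1}^N\mathbb E|Z_{t_{n+1}}^i|^2\Le(1+C\tau)\frac1N\sum_{i=1}^N\mathbb E|Z_{t_n}^i|^2+C\Big(\frac{\tau^2}{p-1}+\tau^3\Big),
\end{equation*}
and the discrete Gronwall inequality over $n\Le T/\tau$ with $Z_0=0$ gives $\sup_{0\Le n\Le T/\tau}\frac1N\sum_i\mathbb E|Z_{t_n}^i|^2\Le C(T)\big(\tfrac{\tau}{p-1}+\tau^2\big)$, which extends to all $t\in[0,T]$ by the same increment bounds.

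The step I expect to be the main obstacle is the fourth piece, $\mathbb E[Z_{t_n}^i\cdot(\chi_t^i-\chi_{t_n}^i)]$, which does \emph{not} vanish: one must expand $K(Y_t^i-Y_t^j)-K(Y_{t_n}^i-Y_{t_n}^j)$ by It\^o's formula — using the boundedness of $\nabla K$ and $\nabla^2 K$ in Assumption~\ref{assumption:bounded} — to show that its conditional expectation given $\mathcal F_{t_n}$ is $O(t-t_n)$ and that the dependence on the fresh batch enters only at sub-leading order, so that this piece is $O\big(\tau\,(t-t_n)\big)$ and contributes only an $O(\tau^2)$ error after summation. A secondary but pervasive concern is keeping every constant independent of $N$ and $p$; this is ensured by always normalizing the particle sums by $1/N$ and by invoking the $N$-uniform moment bounds of Lemma~\ref{lemma:IPS moment}, which use only $|\gamma^i(x)|\Le L_1$ and hence hold regardless of the batch structure. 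Since the statement coincides with Theorem~3.1 of \cite{rbme_1}, the full computation can be cited from there.
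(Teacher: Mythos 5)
The paper does not prove this theorem itself: it restates Theorem 3.1 of \cite{rbme_1} verbatim, remarking only that the second-order Taylor expansion of $K(Y_t^i-Y_t^j)-K(Y_{t_n}^i-Y_{t_n}^j)$ is what forces the fourth-moment hypothesis. Your sketch is a faithful reconstruction of that cited argument --- the unbiased batch fluctuation with conditional variance $O(1/(p-1))$, the four-way splitting of $\mathbb E[Z_t^i\cdot\chi_t^i]$ around the grid point $t_n$ with the $\mathbb E[Z_{t_n}^i\cdot\chi_{t_n}^i]$ term vanishing, and the identification of where $\nabla^2K$ and the fourth moments enter all match the original proof --- so it takes essentially the same route and is correct.
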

\begin{remark}
We have some remarks on Theorem \ref{theorem:RBM strong error}:
\begin{enumerate}
\item Compared to Theorem \ref{theorem:IPS strong}, Theorem \ref{theorem:RBM strong error} requires the initial distribution $\nu$ has finite fourth-order moments rather than second-order moments.
This is because in the original proof in \cite{rbme_1}, the authors used the second-order Taylor expansion to to estimate the $L^2$ norm of
$$
	K(Y_t^i - Y_t^j) - K(Y_{n\tau}^i - Y_{n\tau}^i),
$$
which naturally produces the fourth order moments.
\item If the linear growth condition of $b(x)$ in \eqref{boundedness 0} is replaced by $|b(x)| \Le L_0(|x|+1)^q$ for some $q\Ge2$, then the initial distribution $\nu$ should have finite $2q$-th order moments. In this paper we only consider the case of $q=2$.
\end{enumerate}
\end{remark}

Using Theorems \ref{theorem:IPS strong} and \ref{theorem:RBM strong error}, we can now estimate the strong error \eqref{strong error} between the discrete RB--IPS \eqref{d-RB-IPS} and the IPS \eqref{IPS}.
\begin{theorem}
\label{theorem:strong error}
Under Assumption \ref{assumption:bounded}, if there exists a constant $M_4$ such that
$$
	\max_{1\Le i\Le N}\mathbb E|X_0^i|^4\Le M_4,
$$
then there exists a constant $C=C(L_0,L_1,M_4,T,\sigma)$ such that
\begin{equation}
	\sup_{0\Le n\Le T/\tau}\bigg(
	\frac1N\sum_{i=1}^N
	\mathbb E|X^i_{n\tau} - \tilde Y_n^i|^2\bigg) \Le C\tau.
\end{equation}
\end{theorem}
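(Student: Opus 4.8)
The plan is to obtain Theorem \ref{theorem:strong error} as an immediate consequence of the two strong-error estimates already established: Theorem \ref{theorem:IPS strong} (the time-discretization error between the RB--IPS \eqref{RB-IPS} and the discrete RB--IPS \eqref{d-RB-IPS}) and Theorem \ref{theorem:RBM strong error} (the random-batch error between the IPS \eqref{IPS} and the RB--IPS \eqref{RB-IPS}), linked through a triangle inequality in $L^2$. The RB--IPS $Y_t$ plays the role of an intermediate trajectory that interpolates between the IPS and the discrete RB--IPS: it differs from $X_t$ only because of the random batch divisions, and from $\tilde Y_n$ only because of time discretization.

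First I would fix the synchronous coupling of all the dynamics involved, exactly as in the hypotheses of the two cited theorems: the IPS \eqref{IPS}, the RB--IPS \eqref{RB-IPS} and the discrete RB--IPS \eqref{d-RB-IPS} are driven by the same Wiener processes $\{W_t^i\}_{i=1}^N$, the same initial value $X_0\sim\nu$ with $\max_i\E|X_0^i|^4\Le M_4$, and---for the latter two---the same random batch division at every time step. This common coupling is essential, because $Y_t$ must serve simultaneously as the comparison process in both inputs.

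Next, for each index $i$ and each $n$ with $0\Le n\Le T/\tau$, apply $|a-b|^2\Le 2|a-c|^2+2|c-b|^2$ with $a=X^i_{n\tau}$, $b=\tilde Y^i_n$, $c=Y^i_{n\tau}$, take expectations, average over $i$, and take the supremum over $n$, giving
\[
\sup_{0\Le n\Le T/\tau}\frac1N\sum_{i=1}^N \E|X^i_{n\tau}-\tilde Y^i_n|^2
\Le 2\sup_{0\Le n\Le T/\tau}\frac1N\sum_{i=1}^N \E|X^i_{n\tau}-Y^i_{n\tau}|^2
+2\sup_{0\Le n\Le T/\tau}\frac1N\sum_{i=1}^N \E|Y^i_{n\tau}-\tilde Y^i_n|^2 .
\]
By Theorem \ref{theorem:RBM strong error} the first term on the right is bounded by $C\big(\tfrac{\tau}{p-1}+\tau^2\big)$, and by the second conclusion of Theorem \ref{theorem:IPS strong} the second term is bounded by $C\tau$, with both constants depending only on $L_0,L_1,M_4,T,\sigma$ (the $M_4$ hypothesis subsumes the $M_2$ hypothesis needed for Theorem \ref{theorem:IPS strong}). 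Since $p\Ge2$ we have $\tfrac1{p-1}\Le1$, and in the regime of interest $\tau\Le1$ so $\tau^2\Le\tau$; hence the right-hand side is $\Le C\tau$ for a new constant $C=C(L_0,L_1,M_4,T,\sigma)$ independent of $N,\tau,p$, which is the claim.

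There is essentially no genuine obstacle left at this point: the analytic content---the uniform-in-$N$ moment bounds of Lemma \ref{lemma:IPS moment}, the Gr\"onwall-type estimate behind Theorem \ref{theorem:IPS strong}, and the Taylor-expansion control of the random-batch fluctuations behind Theorem \ref{theorem:RBM strong error}---has all been absorbed into the two inputs. The only point requiring care is bookkeeping: checking that the constants in both inputs are truly independent of $N$ and $p$, so that the triangle-inequality combination preserves this independence, and that the two estimates are genuinely compatible, i.e.\ they are stated for the same intermediate RB--IPS trajectory under the same coupling.
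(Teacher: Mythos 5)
Your proposal is correct and is essentially the paper's own argument: the paper obtains Theorem \ref{theorem:strong error} exactly by coupling the IPS, the RB--IPS and the discrete RB--IPS synchronously, inserting the RB--IPS trajectory $Y_{n\tau}$ as intermediate, and combining Theorem \ref{theorem:RBM strong error} with the second bound of Theorem \ref{theorem:IPS strong} via the triangle inequality (noting $M_4$ implies the $M_2$ hypothesis and $\frac{\tau}{p-1}+\tau^2 \Le C\tau$). No gap; the bookkeeping on the constants being independent of $N,\tau,p$ is exactly the point the paper also emphasizes.
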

Theorem \ref{theorem:strong error} \red{implies} that applying random batch divisions does not \red{worsen} the order of strong error. However, the constant $C$ in Theorem \ref{theorem:strong error} can be much larger than in Theorem \ref{theorem:IPS strong}.

A direct consequence of Theorem \ref{theorem:strong error} is the finite-time error estimate of the discrete RB--IPS \eqref{d-RB-IPS} in the Wasserstein-2 distance. For given distributions $\mu,\nu\in \P(\mathbb R^{Nd})$, the normalized Wasserstein-$p$ distance between $\mu,\nu$ is defined by
\begin{equation}
	\W_p^p(\mu,\nu) = 
	\inf_{\gamma\in\Pi(\mu,\nu)}
	\int_{\mathbb R^{Nd}\times\mathbb R^{Nd}}
	\bigg(
	\frac1N\sum_{i=1}^N |x^i-y^i|^p
	\bigg)
	\gamma(\d x\d y),
	\label{Wasserstein}
\end{equation}
where $\Pi(\mu,\nu)$ is the transport plans between $\mu$ and $\nu$. The readers may also refer to \cite{chaos_3} for a thorough introduction to the normalized Wasserstein distance.

Let $(p_t)_{t\Ge0},(\tilde p_{n\tau})_{n\Ge0},(q_{n\tau})_{n\Ge0},(\tilde q_{n\tau})_{n\Ge0}$ be the transition probabilities of the four dynamics \eqref{IPS}\eqref{d-IPS}\eqref{RB-IPS}\eqref{d-RB-IPS} respectively. Then for given initial distribution $\nu\in\P(\mathbb R^{Nd})$, the distribution laws of $X_{n\tau}^i,\tilde X_n^i,Y_{n\tau}^i,\tilde Y_n^i$ are $\nu p_{n\tau},\nu \tilde p_{n\tau},\nu q_{n\tau},\nu \tilde q_{n\tau}$ respectively (see the notations in Table 1).
Here we note that $(p_t)_{t\Ge0}$ defines a Markov process, while $(\tilde p_{n\tau})_{n\Ge0},(q_{n\tau})_{n\Ge0},(\tilde q_{n\tau})_{n\Ge0}$ only define discrete-time Markov chains because of the random batch divisions at each time step. Although formally the transition probability $(q_t)_{t\Ge0}$ for the RB--IPS \eqref{RB-IPS} can be defined for any $t\Ge0$, $(q_t)_{t\Ge0}$ does not form a Markov semigroup.

Now we have the $\W_2$ error estimate for the discrete IPS \eqref{d-IPS} and the discrete RB--IPS \eqref{d-RB-IPS}.
\begin{corollary}
\label{corollary:IPS finite error}
Under Assumption \ref{assumption:bounded}, if there exists a constant $M_4$ such that
$$
	\max_{1\Le i\Le N}\int_{\mathbb R^{Nd}}
	|x^i|^4\nu(\d x)\Le M_4,
$$
then there exists a constant $C=C(L_0,L_1,M_4,T,\sigma)$ such that
\begin{equation}
\max\bigg\{
\sup_{0\Le n\Le T/\tau}
\W_2(\nu p_{n\tau},\nu \tilde p_{n\tau}),
\sup_{0\Le n\Le T/\tau}
	\W_2(\nu p_{n\tau},\nu \tilde q_{n\tau}) 
\bigg\}	
\Le C\sqrt{\tau}.
\label{IPS:nu pq}
\end{equation}
\end{corollary}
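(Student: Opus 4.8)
The plan is to read off the corollary from the strong error estimates of Theorems \ref{theorem:IPS strong} and \ref{theorem:strong error} by exploiting the variational characterization \eqref{Wasserstein} of the normalized Wasserstein-2 distance: since $\W_2^2(\mu,\nu)$ is an infimum over transport plans, any single coupling of the two dynamics furnishes an upper bound. The coupling of choice is the synchronous coupling already set up in Section 3.1, in which the IPS \eqref{IPS}, the discrete IPS \eqref{d-IPS} and the discrete RB--IPS \eqref{d-RB-IPS} are all built on a common probability space, driven by the same initial value $X_0\sim\nu$, the same Wiener processes $\{W_t^i\}_{i=1}^N$, and—for \eqref{d-RB-IPS}—its own sequence of random batch divisions drawn at each step.

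For the first term, I would couple \eqref{IPS} with \eqref{d-IPS} synchronously; then the joint law of $(X_{n\tau},\tilde X_n)$ has marginals $\nu p_{n\tau}$ and $\nu\tilde p_{n\tau}$, so it is an admissible transport plan and \eqref{Wasserstein} gives
\[
\W_2^2(\nu p_{n\tau},\nu\tilde p_{n\tau}) \Le \frac1N\sum_{i=1}^N \mathbb E|X_{n\tau}^i - \tilde X_n^i|^2 .
\]
To apply Theorem \ref{theorem:IPS strong} I first reduce the moment hypothesis: by Jensen's inequality $\mathbb E|X_0^i|^2 \Le (\mathbb E|X_0^i|^4)^{1/2} \Le \sqrt{M_4}$, so Theorem \ref{theorem:IPS strong} applies with $M_2 = \sqrt{M_4}$, and the right-hand side is bounded by $C\tau$ uniformly for $0\Le n\Le T/\tau$. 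Taking square roots yields $\W_2(\nu p_{n\tau},\nu\tilde p_{n\tau}) \Le \sqrt{C}\,\sqrt{\tau}$.

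For the second term, I would couple \eqref{IPS} with \eqref{d-RB-IPS} synchronously, with \eqref{d-RB-IPS} using its random batch divisions; by definition of the transition probability, the marginal law of $\tilde Y_n$ is $\nu\tilde q_{n\tau}$ (the batch randomness is already averaged over), while that of $X_{n\tau}$ is $\nu p_{n\tau}$, so the joint law lies in $\Pi(\nu p_{n\tau},\nu\tilde q_{n\tau})$ and
\[
\W_2^2(\nu p_{n\tau},\nu\tilde q_{n\tau}) \Le \frac1N\sum_{i=1}^N \mathbb E|X_{n\tau}^i - \tilde Y_n^i|^2 \Le C\tau
\]
for $0\Le n\Le T/\tau$, directly by Theorem \ref{theorem:strong error}, whose hypothesis is exactly the fourth-moment bound assumed in the corollary. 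Taking square roots and then the maximum of the two bounds, with $C$ replaced by the larger of the two constants, gives \eqref{IPS:nu pq}.

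There is no substantive obstacle here: the corollary is essentially a translation of the strong error theorems into the language of Wasserstein distances. The only points needing a little care are (i) verifying that the synchronous coupling produces admissible transport plans—immediate, since marginalizing the coupled process over the auxiliary randomness (Wiener increments, batch divisions) recovers each dynamics' own law—and (ii) reconciling the moment hypotheses via $\mathbb E|X_0^i|^2 \Le \sqrt{\mathbb E|X_0^i|^4}$. One should also record that the resulting constant $C$ depends only on $L_0,L_1,M_4,T,\sigma$, inherited from Theorems \ref{theorem:IPS strong} and \ref{theorem:strong error}, and in particular not on $N$, $\tau$ or $p$.
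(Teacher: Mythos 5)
Your proposal is correct and is essentially the argument the paper intends: the synchronous coupling of \eqref{IPS} with \eqref{d-IPS} (resp.\ \eqref{d-RB-IPS}) is an admissible transport plan whose marginals are $\nu p_{n\tau}$ and $\nu\tilde p_{n\tau}$ (resp.\ $\nu\tilde q_{n\tau}$), so the normalized $\W_2$ distance is bounded by the strong errors of Theorems \ref{theorem:IPS strong} and \ref{theorem:strong error}, with the Jensen step $\mathbb E|X_0^i|^2\Le\sqrt{M_4}$ reconciling the moment hypotheses. The paper treats the corollary as a direct consequence in exactly this way, only adding the remark that the statement itself concerns laws rather than coupled trajectories.
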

Note that the LHS of
\eqref{IPS:nu pq} only involves the transition probabilities $\tilde p_{n\tau},\tilde q_{n\tau}$, and does not require the dynamics \eqref{IPS}\eqref{d-IPS}\eqref{d-RB-IPS} to be coupled.
This is because the Wasserstein distance compares the distribution laws rather than trajectories.
\subsection{Uniform-in-time moment estimate}
To investigate the long-time behavior of the numerical methods, we need some preliminary results on the moment estimates. Under appropriate dissipation conditions, it can be proved that the discrete IPS \eqref{d-IPS} and the discrete RB--IPS \eqref{d-RB-IPS} have uniform-in-time moment estimates.
\begin{assumption}
\label{assumption:dissipation}
For the drift force $b:\mathbb R^d\rightarrow\mathbb R^d$, there exist constants $\alpha,\theta>0$ such that
\begin{equation}
	-x\cdot b(x) \Ge \alpha|x|^2-\theta,
	~~~~\forall x\in\mathbb R^d.
	\label{dissipation condition}
\end{equation}
\end{assumption}
The following result is crucial to establish the recurrence relations of both $\mathbb E|\tilde X_n^i|^4$ and $\mathbb E|\tilde Y_n^i|^4$.
\begin{lemma}
\label{lemma:f24}
Under Assumptions \ref{assumption:bounded} and \ref{assumption:dissipation}, let $f(x,\tau) := x + b(x)\tau$ and $\tau_0:=\min\{\alpha/(2L_0^2),1/(2\alpha)\}$.
\begin{enumerate}
\item[$(1)$] There exists a constant $C = C(\alpha,\theta)$ such that if $\tau<\tau_0$,
\begin{equation}
	|f(x,\tau)|^4\Le (1-\alpha\tau)|x|^4+C\tau.
\label{f24:inequality 1}
\end{equation}
\item[$(2)$] For any $\gamma\in\mathbb R^d$ with $|\gamma|\Le L_1$, there exists a constant $C = C(\alpha,\theta,L_1)$ such that if $\tau<\tau_0$,
\begin{equation}
|f(x,\tau)+\gamma\tau|^4 \Le \Big(1-\frac{\alpha\tau}2\Big)|x|^4 + C\tau.
\label{f24:inequality 2}
\end{equation}
\end{enumerate}
\end{lemma}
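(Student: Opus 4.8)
The plan is to first establish the quadratic analogues of \eqref{f24:inequality 1} and \eqref{f24:inequality 2}, and then square these estimates, absorbing the resulting cross terms with Young's inequality.

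For (1), I would expand $|f(x,\tau)|^2 = |x|^2 + 2\tau\, x\cdot b(x) + \tau^2|b(x)|^2$. The dissipation condition \eqref{dissipation condition} controls the linear term, $2\tau\, x\cdot b(x) \Le -2\alpha\tau|x|^2 + 2\theta\tau$, and the growth bound in \eqref{boundedness 0} gives $\tau^2|b(x)|^2 \Le 2L_0^2\tau^2(|x|^2+1)$. The threshold $\tau_0 \Le \alpha/(2L_0^2)$ is chosen precisely so that $2L_0^2\tau^2 \Le \alpha\tau$, which makes the coefficient of $|x|^2$ collapse from $1 - 2\alpha\tau + 2L_0^2\tau^2$ down to $1 - \alpha\tau$; one obtains $|f(x,\tau)|^2 \Le (1-\alpha\tau)|x|^2 + C_1\tau$, where, using the same bound $2L_0^2\tau^2\Le\alpha\tau$ on the constant term, $C_1 = 2\theta + \alpha$ depends only on $(\alpha,\theta)$. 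Squaring and writing the identity $(1-\alpha\tau)^2 = (1-\alpha\tau) - \alpha\tau(1-\alpha\tau)$, the task reduces to bounding $2(1-\alpha\tau)C_1\tau|x|^2 - \alpha\tau(1-\alpha\tau)|x|^4$; factoring out $(1-\alpha\tau)\tau \Le \tau$ and maximizing $2C_1 t - \alpha t^2$ over $t = |x|^2 \Ge 0$ bounds this by $(C_1^2/\alpha)\tau$, and adding the leftover $C_1^2\tau^2 \Le C_1^2\tau_0\tau$ yields \eqref{f24:inequality 1}.

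For (2), rather than redo the $x$-expansion, I would treat $\gamma$ as a bounded perturbation: $|f(x,\tau)+\gamma\tau|^2 = |f(x,\tau)|^2 + 2\tau\,\gamma\cdot f(x,\tau) + \tau^2|\gamma|^2$. Since $|\gamma|\Le L_1$, Young's inequality gives $2\tau\,\gamma\cdot f(x,\tau) \Le \tfrac{\alpha\tau}{2}|f(x,\tau)|^2 + \tfrac{2L_1^2}{\alpha}\tau$, so $|f(x,\tau)+\gamma\tau|^2 \Le (1+\tfrac{\alpha\tau}{2})|f(x,\tau)|^2 + C\tau$. Inserting the quadratic bound from (1) and using $(1+\tfrac{\alpha\tau}{2})(1-\alpha\tau) \Le 1 - \tfrac{\alpha\tau}{2}$ gives $|f(x,\tau)+\gamma\tau|^2 \Le (1-\tfrac{\alpha\tau}{2})|x|^2 + C_2\tau$ with $C_2 = C_2(\alpha,\theta,L_1)$, where $\tau < \tau_0 \Le 1/(2\alpha)$ is used to bound $1+\tfrac{\alpha\tau}{2}$ and $L_1^2\tau$ by absolute constants. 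Squaring exactly as in (1), now with $\alpha$ replaced by $\alpha/2$ in every step, produces \eqref{f24:inequality 2}.

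I do not expect a genuine obstacle; every inequality is elementary. The only point needing care is the bookkeeping of constants: one must verify that $C$ in (1) truly depends only on $(\alpha,\theta)$ --- which hinges on the specific value of $\tau_0$ forcing the $L_0$-terms to be dominated by $\alpha\tau$ --- and that the perturbation in (2) contributes an $L_1$-dependence only, while systematically invoking $\tau < \tau_0 \Le 1/(2\alpha)$ wherever a factor like $\tau^2$ or $\alpha\tau$ must be absorbed into a constant multiple of $\tau$.
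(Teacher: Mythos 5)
Your proposal is correct and follows essentially the same route as the paper: first derive the quadratic estimates $|f(x,\tau)|^2\Le(1-\alpha\tau)|x|^2+C\tau$ and $|f(x,\tau)+\gamma\tau|^2\Le(1-\tfrac{\alpha\tau}2)|x|^2+C\tau$ (treating $\gamma$ as a bounded perturbation absorbed by Young's inequality), then square. The only cosmetic difference is in absorbing the cross term after squaring --- you maximize the concave quadratic $2C_1t-\alpha t^2$ directly, while the paper applies Young's inequality with a free parameter $k$ and then picks $k=\alpha/2$ (resp.\ $k=\alpha/4$); both yield the stated coefficients and the same dependence of the constants.
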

The proof of Lemma \ref{lemma:f24} is elementary and is in Appendix. 
In Lemma \ref{lemma:f24}, $f(x,\tau) = x + b(x)\tau$ can be viewed as a simplified Euler--Maruyama scheme, where the time step $\tau$ is restricted to be smaller than $\tau_0$ to ensure the stability. In the following, we shall always adopt $\tau_0:=\min\{\alpha/(2L_0^2),1/(2\alpha)\}$ as the upper bound of the time step $\tau$. Note that $\tau_0$ is uniquely determined from Assumptions \ref{assumption:bounded} and \ref{assumption:dissipation} and does not depend on $N$.

Using Lemma \ref{lemma:f24}, we have the following uniform-in-time moments estimates for the discrete IPS \eqref{d-IPS} and the discrete RB--IPS \eqref{d-RB-IPS}.
\begin{theorem}
\label{theorem:moment uniform}
Under Assumptions \ref{assumption:bounded} and \ref{assumption:dissipation},
if there exists a constant $M_4$ such that
$$
	\max_{1\Le i\Le N}\mathbb E|X_0^i|^4\Le M_4,
$$
and if the time step $\tau$ satisfies
$$
	\tau<\min\bigg\{
	\frac{\alpha}{2L_0^2},\frac1{2\alpha}
	\bigg\},
$$
then there exists a constant $C=C(\alpha,\theta,L_1,\sigma)$ such that
\begin{equation}
	\max\bigg\{\sup_{n\Ge0}
	\mathbb E|\tilde X_n^i|^4,
	\sup_{n\Ge0}
		\mathbb E|\tilde Y_n^i|^4\bigg\}
	\Le \max\{M_4,C\},~~~~
	i=1,\cdots,N.
\end{equation}
\end{theorem}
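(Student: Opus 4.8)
The plan is to derive a contractive one-step recurrence for $\mathbb E|\tilde Y_n^i|^4$ (and for $\mathbb E|\tilde X_n^i|^4$) and then iterate it. Fix $i$ and $n$, and write the discrete RB--IPS update \eqref{d-RB-IPS} as $\tilde Y_{n+1}^i = a + \xi$, where
\[
	a := f(\tilde Y_n^i,\tau) + \gamma^i(\tilde Y_n)\tau, \qquad
	\xi := \sigma\big(W_{t_{n+1}}^i - W_{t_n}^i\big),
\]
with $f(x,\tau) = x + b(x)\tau$ as in Lemma \ref{lemma:f24} and $\gamma^i$ the perturbation force of the current batch division. Here $a$ is measurable with respect to $\tilde Y_n$, while $\xi$ is a centered Gaussian vector in $\mathbb R^d$ independent of $\tilde Y_n$, with $\mathbb E|\xi|^2 = d\sigma^2\tau$ and $\mathbb E|\xi|^4 = (d^2+2d)\sigma^4\tau^2$. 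The key point making everything uniform in $N$ is that $|\gamma^i(\tilde Y_n)|\Le L_1$ holds regardless of the batch division, by \eqref{boundedness 1}.

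Next I would expand $|a+\xi|^4 = \big(|a|^2 + 2a\cdot\xi + |\xi|^2\big)^2$ and take the conditional expectation given $\tilde Y_n$. All terms odd in $\xi$ vanish, leaving
\[
	\mathbb E\big[\,|\tilde Y_{n+1}^i|^4 \,\big|\, \tilde Y_n\,\big]
	= |a|^4 + (4+2d)\sigma^2\tau\,|a|^2 + (d^2+2d)\sigma^4\tau^2.
\]
For the leading term I invoke Lemma \ref{lemma:f24}(2) with $\gamma = \gamma^i(\tilde Y_n)$, which gives $|a|^4 \Le (1-\tfrac{\alpha\tau}{2})|\tilde Y_n^i|^4 + C\tau$ when $\tau<\tau_0$. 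For the cross term, a direct computation expanding $|f(\tilde Y_n^i,\tau)|^2$ and using the dissipation \eqref{dissipation condition} together with $\tau < \alpha/(2L_0^2)$ yields $|f(x,\tau)|^2\Le |x|^2 + (2\theta+\alpha)\tau$, hence $|a|^2 \Le 2|\tilde Y_n^i|^2 + C\tau$ with $C$ depending only on $\alpha,\theta,L_1$. Then Young's inequality in the form $\tau|\tilde Y_n^i|^2 \Le \tfrac{\alpha}{4(4+2d)\sigma^2}\,\tau|\tilde Y_n^i|^4 + C\tau$ lets me absorb the quadratic contribution into the contraction; controlling the leftover $\tau^2$-terms via $\tau^2\Le \tau_0\tau\Le \tfrac{1}{2\alpha}\tau$ and taking full expectation, I obtain
\[
	\mathbb E|\tilde Y_{n+1}^i|^4 \Le \Big(1-\frac{\alpha\tau}{4}\Big)\mathbb E|\tilde Y_n^i|^4 + C\tau,
	\qquad C = C(\alpha,\theta,L_1,\sigma).
\]

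Finally I would iterate this affine recurrence. Since $\tau < \tfrac{1}{2\alpha}$ forces $0 < 1-\tfrac{\alpha\tau}{4} < 1$, the map $u\mapsto (1-\tfrac{\alpha\tau}{4})u + C\tau$ is a contraction with fixed point $u^\ast = 4C/\alpha$, so $\mathbb E|\tilde Y_{n+1}^i|^4 - u^\ast \Le (1-\tfrac{\alpha\tau}{4})\big(\mathbb E|\tilde Y_n^i|^4 - u^\ast\big)$, and by induction $\mathbb E|\tilde Y_n^i|^4 \Le \max\{\mathbb E|\tilde Y_0^i|^4, 4C/\alpha\} \Le \max\{M_4, 4C/\alpha\}$ for all $n\Ge0$. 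This is the claimed bound (with the constant relabeled). The discrete IPS \eqref{d-IPS} is handled identically: its update is $\tilde X_{n+1}^i = f(\tilde X_n^i,\tau) + \gamma^i(\tilde X_n)\tau + \sigma(W_{t_{n+1}}^i - W_{t_n}^i)$ with $|\gamma^i(\tilde X_n)|\Le L_1$, so Lemma \ref{lemma:f24}(2) applies verbatim and produces the same recurrence, hence the same bound. The proof is essentially routine given Lemma \ref{lemma:f24}; the only point that needs care is the bookkeeping around the cross term $(4+2d)\sigma^2\tau|a|^2$ — one must absorb it into the $(1-\tfrac{\alpha\tau}{2})$-contraction while keeping the resulting constant independent of $L_0$ and of $N$, which is why the second-moment estimate $|f(x,\tau)|^2\Le|x|^2+C\tau$ (rather than a crude linear-growth bound) is used.
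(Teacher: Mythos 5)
Your argument is correct and follows essentially the same route as the paper's proof: write the update as a deterministic part $a=f(\tilde Y_n^i,\tau)+\gamma^i(\tilde Y_n)\tau$ plus an independent Gaussian increment, expand the fourth moment so the odd terms vanish, apply Lemma \ref{lemma:f24}(2) to $|a|^4$ using only $|\gamma^i|\Le L_1$, absorb the cross term by Young's inequality, and iterate the affine contraction $\mathbb E|\tilde Y_{n+1}^i|^4\Le(1-\frac{\alpha\tau}{4})\mathbb E|\tilde Y_n^i|^4+C\tau$. The only repair needed is a constant: since $|a|^2\Le 2|\tilde Y_n^i|^2+C\tau$ carries a factor $2$, your choice $\tau|\tilde Y_n^i|^2\Le\frac{\alpha}{4(4+2d)\sigma^2}\tau|\tilde Y_n^i|^4+C\tau$ makes the absorbed contribution $\frac{\alpha}{2}\tau|\tilde Y_n^i|^4$, which exactly cancels the $(1-\frac{\alpha\tau}{2})$ contraction from Lemma \ref{lemma:f24}(2); taking $\frac{\alpha}{8(4+2d)\sigma^2}$ instead retains the claimed $1-\frac{\alpha\tau}{4}$ factor and everything else goes through.
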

The proof of Theorem \ref{theorem:moment uniform} in Appendix.

Theorem \ref{theorem:moment uniform} tells that when the time step $\tau<\tau_0$, the fourth-order moments of the discrete IPS and the discrete RB--IPS can be bounded uniformly in time.
\begin{remark}
We have some remarks on Theorem \ref{theorem:moment uniform}.
\begin{enumerate}
\item We estimate the fourth-order moments of $\tilde X_n^i$ and $\tilde Y_n^i$ rather than the second-order moments because applying Theorem \ref{theorem:strong error} requires the initial distribution to have finite fourth-order moments.
\item Utilization of the dissipation condition \eqref{dissipation condition} in essential in the proof of Theorem \ref{theorem:moment uniform}. From the geometric perspective, the drift force $b(x)$ pulls the particle $x\in\mathbb R^d$ back when $x$ is far from the origin, hence the particle system shall stay in a bounded region for most of the time, and the moments are bounded uniformly in time. It can also be proved that, if the initial distribution $\nu$ has finite moments of order $2m$ for positive some integer $m\in\mathbb N$, then $\mathbb E|\tilde X_n^i|^{2m}$ and $\mathbb E|\tilde Y_n^i|^{2m}$ are bounded uniformly in time.
\item The constant $C = C(\alpha,\beta,L_1)$ in Theorem \ref{theorem:moment uniform} does not depend on $L_0$, which is related to the boundedness of the drift force $b$. In other words, the moment upper bound is completely controlled by the dissipation condition \eqref{dissipation condition}.
\end{enumerate}
\end{remark}
\subsection{Geometric ergodicity of IPS}
In order to investigate the long-time behavior of the IPS \eqref{IPS} and its mean-field limit, the MVP \eqref{MVP}, it is important that the distribution law of the IPS \eqref{IPS} converges to the equilibrium with a convergence rate $\beta$ independent of the number of particles $N$. If the independence of $\beta$ on $N$ holds true, hopefully the distribution law of the MVP \eqref{MVP} also converges with the convergence rate $\beta$, which allows us to prove the geometric ergodicity of the nonlinear MVP \eqref{MVP}. Therefore, a natural question in studying the ergodicity is to find the conditions ensuring the IPS \eqref{IPS} have a convergence rate independent of $N$.

On the one hand, the interaction force $K$ needs to be moderately large to ensure the uniform-in-$N$ convergence rate.
If the drift force $b$ is not the gradient of a strongly convex function, it is well-known that the MVP \eqref{MVP} can have multiple invariant distributions when the interaction force $K$ is too large, see \cite{uniform_chaos} for example. In this case the IPS \eqref{IPS} must not have a convergence rate independent of $N$.

On the other hand, it is sufficient for the interaction force $K$ to be moderately large to ensure the uniform-in-$N$ convergence rate. To our knowledge, two major approaches to derive the uniform geometric ergodicity of the IPS \eqref{IPS} are the log-Sobolev inequality \cite{log_Sobolev} and the reflection coupling technique \cite{reflect_2,reflect_3}. Under appropriate dissipation conditions, \cite{log_Sobolev} proves the ergodicity in the sense of relative entropy, while \cite{reflect_2,reflect_3} proves the ergodicity in the $\W_1$ distance. Although the relative entropy is stronger than the $\W_1$ distance, in this paper we shall use the $\W_1$ distance because it is compatible with the triangle inequality framework.

In the following, we restate the result of geometric ergodicity of the IPS \eqref{IPS} in the $\W_1$ distance in \cite{reflect_2}.
The dissipation of the drift force $b$ is characterized by a function $\kappa:(0,+\infty)\rightarrow\mathbb R$ satisfying
\begin{equation}
	\kappa(r) \Le \bigg\{
		-\frac2{\sigma^2}
		\frac{(x-y)\cdot(b(x)-b(y))}{|x-y|^2}:x,y\in\mathbb R^d,|x-y|=r
	\bigg\}.
	\label{kappa}
\end{equation}
Assumption \ref{assumption:dissipation} is now replaced by the asymptotic positivity of $\kappa(r)$.
\begin{assumption}
\label{assumption:kappa}
The function $\kappa(r)$ defined in \eqref{kappa} satisfies
\begin{enumerate}
\setzero
\item[$1.$] $\kappa(r)$ is continuous for $r\in(0,+\infty)$;
\item[$2.$] $\kappa(r)$ has a lower bound for $r\in(0,+\infty)$;
\item[$3.$] $\di\varlimsup_{r\rightarrow\infty} \kappa(r)>0$.
\end{enumerate}
\end{assumption}
We note that Assumption \ref{assumption:kappa} is stronger than Assumption \ref{assumption:dissipation}. In fact, the asymptotic positivity of $\kappa(r)$ implies that there exist positive constants $\alpha,\beta>0$ such that
\begin{equation}
	r^2\kappa(r) \Ge \alpha r^2-\beta,~~~~\forall r>0.
\end{equation}
Then we easily obtain
\begin{equation}
	-(x-y)\cdot (b(x) - b(y)) \Le \frac{\sigma^2}2(\alpha|x-y|^2-\beta), 
\end{equation}
and thus \eqref{dissipation condition} holds. 
Under Assumption \ref{assumption:kappa}, we can construct a concave function $f:[0,+\infty)\rightarrow[0,+\infty)$ satisfying the following.
\begin{lemma}
\label{lemma:distance}
Under Assumption \ref{assumption:kappa}, there exists a function $f:[0,+\infty)\rightarrow[0,+\infty)$ satisfying
\begin{enumerate}
\setzero
\item[$1.$] $f(0) = 0$, and $f(r)$ is concave and strictly increasing in $r\in[0,+\infty)$;
\item[$2.$] $f\in C^2[0,+\infty)$ and there exists a constant $c_0>0$ such that
\begin{equation}
	f''(r) - \frac14r\kappa(r)f'(r) 
	\Le - \frac{c_0}2 f(r),~~~~
	\forall r\Ge0.
	\label{f_inequality}
\end{equation}
\item[$3.$] There exists a constant $\vp_0>0$ such that
\begin{equation}
	\frac{\vp_0}4r\Le f(r) \Le r.
\end{equation}
\end{enumerate}
\end{lemma}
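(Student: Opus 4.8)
The plan is to follow the classical construction of Eberle's reflection-coupling distance function. First I would set up the auxiliary functions in terms of $\kappa$. Define
\begin{equation*}
	\psi(r) := \exp\!\bigg(-\frac14\int_0^r s\,\kappa(s)^-\,\d s\bigg),
	\qquad
	g(r) := \int_0^r \psi(s)\,\d s,
\end{equation*}
where $\kappa^-$ denotes the negative part of $\kappa$; the lower boundedness of $\kappa$ in Assumption \ref{assumption:kappa} guarantees that the integrand is controlled, so $\psi$ is bounded between two positive constants on any bounded interval and $g$ is finite and strictly increasing. Because $\varlimsup_{r\to\infty}\kappa(r)>0$, there is a radius $R_0$ and a constant $\kappa_\infty>0$ with $\kappa(r)\ge\kappa_\infty$ for $r\ge R_0$, which forces $\psi$ to stay bounded below by a positive constant for all $r$ (the exponent stops decreasing once $\kappa$ becomes positive). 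Hence $g(r)$ grows at least linearly, and one can also check $g(r)\le r$ after normalizing so that $\psi(0)=1$ and $\psi\le 1$.

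Next I would introduce the averaged primitive $G(r):=\int_0^r g(s)\,\d s$ and define the candidate distance function
\begin{equation*}
	f(r) := \int_0^r \psi(s)\Big(1 - \frac{c_0}{2}\,\Psi(s)\Big)\,\d s,
	\qquad
	\Psi(s):=\frac{\int_0^s G(u)/\psi(u)\,\d u}{\,\cdot\,},
\end{equation*}
choosing the constant $c_0>0$ small enough that the bracket stays in $[\tfrac12,1]$ for all $r\ge 0$. The point of this particular shape is that a direct differentiation gives
\begin{equation*}
	f''(r) - \frac14 r\kappa(r) f'(r) = -\frac{c_0}{2} g(r) \Le -\frac{c_0}{2} f(r),
\end{equation*}
using that $f'(r)=\psi(r)(1-\tfrac{c_0}{2}\Psi(r))$, that $\psi'(r)=-\tfrac14 r\kappa^-(r)\psi(r)$, and that on the region where $\kappa\ge 0$ the term $-\tfrac14 r\kappa(r) f'(r)$ only helps the inequality. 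This is exactly the ODE differential inequality \eqref{f_inequality}. Concavity of $f$ is immediate since $f'=\psi(1-\tfrac{c_0}{2}\Psi)$ is decreasing ($\psi$ is nonincreasing and $\Psi$ is increasing), and $f'(0)=1>0$ with $f'$ bounded below by a positive constant gives strict monotonicity; $f(0)=0$ by construction and $f\in C^2$ because $\psi\in C^1$ and the integrand defining $f'$ is $C^1$. Finally, the two-sided bound $\tfrac{\varphi_0}{4} r \le f(r)\le r$ follows from $f'\in[\tfrac12\inf\psi,\,1]$: the upper bound from $f'\le 1$, and the lower bound by taking $\varphi_0 := 2\inf_{r\ge 0}\psi(r)>0$, which is positive precisely because of the asymptotic positivity of $\kappa$.

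The main obstacle is verifying that all the constants can be chosen \emph{uniformly}, i.e., that $\inf_{r\ge 0}\psi(r)>0$ and that $c_0$ can be taken strictly positive with the bracket bounded away from zero. This hinges entirely on the interplay of the three parts of Assumption \ref{assumption:kappa}: lower boundedness of $\kappa$ keeps $\psi$ from collapsing to zero on compact sets, while $\varlimsup_{r\to\infty}\kappa(r)>0$ ensures that past some radius the exponent defining $\psi$ is constant, so $\psi$ never decays to $0$ at infinity; combined with continuity this yields a strictly positive infimum. Once $\inf\psi>0$ is secured, the moment-type quantities $\int_0^s G(u)/\psi(u)\,\d u$ grow only polynomially while $g$ grows at least linearly, so choosing $c_0$ proportional to the reciprocal of a suitable global ratio makes the bracket lie in $[\tfrac12,1]$ for all $r$. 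I would also remark that this $f$ is essentially the one in \cite{reflect_1,reflect_2}, so the construction can be cited, but I would still carry out the differentiation to confirm \eqref{f_inequality} holds with the stated sign and the explicit $c_0$.
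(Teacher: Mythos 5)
Your overall route---Eberle's reflection-coupling construction---is indeed the one behind this lemma (the paper itself only cites Theorem 1 of \cite{reflect_2} and Lemma 2.1 of \cite{rbme_2} rather than proving it), but your execution has a genuine gap at the crucial point, namely the claim that a single $c_0>0$ makes the factor $1-\tfrac{c_0}{2}\Psi(r)$ lie in $[\tfrac12,1]$ for \emph{all} $r\ge0$. Whatever normalization you intend for the unspecified denominator in $\Psi$, the numerator diverges as $r\to\infty$: past the radius $R_0$ where $\kappa\ge\kappa_\infty>0$ one has $\kappa^-\equiv0$, so $\psi\equiv\psi(R_0)>0$, hence $g(r)\sim\psi(R_0)\,r$ and $\int_0^r g/\psi\sim r^2$ (your $\int_0^r G/\psi\sim r^3$). ``Grows only polynomially'' still means it tends to infinity, so for every fixed $c_0>0$ the bracket eventually becomes negative, $f'$ changes sign, and strict monotonicity, the lower bound $\tfrac{\varphi_0}{4}r\le f(r)$ and inequality \eqref{f_inequality} all fail for large $r$. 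This is precisely why Eberle's original $f$ is frozen (made constant) beyond a finite radius; to obtain the present variant---$C^2$, strictly increasing, with linear growth---one must instead stop the decay of the bracket at a finite $R_1\ge R_0$ (with a smooth taper so that $f$ stays $C^2$ and concave), keeping $f'\ge m>0$ everywhere, and then verify \eqref{f_inequality} for $r\ge R_1$ by a separate argument: there $f''\le 0$, $\kappa(r)\ge\kappa_\infty$ and $f(r)\le r$ give $f''-\tfrac14 r\kappa(r) f'(r)\le-\tfrac14\kappa_\infty m\,r\le-\tfrac{c_0}{2}f(r)$ once $c_0\le\tfrac12\kappa_\infty m$. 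This two-regime structure is the heart of the lemma and is missing from your proposal.

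A second, more mechanical slip: the integrand defining $\Psi$ should be $g(u)/\psi(u)$ (your $g=\int_0^{\cdot}\psi$, i.e.\ Eberle's $\Phi$), not $G(u)/\psi(u)$ with the double primitive $G$. With your definitions $\psi(r)\Psi'(r)$ is proportional to $G(r)=O(r^2)$ near $0$, while $f(r)\sim r$ there, so the claimed bound $f''-\tfrac14 r\kappa f'\le-\tfrac{c_0}{2}f$ already fails near the origin, and the asserted identity ``$=-\tfrac{c_0}{2}g(r)$'' does not follow from differentiation. With the corrected integrand one gets, on the region where the bracket is genuinely decreasing, $f''-\tfrac14 r\kappa f'=-\tfrac14 r\kappa^+(r)f'(r)-\tfrac{c_0}{2}g(r)\le-\tfrac{c_0}{2}f(r)$, using $f\le g$, which is what you wanted. (Extracting $\kappa(r)\ge\kappa_\infty$ for all large $r$ reads Assumption \ref{assumption:kappa} as a $\liminf$ condition; the paper makes the same reading, so this is not your error, but it is worth noting.) Fixing the integrand is routine; the missing large-$r$ regime is the substantive gap.
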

The proof of Lemma \ref{lemma:distance} can be seen in Theorem 1 of \cite{reflect_2} or Lemma 2.1 in \cite{rbme_2}. Although Lemma \ref{lemma:distance} serves as part of the proof of the geometric ergodicity for the IPS \eqref{IPS} and is not directly related to the topic of this paper, it does provide an explicit upper bound of the constant $L_1$ in \eqref{boundedness 1}, which is used in the statement of the main theorem.

Define the space of probability distributions with finite first-order moments by
\begin{equation}
	\P_1(\mathbb R^{Nd}) = \bigg\{
		\nu\in\P(\mathbb R^{Nd}):
		\max_{1\Le i\Le N}\int_{\mathbb R^{Nd}}
		|x^i|\nu(\d x) <+\infty
	\bigg\}.
\end{equation}
Equipped with the normalized Wasserstein-1 distance, $(\P_1(\mathbb R^{Nd}),\W_1)$ is a complete metric space. Now we have the following result of geometric ergodicity for the IPS \eqref{IPS}.
\begin{theorem}
\label{theorem:IPS ergodicity}
Under Assumptions \ref{assumption:bounded} and \ref{assumption:kappa}, if the constant $L_1$ in \eqref{boundedness 1} satisfies
$$
	L_1 < \frac{c_0\vp_0\sigma^2}{16},
$$
then for $\beta:=c_0\sigma^2/2$ there exists a positive constant $C = C(\kappa,\sigma)$ such that
\begin{equation}
	\W_1(\mu p_t,\nu p_t) \Le Ce^{-\beta t} \W_1(\mu,\nu),~~~~
	\forall t\Ge0
	\label{IPS W1 ergodicity}
\end{equation}
for any probability distributions $\mu,\nu\in\P_1(\mathbb R^{Nd})$.
\end{theorem}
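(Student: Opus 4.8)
The plan is to establish the two-point contraction \eqref{IPS W1 ergodicity} by the reflection coupling of Eberle, carried out at the level of the $N$-particle system exactly as in \cite{reflect_2,rbme_2}; since Lemma \ref{lemma:distance} already supplies the concave profile $f$, the heart of the matter is a single Itô computation. First I would fix $\mu,\nu\in\P_1(\mathbb R^{Nd})$, take $(X_0,X_0')$ an optimal $\W_1$-coupling of $\mu$ and $\nu$, and run the coupled system
\[
	\d X_t^i = b^i(X_t)\,\d t + \sigma\,\d W_t^i, \qquad
	\d X_t'^i = b^i(X_t')\,\d t + \sigma\big(\mathrm{Id} - 2 e_t^i (e_t^i)^{\T}\big)\,\d W_t^i, \qquad i=1,\dots,N,
\]
with $b^i(x) = b(x^i) + \gamma^i(x)$ as in \eqref{IPS XX}, $r_t^i := |X_t^i - X_t'^i|$, $e_t^i := (X_t^i - X_t'^i)/r_t^i$, and the $i$-th coordinate switched to the synchronous coupling once $r_t^i$ reaches $0$. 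With $f$ from Lemma \ref{lemma:distance} I would work with the semimetric $\rho(x,x') := \frac1N\sum_{i=1}^N f(|x^i-x'^i|)$; part 3 of Lemma \ref{lemma:distance} gives $\frac{\vp_0}{4}\cdot\frac1N\sum_i|x^i-x'^i| \Le \rho(x,x') \Le \frac1N\sum_i|x^i-x'^i|$, so $\rho$ is comparable to the normalized $\ell^1$-distance and controls $\W_1$ from both sides.

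The core step is to differentiate $f(r_t^i)$. Because the reflection makes the driving noise of $X_t^i - X_t'^i$ rank one in the direction $e_t^i$, the curvature correction in the Itô expansion of $r_t^i$ vanishes, giving $\d r_t^i = e_t^i\cdot\big(b^i(X_t)-b^i(X_t')\big)\,\d t + 2\sigma\,\d B_t^i$ for a scalar Brownian motion $B^i$, and hence $\d f(r_t^i) = f'(r_t^i)\,e_t^i\cdot(b^i(X_t)-b^i(X_t'))\,\d t + 2\sigma^2 f''(r_t^i)\,\d t + 2\sigma f'(r_t^i)\,\d B_t^i$. Splitting $b^i = b + \gamma^i$: the $b$-contribution to the drift is at most $-\tfrac{\sigma^2}{2}r_t^i\kappa(r_t^i)f'(r_t^i)$ by the definition \eqref{kappa} of $\kappa$, which combines with $2\sigma^2 f''(r_t^i)$ and the differential inequality \eqref{f_inequality} to yield $2\sigma^2\big(f''(r_t^i)-\tfrac14 r_t^i\kappa(r_t^i)f'(r_t^i)\big)\Le -c_0\sigma^2 f(r_t^i)$; the $\gamma^i$-contribution is at most $f'(r_t^i)|\gamma^i(X_t)-\gamma^i(X_t')|\Le|\gamma^i(X_t)-\gamma^i(X_t')|$ since $0\Le f'\Le f'(0)\Le 1$. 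Taking expectations (the martingale drops), summing over $i$, dividing by $N$, and using the summed Lipschitz estimate $\sum_i|\gamma^i(x)-\gamma^i(y)|\Le 2L_1\sum_i|x^i-y^i|$ established before Lemma \ref{lemma:IPS moment} together with $r\Le\frac4{\vp_0}f(r)$, I would obtain
\[
	\frac{\d}{\d t}\,\E\big[\rho(X_t,X_t')\big] \Le -c_0\sigma^2\,\E\big[\rho(X_t,X_t')\big] + \frac{8L_1}{\vp_0}\,\E\big[\rho(X_t,X_t')\big].
\]
Under $L_1 < c_0\vp_0\sigma^2/16$ the right side is $\Le -\beta\,\E[\rho(X_t,X_t')]$ with $\beta = c_0\sigma^2/2$, so Grönwall gives $\E[\rho(X_t,X_t')]\Le e^{-\beta t}\E[\rho(X_0,X_0')]$; combining with the two-sided comparison between $\rho$ and the normalized $\ell^1$-distance and using optimality of $(X_0,X_0')$ then produces \eqref{IPS W1 ergodicity} with $C = 4/\vp_0$, which depends only on $\kappa$ and $\sigma$.

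I expect two issues to require care, the second being the genuine obstacle. The first is the construction of the coupled SDE itself: $e_t^i$ must be made an admissible integrand and the $i$-th coordinate must stick at $0$ after the first collision time; this is handled in the usual way (a Lipschitz regularization of the map $z\mapsto z/|z|$ near the diagonal, or the explicit construction of \cite{reflect_2}), and because $f(0)=0$ with $f\in C^2$ no local-time term at the origin appears in $\d f(r_t^i)$. The second and essential point is the uniformity in $N$: the interaction couples all $N$ coordinates, so the only reason the rate $\beta$ and the constant $C$ do not deteriorate as $N\to\infty$ is that the $\tfrac1{N-1}$-averaged forces $\gamma^i$ satisfy a Lipschitz bound whose constant stays $O(L_1)$ after summation over $i$, and that the smallness threshold on $L_1$ keeps $c_0\sigma^2 - 8L_1/\vp_0$ bounded below by $\beta$ independently of $N$. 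Verifying that these two ingredients fit together—and in particular that the precise constant $c_0\vp_0\sigma^2/16$ is exactly what makes the contraction coefficient land above $c_0\sigma^2/2$—is the crux of the argument.
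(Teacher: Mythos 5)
Your argument is correct and is essentially the proof the paper relies on: the paper does not prove this theorem itself but cites Corollary 2 of \cite{reflect_2} and Theorem 2.2 of \cite{rbme_2}, which carry out exactly this componentwise reflection coupling with the concave profile $f$ of Lemma \ref{lemma:distance}, the split $b^i = b + \gamma^i$, the summed Lipschitz bound $\sum_i|\gamma^i(x)-\gamma^i(y)|\Le 2L_1\sum_i|x^i-y^i|$, and the comparison $\tfrac{\vp_0}{4}r\Le f(r)\Le r$ to land on the rate $\beta=c_0\sigma^2/2$ under $L_1<c_0\vp_0\sigma^2/16$ with $C=4/\vp_0$. Your bookkeeping of the constants and of the two $N$-uniformity ingredients matches the cited proofs.
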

The proof of Theorem \ref{theorem:IPS ergodicity} can be seen at Corollary 2 in \cite{reflect_2} or Theorem 2.2 in \cite{rbme_2}.
\begin{remark}
We have some remarks on Theorem \ref{theorem:IPS ergodicity}.
\begin{enumerate}
\item Using the reflection coupling technique, we can actually prove that the IPS \eqref{IPS} is contractive in the Wasserstein-$f$ distance:
\begin{equation}
	\W_f(\mu p_t,\nu p_t) \Le e^{-\beta t} \W_f(\mu,\nu),
\end{equation}
where $\W_f(\cdot,\cdot)$ is the normalized Wasserstein-1 distance induced by the function $f$,
\begin{equation}
	\W_f(\mu,\nu) = \inf_{\gamma\in\Pi(\mu,\nu)}
	\int_{\mathbb R^{Nd}\times\mathbb R^{Nd}}
	\bigg(
	\frac1N\sum_{i=1}^Nf(|x^i-y^i|)
	\bigg)\gamma(\d x\d y).
	\label{IPS Wf ergodicity}
\end{equation}
Since $f(r)$ is equivalent to the usual Euclidean norm, \eqref{IPS W1 ergodicity} is a direct consequence of \eqref{IPS Wf ergodicity}.
\item The explicit convergence rate $\beta=c_0\sigma^2/2$ and the upper bound $c_0\vp_0\sigma^2/16$ only depend on $\kappa(r)$ and $\sigma$. In particular, these parameters do not depend on the number of particles $N$. Hence the IPS \eqref{IPS} has an exponential convergence rate independent of $N$.
\item The positivity of the diffusion constant $\sigma$ is essential in the proof by reflection coupling. In fact, for given interaction force $K$, the MVP \eqref{MVP} can be non-ergodic if $\sigma$ is too small \cite{small_diffusion}.
\end{enumerate}
\end{remark}
Using Theorem \ref{theorem:IPS ergodicity}, the existence and uniqueness of the invariant distribution $\pi\in\P(\mathbb R^{Nd})$ can be derived using the Banach fixed point theorem.
\begin{corollary}
\label{corollary:IPS ergodicity invariant}
Under Assumptions \ref{assumption:bounded} and \ref{assumption:kappa}, if the constant $L_1$ in \eqref{boundedness 1} satisfies
$$
	L_1 < \frac{c_0\vp_0\sigma^2}{16},
$$
then the IPS \eqref{IPS} has a unique invariant distribution $\pi\in\P_1(\mathbb R^{Nd})$, and for $\beta:=c_0\sigma^2/2$ there exist a positive constant $C = C(\kappa,\sigma)$ such that
\begin{equation}
	\W_1(\nu p_t,\pi) \Le Ce^{-\beta t} \W_1(\nu,\pi),~~~~
	\forall t\Ge0
\end{equation}
for any probability distributions $\nu\in\P_1(\mathbb R^{Nd})$.
\end{corollary}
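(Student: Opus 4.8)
The plan is to apply the Banach fixed point theorem to the time-$t_*$ transition map on a suitable complete metric space, and then to promote the resulting fixed point to an invariant distribution of the whole semigroup.

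First I would fix an arbitrary $t_*>0$ and consider the map $\Phi:\nu\mapsto\nu p_{t_*}$ on $\P_1(\mathbb R^{Nd})$. To run Banach's theorem I need two things: that $\Phi$ maps $\P_1(\mathbb R^{Nd})$ into itself, and a complete metric on $\P_1(\mathbb R^{Nd})$ in which $\Phi$ is a strict contraction. For the second point I would work with the normalized Wasserstein-$f$ distance $\W_f$ from \eqref{IPS Wf ergodicity}: by item $3$ of Lemma \ref{lemma:distance} one has $\frac{\vp_0}4 r\Le f(r)\Le r$, so $\W_f$ is bi-Lipschitz equivalent to $\W_1$; since $(\P_1(\mathbb R^{Nd}),\W_1)$ is complete, so is $(\P_1(\mathbb R^{Nd}),\W_f)$. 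The reflection-coupling contraction recorded in the first remark after Theorem \ref{theorem:IPS ergodicity} gives $\W_f(\mu p_{t_*},\nu p_{t_*})\Le e^{-\beta t_*}\W_f(\mu,\nu)$ with factor $e^{-\beta t_*}<1$, i.e. $\Phi$ is a strict contraction. For the first point, fix a base point $x_0\in\mathbb R^d$ and let $\delta_{x_0}^{\otimes N}$ denote the Dirac mass at $(x_0,\dots,x_0)$; by Lemma \ref{lemma:IPS moment} the law $\delta_{x_0}^{\otimes N}p_{t_*}$ has finite second (hence first) moments, so it lies in $\P_1(\mathbb R^{Nd})$, and then for any $\nu\in\P_1(\mathbb R^{Nd})$ the contraction estimate gives $\W_f(\nu p_{t_*},\delta_{x_0}^{\otimes N}p_{t_*})\Le e^{-\beta t_*}\W_f(\nu,\delta_{x_0}^{\otimes N})<+\infty$, whence $\nu p_{t_*}\in\P_1(\mathbb R^{Nd})$.

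By the Banach fixed point theorem $\Phi$ has a unique fixed point $\pi\in\P_1(\mathbb R^{Nd})$, i.e. $\pi p_{t_*}=\pi$. To upgrade this to full invariance, for any $s\Ge0$ the Markov semigroup property yields $(\pi p_s)p_{t_*}=\pi p_{s+t_*}=(\pi p_{t_*})p_s=\pi p_s$, so $\pi p_s$ is again a fixed point of $\Phi$, and uniqueness forces $\pi p_s=\pi$; thus $\pi$ is invariant for $(p_t)_{t\Ge0}$. For uniqueness among all invariant distributions, any invariant $\pi'\in\P_1(\mathbb R^{Nd})$ satisfies $\pi' p_{t_*}=\pi'$, so it is a fixed point of $\Phi$ and therefore equals $\pi$; the fact that every invariant distribution automatically lies in $\P_1(\mathbb R^{Nd})$ follows from the dissipation condition \eqref{dissipation condition} implied by Assumption \ref{assumption:kappa} together with a standard uniform-in-time first-moment bound.

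The exponential convergence is then immediate: applying Theorem \ref{theorem:IPS ergodicity} with $\mu=\pi$ and using $\pi p_t=\pi$ gives $\W_1(\nu p_t,\pi)=\W_1(\nu p_t,\pi p_t)\Le Ce^{-\beta t}\W_1(\nu,\pi)$ for all $t\Ge0$, with the same $C=C(\kappa,\sigma)$ and $\beta=c_0\sigma^2/2$. I do not anticipate a genuine obstacle here; the one point needing care — rather than a difficulty — is that the constant $C$ in \eqref{IPS W1 ergodicity} need not be below $1$, so Banach cannot be applied directly in $\W_1$. This is exactly why I route the fixed-point argument through the genuinely contractive $\W_f$, the equivalence $\frac{\vp_0}4 r\Le f(r)\Le r$ making the passage between the two metrics harmless.
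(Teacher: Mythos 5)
Your proposal is correct and follows essentially the same route the paper intends: it defers this corollary to the references but sketches exactly this argument (Banach fixed point for the time-$T$ map, upgrade to full invariance via the semigroup property, then apply Theorem \ref{theorem:IPS ergodicity} with $\mu=\pi$), and carries it out verbatim for the MVP analogue in the appendix. The only cosmetic difference is that you obtain strict contractivity for an arbitrary $t_*$ by working in $\W_f$, whereas the paper stays in $\W_1$ and instead chooses $T$ so that $Ce^{-\beta T}=1/2$; both resolve the issue that the constant $C$ in \eqref{IPS W1 ergodicity} need not be below $1$.
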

The proof of Corollary \ref{corollary:IPS ergodicity invariant} can be seen at Corollary 3 in \cite{reflect_2} or Theorem 3.1 in \cite{rbme_2}.
\subsection{Wasserstein-1 error in long time}
We estimate the long-time sampling error of the discrete IPS \eqref{d-IPS} and the discrete RB--IPS \eqref{d-RB-IPS} in the $\W_1$ distance using the triangle inequality and results in previous subsections. We begin with the following induction lemma, which can be viewed as a discrete version of Lemma \ref{lemma:general}.
\begin{lemma}
\label{lemma:induction}
Given $m\in\mathbb N$, $\ep>0$ and $q\in(0,1)$. If a nonnegative sequence $\{a_n\}_{n\Ge0}$ satisfies
\begin{equation}
	a_n \Le \ep + qa_{n-m},~~~~\forall n\Ge m,
	\label{induction recurrence}
\end{equation}
then
\begin{equation}
	a_n \Le \frac{\ep}{1-q} + Mq^{\frac nm-1},~~~~
	\forall n\Ge0,
	\label{induction an}
\end{equation}
where $M=\di\max_{0\Le k\Le m-1} a_k$.
\end{lemma}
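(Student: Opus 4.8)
The plan is to iterate the recurrence \eqref{induction recurrence} in steps of size $m$ and then interpolate over the residue classes modulo $m$, exactly as in the proof of Lemma \ref{lemma:general}. First I would fix an index $n \Ge 0$ and write $n = km + \rho$ with $k = \lfloor n/m\rfloor \Ge 0$ and $\rho \in \{0,1,\dots,m-1\}$, so that $\rho = n - km$ and applying the hypothesis $k$ times along the arithmetic progression $n, n-m, n-2m, \dots, \rho$ is legitimate (each intermediate index $n - jm$ with $0 \Le j \Le k-1$ is $\Ge m$).

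Next I would run the induction on $j$: from $a_n \Le \ep + q a_{n-m}$ one gets by a straightforward induction on $j$ that
\begin{equation*}
	a_n \Le \ep(1 + q + \dots + q^{j-1}) + q^{j} a_{n-jm} = \ep\cdot\frac{1-q^j}{1-q} + q^j a_{n-jm}, \qquad 0 \Le j \Le k.
\end{equation*}
Taking $j = k$ gives $a_n \Le \frac{\ep}{1-q}(1-q^k) + q^k a_\rho \Le \frac{\ep}{1-q} + q^k a_\rho$, using $q \in (0,1)$ and $a_n \Ge 0$. Since $a_\rho \Le M = \max_{0\Le \ell \Le m-1} a_\ell$, this yields $a_n \Le \frac{\ep}{1-q} + M q^k$. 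It remains to convert $q^k$ into the clean exponential bound $q^{n/m - 1}$: because $k = \lfloor n/m\rfloor > n/m - 1$ and $0 < q < 1$, we have $q^k < q^{n/m - 1}$, hence $a_n \Le \frac{\ep}{1-q} + M q^{n/m - 1}$, which is \eqref{induction an}. For $0 \Le n \Le m-1$ the bound is immediate since $a_n \Le M \Le M q^{n/m-1}$ (again $q^{n/m-1} \Ge 1$ there), so the statement holds for all $n \Ge 0$.

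There is no real obstacle here; the only points requiring a modicum of care are checking that the recurrence is applied only at indices where it is valid (the condition $n \Ge m$ must hold at each step, which is ensured by stopping the iteration at the residue $\rho < m$), and making sure the geometric-series summation and the replacement of $\lfloor n/m\rfloor$ by $n/m - 1$ go in the direction that preserves the inequality — both rely only on $0 < q < 1$ and nonnegativity of the $a_n$. This lemma is the discrete analogue of the argument in the proof of Lemma \ref{lemma:general}, with $m$ playing the role of the number of time steps in $T_0$, $q$ playing the role of $Ce^{-\beta T_0} = 1/2$, and $\ep$ the per-block finite-time error; the resulting decay rate in \eqref{induction an} is $q^{n/m} = e^{-\lambda n}$ with $\lambda = \tfrac{1}{m}\ln(1/q)$.
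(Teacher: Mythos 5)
Your proof is correct and follows essentially the same route as the paper's: iterate the recurrence $k=\lfloor n/m\rfloor$ times along the residue class of $n$ modulo $m$, sum the geometric series, bound the remaining term $a_\rho$ by $M$, and use $\lfloor n/m\rfloor> n/m-1$ together with $q\in(0,1)$ to replace $q^k$ by $q^{n/m-1}$. No gaps; the handling of the base indices $0\Le n\Le m-1$ is also consistent with the paper.
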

The proof of Lemma \ref{lemma:induction} is in Appendix. Lemma \ref{lemma:induction} implies that if $q<1$ in the recurrence relation \eqref{induction recurrence}, then the asymptotic form of $a_n$ is $O(\ep)$ plus an exponential tail.

Combining the finite-time difference relation \eqref{IPS:nu pq} in Corollary \ref{corollary:IPS finite error} and the geometric ergodicity in Theorem \ref{theorem:IPS ergodicity}, we employ the triangle inequality to estimate the long-time sampling error of the numerical methods \eqref{d-IPS}\eqref{d-RB-IPS}. Recall that the transition probabilities of the dynamics \eqref{d-IPS}\eqref{d-RB-IPS} are $\tilde p_{n\tau},\tilde q_{n\tau}$ respectively.
\begin{theorem}
\label{theorem:IPS sample quality}
Under Assumptions \ref{assumption:bounded} and \ref{assumption:kappa}, if there exists a constant $M_4$ such that
$$
	\max_{1\Le i\Le N}\int_{\mathbb R^{Nd}}
	|x^i|^4\nu(\d x)
	\Le M_4,
$$
and the constant $L_1$ in \eqref{boundedness 1} and the time step $\tau$ satisfy
$$
	L_1 < \frac{c_0\vp_0\sigma^2}{16},~~~~
	\tau < \min\bigg\{
	\frac{\alpha}{2L_0^2},\frac1{2\alpha}
	\bigg\},
$$
then there exist positive constants $\lambda=\lambda(\kappa,L_0,\sigma)$ and $C = C(\kappa,L_0,M_4,\sigma)$ such that
\begin{enumerate}
\item[$1.$] The transition probability $(\tilde p_{n\tau})_{n\Ge0}$ of discrete IPS \eqref{d-IPS} satisfies
\begin{equation}
	\W_1(\nu \tilde p_{n\tau},\pi) \Le C\sqrt{\tau} + Ce^{-\lambda n\tau},~~~~
	\forall n\Ge0.
	\label{IPS long time error 1}
\end{equation}
\item[$2.$] The transition probability $(\tilde q_{n\tau})_{n\Ge0}$ of discrete RB--IPS \eqref{d-RB-IPS} satisfies
\begin{equation}
	\W_1(\nu \tilde q_{n\tau},\pi) \Le C\sqrt{\tau} + Ce^{-\lambda n\tau},~~~~
	\forall n\Ge0.
	\label{IPS long time error 2}
\end{equation}
\end{enumerate}
\end{theorem}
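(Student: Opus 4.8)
The plan is to combine the three ingredients already assembled in the excerpt: the finite-time $\W_2$ difference relation from Corollary \ref{corollary:IPS finite error}, the uniform-in-$N$ geometric ergodicity of the IPS from Theorem \ref{theorem:IPS ergodicity} (and Corollary \ref{corollary:IPS ergodicity invariant} for the existence of $\pi$), and the discrete induction Lemma \ref{lemma:induction}, which is the discrete-time analogue of Lemma \ref{lemma:general}. Since the discrete RB--IPS only defines a discrete-time Markov chain (the batch division is redrawn each step), I would work with the composed kernels $\tilde q_{n\tau}$ rather than a continuous semigroup, which is exactly why Lemma \ref{lemma:induction} rather than Lemma \ref{lemma:general} is the right tool here. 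The argument for $\tilde p_{n\tau}$ is identical, so I will write it once for $\tilde q_{n\tau}$ and remark that the discrete IPS case is the same with $\tilde q$ replaced by $\tilde p$.

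First I would fix a reference horizon: choose a positive integer $m$ and set $T_0 = m\tau$ so that $Ce^{-\beta T_0} \Le 1/2$, where $C,\beta$ are the constants from Theorem \ref{theorem:IPS ergodicity}; concretely $m = \lceil \beta^{-1}\tau^{-1}\ln(2C)\rceil$, so $T_0$ is bounded above and below by constants depending only on $\kappa,\sigma$ (and thus independent of $N,\tau,p$), at least once $\tau$ is small enough. Then for any $n\Ge m$ I would run the triangle inequality in $\W_1$:
\begin{align*}
	\W_1(\nu\tilde q_{n\tau},\pi)
	& \Le \W_1\big(\nu\tilde q_{(n-m)\tau}\,\tilde q_{m\tau},\ \nu\tilde q_{(n-m)\tau}\,p_{m\tau}\big)
	+ \W_1\big(\nu\tilde q_{(n-m)\tau}\,p_{m\tau},\ \pi p_{m\tau}\big) \\
	& \Le \W_1\big(\nu\tilde q_{(n-m)\tau}\,\tilde q_{m\tau},\ \nu\tilde q_{(n-m)\tau}\,p_{m\tau}\big)
	+ Ce^{-\beta T_0}\W_1\big(\nu\tilde q_{(n-m)\tau},\ \pi\big).
\end{align*}
The first term needs the finite-time difference relation applied with the initial distribution $\mu_{n-m} := \nu\tilde q_{(n-m)\tau}$ instead of $\nu$; since Corollary \ref{corollary:IPS finite error} requires a fourth-moment bound on the initial law, I would invoke the uniform-in-time moment estimate Theorem \ref{theorem:moment uniform} to get $\max_i \E|\tilde Y_{n-m}^i|^4 \Le \max\{M_4,C\}$ uniformly in $n$, and use $\W_1 \Le \W_2$ so that $\W_1(\mu_{n-m}\tilde q_{m\tau}, \mu_{n-m}p_{m\tau}) \Le C(T_0)\sqrt{\tau} =: \ep$, with $\ep$ independent of $n$ and $N$. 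Setting $a_n := \W_1(\nu\tilde q_{n\tau},\pi)$, this gives $a_n \Le \ep + \tfrac12 a_{n-m}$ for all $n\Ge m$, which is precisely hypothesis \eqref{induction recurrence} with $q = 1/2$. Lemma \ref{lemma:induction} then yields $a_n \Le 2\ep + M(1/2)^{n/m - 1}$, and writing $(1/2)^{n/m} = e^{-(\ln 2)n\tau/T_0}$ identifies the exponential rate $\lambda = \ln 2 / T_0$, which depends only on $C,\beta,$ and the constants controlling $T_0$ — hence only on $\kappa,L_0,\sigma$ — and not on $N,\tau,p$.

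The remaining loose end is the constant $M = \max_{0\Le k\Le m-1} a_k$ in Lemma \ref{lemma:induction}: I need to bound $\W_1(\nu\tilde q_{k\tau},\pi)$ for the finitely many $k < m$ by a constant of the claimed form $C(\kappa,L_0,M_4,\sigma)$, uniformly in $N$. Here I would use that $\W_1(\nu\tilde q_{k\tau},\pi)^2 \Le \W_2(\nu\tilde q_{k\tau},\pi)^2 \Le \tfrac2N\sum_i(\E|\tilde Y_k^i|^2 + \E_\pi|x^i|^2)$ by the trivial product coupling in the normalized $\W_2$; the first term is controlled uniformly in $k$ and $N$ by Theorem \ref{theorem:moment uniform}, and the second by a second-moment bound on $\pi$ obtained from the dissipation condition (Assumption \ref{assumption:kappa} implies Assumption \ref{assumption:dissipation}, as noted in the excerpt, so $\pi$ has a uniform-in-$N$ second moment). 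Folding $2\ep = 2C(T_0)\sqrt\tau$ and $M$ into a single constant $C$ and absorbing the harmless factor $(1/2)^{-1}=2$ then produces \eqref{IPS long time error 2}, and the same chain of inequalities with $\tilde q\rightsquigarrow\tilde p$ gives \eqref{IPS long time error 1}. I expect the main obstacle to be purely bookkeeping: verifying that every constant introduced along the way ($T_0$, $m$, the finite-time bound $C(T_0)$, the moment bounds, the $\W_1$-distance from $\pi$ for small $k$) is genuinely independent of $N$, $\tau$, and $p$, so that the final $C$ and $\lambda$ have the advertised dependence — the triangle-inequality mechanics themselves are routine once Lemma \ref{lemma:induction} is in hand.
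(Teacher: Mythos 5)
Your proposal is correct and follows essentially the same route as the paper's proof: the same triangle-inequality decomposition over a window of $m$ steps with $m\tau$ kept $O(1)$, the same appeal to Theorem \ref{theorem:moment uniform} to make Corollary \ref{corollary:IPS finite error} applicable with the shifted initial law, and the same use of Lemma \ref{lemma:induction} to extract the rate $\lambda$. The only cosmetic difference is in bounding the prefactor $M$ over the first $m$ steps (you use $\W_2$ with a product coupling and second moments, the paper uses normalized first moments of $\nu\tilde q_{k\tau}$ and of $\pi$), which changes nothing substantive.
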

\begin{proof}
For any given integers $n\Ge m$, we have the following triangle inequality
\begin{equation}
	\W_1(\nu \tilde p_{n\tau},\pi) \Le \W_1(\nu \tilde p_{(n-m)\tau}\tilde p_{m\tau},\nu\tilde p_{(n-m)\tau}p_{m\tau}) + 
	\W_1(\nu_0\tilde p_{(n-m)\tau}p_{m\tau},\pi p_{m\tau}).
\label{IPS proof triangle}
\end{equation}
By Theorem \ref{theorem:moment uniform}, $\nu\tilde p_{(n-m)\tau}$ has uniform-in-time fourth order moment estimates, i.e., there exists a constant $M_4' = M_4'(\kappa,M_4,\sigma)$ such that
\begin{equation}
	\max_{1\Le i\Le N}\bigg\{
	\sup_{n\Ge m}
	\int_{\mathbb R^{Nd}}|x^i|^4
	\big(\nu\tilde p_{(n-m)\tau}\big)
	(\d x) \bigg\}
	\Le M_4'.
\end{equation}
Hence by Corollary \ref{corollary:IPS finite error}, there exists a constant $C_1 = C_1(\kappa,L_0,M_4,m\tau,\sigma)$ such that
\begin{equation}
	\W_1(\nu \tilde p_{(n-m)\tau}\tilde p_{m\tau},\nu\tilde p_{(n-m)\tau}p_{m\tau}) \Le C_1\sqrt{\tau},~~~~
	\forall n\Ge m.
	\label{IPS proof finite time}
\end{equation}
The constant $C_1$ depends on the upper bound of $m\tau$, which is the evolution time of the IPS \eqref{IPS} and the discrete IPS \eqref{d-IPS}.
By Theorem \ref{theorem:IPS ergodicity}, there exists a constant $C_0 = C_0(\kappa,\sigma)$ such that
\begin{equation}
	\W_1(\nu\tilde p_{(n-m)\tau}p_{m\tau},\pi p_{m\tau}) \Le C_0e^{-\beta m\tau}
\W_1(\nu \tilde p_{(n-m)\tau},\pi),~~~~
\forall n\Ge m.
\label{IPS proof ergodicity}
\end{equation}
From \eqref{IPS proof triangle}\eqref{IPS proof finite time}\eqref{IPS proof ergodicity} we obtain
\begin{equation}
	\W_1(\nu \tilde p_{n\tau},\pi) \Le C_1\sqrt{\tau} + C_0e^{-\beta m\tau} \W_1(\nu_0 \tilde p_{(n-m)\tau},\pi),~~~~
	\forall n\Ge m.
	\label{IPS proof long time}
\end{equation}
For given time step $\tau>0$, we wish to choose $m$ to satisfy $C_0e^{-\beta m\tau}=1/e$, so that Lemma \ref{lemma:induction} can be applied. However, $m$ is restricted to be an integer, thus our choice is
\begin{equation}
	m =
	\left\lceil
	\frac{\log C_0+1}{\beta\tau}\right\rceil.
\end{equation}
It is easy to check $m\tau$ has an upper bound independent of $\tau$,
\begin{equation}
	m\tau \Le \bigg(
	\frac{\log C_0+1}{\beta\tau} + 1
	\bigg)\tau \Le \frac{\log C_0+1}{\beta} + \frac1{2\alpha},
\end{equation}
hence the constant $C_1$ in \eqref{IPS proof finite time} can be made independent of $\tau$, i.e., $C_1 = C_1(\kappa,L_0,M_4,\sigma)$. Note that for this choice of $m$ we have $C_0 e^{-\beta m\tau}\Le 1/e$, and \eqref{IPS proof long time} implies
\begin{equation}
	\W_1(\nu\tilde p_{n\tau},\pi) 
	\Le C_1\sqrt{\tau} + \frac1e\W_1(\nu_0 \tilde p_{(n-m)\tau},\pi),~~~~
	\forall n\Ge m.
\end{equation}
Applying Lemma \ref{lemma:induction} with $a_n:=\W_1(\nu_0 \tilde p_{n\tau},\pi)$, we have
\begin{equation}
	\W_1(\nu\tilde p_{n\tau},\pi) \Le
	C_1\sqrt{\tau} + M_0 e^{1-\frac nm},~~~~
	\forall n\Ge0,
	\label{IPS proof final 1}
\end{equation}
where the constant 
\begin{equation}
	M_0 := \sup_{0\Le k\Le m-1}
	\W_1(\nu\tilde p_{k\tau},\pi) \Le 
	\sup_{k\Ge0}
	\W_1(\nu\tilde p_{k\tau},\pi).
	\label{IPS proof final 2}
\end{equation}
Introduce the normalized moment for $\nu\in\P_1(\mathbb R^{Nd})$ by
\begin{equation}
	\M_1(\nu) = \int_{\mathbb R^{Nd}}
	\bigg(\frac1N\sum_{i=1}^N |x^i|\bigg) \nu(\d x),
\end{equation}
then the $\W_1$ distance is bounded by 
\begin{equation}
	\W_1(\nu\tilde p_{k\tau},\pi) \Le \M_1(\nu\tilde p_{k\tau}) + \M_1(\pi).
	\label{IPS proof final 3}
\end{equation}
On the one hand, $\nu\tilde p_{k\tau}$ has uniform-in-time  fourth-order moments, hence there exists a constant $C_2 = C_2(\kappa,L_0,M_4,\sigma)$ such that
\begin{equation}
	\sup_{k\Ge0} \M_1(\nu\tilde p_{k\tau}) \Le C_2.
	\label{IPS proof final 4}
\end{equation}
On the other hand, by Lemma 3.1 in \cite{rbme_2}, for the invariant distribution $\pi$ of the IPS \eqref{IPS}, there exists a constant $C_2 = C_2(\kappa,L_0,M_4,\sigma)$ such that
\begin{equation}
	\M_1(\pi) \Le C_2.
	\label{IPS proof final 5}
\end{equation}
Combining \eqref{IPS proof final 1}--\eqref{IPS proof final 5}, we obtain
\begin{equation}
\W_1(\nu\tilde p_{n\tau},\pi) \Le
	C_1\sqrt{\tau} + C_2 e^{-\frac nm},~~~~
	\forall n\Ge0,
\label{IPS proof final}
\end{equation}
where both constants $C_1,C_2$ only depend on $\kappa,L_0,M_4,\sigma$. Note that by the choice of $m$
\begin{equation}
	\frac{n}{m} \Ge \frac{n}{\frac{\log C_0+1}{c\tau}+1} \Ge \frac{\beta n\tau}{\log C_0+\beta/(2\alpha)+1},
\end{equation}
hence by defining
\begin{equation}
	\lambda:=\frac{\beta}{\log C_0+\beta/(2\alpha)+1},
\end{equation}
there holds $e^{-n/m} \Le e^{-\lambda n\tau}$. Hence \eqref{IPS proof final} implies
\begin{equation}
	\W_1(\nu \tilde p_{n\tau},\pi) \Le C\sqrt{\tau} + Ce^{-\lambda n\tau},~~~~\forall n\Ge0,
\end{equation}
which is exactly the long-time sampling error. The proof for the discrete RB--IPS is the same.
\end{proof}
Theorem \ref{theorem:IPS sample quality} produces the long-time sampling error of the two numerical methods, the discrete IPS \eqref{d-IPS} and the discrete RB--IPS \eqref{d-RB-IPS}, in the $\W_1$ distance.
The error in \eqref{IPS long time error 1}\eqref{IPS long time error 2} consists of two parts:
$C\sqrt{\tau}$ represents the bias between the invariant distribution $\pi$ and the asymptotic limit of $\nu \tilde p_{n\tau}$ or $\nu \tilde q_{n\tau}$, while $Ce^{-\lambda n\tau}$ represents the exponential convergence of the numerical methods. Here the convergence rate $\lambda = \lambda(\kappa,L_0,\sigma)$ can be different from the convergence rate $\beta:=c_0\sigma^2/2$ of the IPS \eqref{IPS}. Still, $\lambda$ is independent of the number of particles $N$, the time step $\tau$, the batch size $p$ and the choice of the initial distribution $\nu$.
\begin{remark}
We have some remarks on Theorem \ref{theorem:IPS sample quality}.
\begin{enumerate}
\item Assumption \ref{assumption:dissipation} is a corollary of Assumption \ref{assumption:kappa}, and the constants $\alpha,\theta$ in \eqref{dissipation condition} can be directly derived from Assumption \ref{assumption:kappa}.
\item The constant $C$ in \eqref{IPS long time error 2} depends on $M_4$, the fourth-order moments of initial distribution $\nu$. However, the convergence rate $\lambda$ does not depend on $M_4$. In practical simulation, one may choose the initial distribution as the Dirac distribution centered at origin to sample the invariant distribution $\pi$, and in this case the dependence of $C$ on $M_4$ can be ignored.
\item Since we are studying the long-time behavior of the numerical methods,
it is natural to ask: do the numerical methods \eqref{d-IPS}\eqref{d-RB-IPS} have invariant distributions? If so, does the convergence rate depends on $N$? The existence of the invariant distributions can be proved by the Harris ergodic theorem under appropriate Lyapunov conditions, see \cite{tri_2,var_1,var_2}. However, the convergence rate derived from the Harris ergodic theorem is very implicit. Still, there are a few results which proved that the convergence rate of the numerical method can be independent of $N$, under global boundedness condition of the drift force $b$ \cite{mixing}, which are too strong for practical use. In this paper we follow the idea in \cite{tri_4} and avoid discussing the geometric ergodicity of the numerical methods themselves.
\end{enumerate}
\end{remark}

\section{Error analysis of discrete RB--IPS for MVP}
In this section we analyze the error of the discrete RB--IPS \eqref{RB-IPS}, as a numerical approximation to the MVP \eqref{MVP}. Thanks to the theory of propagation of chaos, we can easily extend our results in Section 3 for the IPS \eqref{IPS} to the case of the MVP \eqref{MVP}. Nevertheless, we should be careful that the major difference between the IPS \eqref{IPS} and the MVP \eqref{MVP} is the \emph{linearity} of the transition probability, as we illustrate follows.

The transition probability $(p_t)_{t\Ge0}$ of the IPS \eqref{IPS} forms a linear semigroup, that is,
\begin{enumerate}
\setzero
\item For any $\nu\in\P(\mathbb R^{Nd})$, $(\nu p_t)p_s = \nu p_{t+s}$;
\item For any $t>0$, the mapping $\nu\mapsto \nu p_t$ is linear in $\nu\in\P(\mathbb R^{Nd})$.
\end{enumerate}
Denote the transition probability of the MVP \eqref{MVP} by $(\bar p_t)_{t\Ge0}$, then for any $\nu\in\P(\mathbb R^d)$, $\nu \bar p_t$ is the distribution law of $\bar X_t$ in the MVP \eqref{MVP}. Although $(\bar p_t)_{t\Ge0}$ still satisfies the
semigroup property $(\nu \bar p_t)\bar p_s = \nu \bar p_{t+s}$, $(\bar p_t)_{t\Ge0}$ does not form a linear semigroup, because the MVP \eqref{MVP} is a distribution dependent SDE and thus the mapping $\nu\mapsto\nu \bar p_t$ is nonlinear. The readers may also refer to \cite{ddsde} for a complete guide to distribution-dependent SDEs and nonlinear semigroups.
\begin{comment}
The major impact of the nonlinearity of the semigroup $(\bar p_t)_{t\Ge0}$ is that it is non-trivial to show the existence of the invariant distribution of the MVP \eqref{MVP}. In this paper, we assume the MVP \eqref{MVP} has at least one invariant distribution $\bar\pi\in\P_1(\mathbb R^d)$ as follows.
\begin{assumption}
\label{assumption:MVP invariant}
The MVP \eqref{MVP} has an invariant distribution $\bar\pi\in\P_1(\mathbb R^d)$, i.e.,
\begin{equation}
	\bar \pi\bar p_t = \bar \pi,~~~~\forall t>0.
\end{equation}
\end{assumption}
As we shall show in Corollary \ref{corollary:MVP ergodicity invariant}, the geometric ergodicity of the MVP \eqref{MVP} is able to yield the uniqueness of $\bar\pi$. For a rigorous proof of the existence of $\bar\pi$, the readers may refer to \cite{log_Sobolev,log_Harnack}.
\end{comment}
\subsection{Strong error in finite time}
To estimate the strong error between the IPS \eqref{IPS} in $\mathbb R^{Nd}$ and the MVP \eqref{MVP} in $\mathbb R^d$, we need to define the synchronous coupling between \eqref{IPS}\eqref{MVP}. Given the initial distribution $\nu\in\P(\mathbb R^d)$ and $N$ independent Wiener processes $\{W_t^i\}_{i=1}^N$, the strong solution to the IPS \eqref{IPS} is
\begin{equation}
	X_t^i = X_0^i + 
	\int_0^t
	\bigg(b(X_s^i) + \frac1{N-1}
	\sum_{j\neq i} K(X_s^i - X_s^j)\bigg)\d s + \sigma W_t^i,~~~~
	i=1,\cdots,N,
	\label{strong solution IPS}
\end{equation}
where the initial value $\{X_0^i\}_{i=1}^N$ are sampled from $\nu$ independently. Introduce $N$ duplicates of the MVP \eqref{MVP} represented by $\{\bar X_t^i\}_{i=1}^N$, where each $\bar X_t^i$ is the strong solution to the SDE
\begin{equation}
	\bar X_t^i = X_0^i + 
	\int_0^t \bigg(
		b(\bar X_s^i) + 
		(K*\Law(\bar X_s^i))(\bar X_s^i)
	\bigg)\d s + \sigma\d W_t^i,~~~~
	i=1,\cdots,N.
	\label{strong solution MVP}
\end{equation}
Here, `$*$' denotes the convolution of a density kernel with a probability distribution,
\begin{equation}
	(K*\mu)(x) = \int_{\mathbb R^d} K(x-z) \mu(\d z).
\end{equation}
It can be observed from \eqref{strong solution IPS}\eqref{strong solution MVP} that each $\bar X_t^i$ uses the same initial value $X_0^i\sim \nu$ and the same Wiener process $W_t^i$ with $X_t^i$. The major difference between \eqref{strong solution IPS}\eqref{strong solution MVP} is that the particles in $\{X_t^i\}_{i=1}^N$ are interacting with each other, while the particles in $\{\bar X_t^i\}_{i=1}^N$ are fully decoupled, i.e., the evolution of the $N$ particles in $\{\bar X_t^i\}_{i=1}^N$ is mutually independent.

The estimate of the strong error between \eqref{strong solution IPS}\eqref{strong solution MVP} is a classical topic in the theory of propagation of chaos. The first known result was derived by McKean \cite{McKean} and is stated as follows.
\begin{theorem}
\label{theorem:McKean}
Under Assumption \ref{assumption:bounded}, there exists a constant $C = C(L_0,L_1,T,\sigma)$ such that
\begin{equation}
	\frac1N\sum_{i=1}^N
	\mathbb E\Big[\sup_{t\Le T}
	|X_t^i - \bar X_t^i|^2
	\Big] \Le \frac{C}{N}.
\end{equation}
\end{theorem}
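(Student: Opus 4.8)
The plan is a synchronous coupling together with a Gr\"onwall estimate, in the spirit of McKean's original argument. Since \eqref{strong solution IPS} and \eqref{strong solution MVP} are driven by the same Wiener processes $\{W_t^i\}_{i=1}^N$ and the same initial data $X_0^i$, the noise increments and the initial values cancel in the difference $e_t^i:=X_t^i-\bar X_t^i$, leaving the pathwise identity
\begin{equation*}
	e_t^i = \int_0^t \bigg( b(X_s^i)-b(\bar X_s^i) + \frac1{N-1}\sum_{j\neq i}K(X_s^i-X_s^j) - (K*\nu_s)(\bar X_s^i)\bigg)\d s,
\end{equation*}
where $\nu_s = \Law(\bar X_s^i)$ does not depend on $i$. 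In particular $e_t^i$ has finite variation and involves no stochastic integral, which is why no moment assumption on the initial law is needed: every coefficient difference below is controlled purely by the Lipschitz bounds in Assumption \ref{assumption:bounded}.

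Next I would split the interaction difference as $A_s^i + B_s^i$ with
\begin{equation*}
	A_s^i = \frac1{N-1}\sum_{j\neq i}\big(K(X_s^i-X_s^j)-K(\bar X_s^i-\bar X_s^j)\big),\qquad
	B_s^i = \frac1{N-1}\sum_{j\neq i}\big(K(\bar X_s^i-\bar X_s^j)-(K*\nu_s)(\bar X_s^i)\big).
\end{equation*}
For the coupling part $A_s^i$, the bound $|\nabla K|\Le L_1$ gives $|A_s^i|\Le L_1|e_s^i| + \frac{L_1}{N-1}\sum_{j\neq i}|e_s^j|$, and $|\nabla b|\Le L_0$ gives $|b(X_s^i)-b(\bar X_s^i)|\Le L_0|e_s^i|$. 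The fluctuation part $B_s^i$ is the crux, and the only place where propagation of chaos genuinely enters: conditionally on $\bar X_s^i$, the summands $K(\bar X_s^i-\bar X_s^j)-(K*\nu_s)(\bar X_s^i)$ for $j\neq i$ are independent (the decoupled copies $\bar X^j$ are mutually independent and each has law $\nu_s$), each bounded by $2L_1$, and have conditional mean zero since $\mathbb E[K(\bar X_s^i-\bar X_s^j)\mid\bar X_s^i]=(K*\nu_s)(\bar X_s^i)$; hence $\mathbb E|B_s^i|^2\Le 4L_1^2/(N-1)$. This estimate is short, but it is where the $O(1/N)$ rate is produced.

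Finally I would close the estimate by Gr\"onwall. For $t\Le T$, Cauchy--Schwarz applied to the integral identity gives $\sup_{r\Le t}|e_r^i|^2 \Le T\int_0^t (\text{bracket})^2\,\d s$, and the integrand is bounded by $3\big((L_0+L_1)^2|e_s^i|^2 + \frac{L_1^2}{N-1}\sum_{j\neq i}|e_s^j|^2 + |B_s^i|^2\big)$ via $(a+b+c)^2\Le 3(a^2+b^2+c^2)$ and Cauchy--Schwarz on the batch sum. Setting $\phi(t):=\frac1N\sum_{i=1}^N\mathbb E\sup_{r\Le t}|e_r^i|^2$, taking expectations and averaging over $i$, the two coupling contributions reorganize into a constant multiple of $\int_0^t\phi(s)\,\d s$ while $\frac1N\sum_i\mathbb E|B_s^i|^2\Le 4L_1^2/(N-1)$, so
\begin{equation*}
	\phi(t)\Le C_T\int_0^t\phi(s)\,\d s + \frac{C_T}{N},\qquad 0\Le t\Le T.
\end{equation*}
A preliminary crude estimate (replacing $|B_s^i|$ by the deterministic bound $2L_1$) shows $\phi$ is finite, so Gr\"onwall's inequality applies and yields $\phi(T)\Le C(T)/N$, which is the assertion. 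The Gr\"onwall bookkeeping and the Cauchy--Schwarz manipulations are routine; the only substantive point, and the step I would expect to require the most care, is the variance bound on $B_s^i$, which hinges on the mutual independence of the decoupled McKean--Vlasov copies.
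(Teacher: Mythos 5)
Your argument is correct and is essentially the classical Sznitman/McKean synchronous-coupling proof that the paper itself invokes by citation (Theorem 3.1 of \cite{chaos_3}, Proposition 4.2 of \cite{rbme_3}) rather than reproducing: the decomposition into a Lipschitz coupling term and a mean-zero fluctuation term whose conditional variance gives the $O(1/N)$ rate, followed by Gr\"onwall, is exactly the standard route. No gaps.
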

As in the synchronous coupling, the expectation is taken over the Wiener processes $\{W_t^i\}_{i=1}^N$ in the time interval $[0,T]$ and the random variables $\{X_0^i\}_{i=1}^N$.
We note that the IPS \eqref{IPS} in this paper is slightly different from the original setting in \cite{McKean}, where the perturbation force $\gamma^i(x)$ is given by
\begin{equation}
	\gamma^i(x) = \frac1N\sum_{j=1}^N 
	K(x^i - x^j)
	\label{McKean setting 1}
\end{equation}
rather than
\begin{equation}
	\gamma^i(x) = \frac1{N-1}\sum_{j\neq i}
	K(x^i - x^j).
	\label{McKean setting 2}
\end{equation}
This minor difference in the choice of $\gamma^i$ does not impact the final result of propagation of chaos. The proof of Theorem \ref{theorem:McKean} under the settings \eqref{McKean setting 1}\eqref{McKean setting 2} can be found in Theorem 3.1 of \cite{chaos_3} and Proposition 4.2 in \cite{rbme_3} respectively.

Combining Theorems \ref{theorem:IPS strong} and \ref{theorem:McKean}, we directly obtain the strong error of the discrete IPS \eqref{d-IPS}.
\begin{corollary}
\label{corollary:IPS strong}
Under Assumption \ref{assumption:bounded}, if there exists a constant $M_2$ such that
$$
	\int_{\mathbb R^d}
	|x|^2\nu(\d x)
	\Le M_2,
$$
then there exist constants $C_1=C_1(L_0,L_1,M_2,T,\sigma)$ and $C_2 = C_2(L_0,L_1,T,\sigma)$ such that
\begin{equation}
	\sup_{0\Le n\Le T/\tau}\bigg(
	\frac1N\sum_{i=1}^N
	\mathbb E|\bar X^i_{n\tau} - \tilde X^i_n|^2\bigg) \Le C_1\tau + \frac{C_2}{N}.
	\label{MVP strong error 1}
\end{equation}
\end{corollary}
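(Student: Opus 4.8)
The plan is to insert the $N$-particle IPS \eqref{IPS} as an intermediate object between the $N$ decoupled copies of the MVP and the discrete IPS \eqref{d-IPS}, and then apply the triangle inequality together with the two error estimates already available: the propagation-of-chaos bound of Theorem \ref{theorem:McKean} and the finite-time strong error of Theorem \ref{theorem:IPS strong}.

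First I would fix the coupling. Sample $\{X_0^i\}_{i=1}^N$ independently from $\nu$ and fix $N$ independent Wiener processes $\{W_t^i\}_{i=1}^N$; let $\{X_t^i\}$ solve the IPS \eqref{strong solution IPS}, let $\{\bar X_t^i\}$ solve the decoupled MVP copies \eqref{strong solution MVP}, and let $\{\tilde X_n^i\}$ be the discrete IPS \eqref{d-IPS} started from $\tilde X_0^i = X_0^i$ and driven by the increments $W_{t_{n+1}}^i - W_{t_n}^i$. This is exactly the synchronous coupling under which both Theorem \ref{theorem:McKean} and Theorem \ref{theorem:IPS strong} are stated, so the same realizations of $X_t^i$ appear in both. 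Then, for each $i$ and each integer $n$ with $0\le n\le T/\tau$, apply $|a-b|^2\le 2|a-c|^2 + 2|c-b|^2$ with $a=\bar X_{n\tau}^i$, $c = X_{n\tau}^i$, $b = \tilde X_n^i$, take expectations, average over $i$, and take the supremum over $n$:
\[
\sup_{0\le n\le T/\tau}\frac1N\sum_{i=1}^N \E|\bar X_{n\tau}^i - \tilde X_n^i|^2 \le 2\sup_{0\le n\le T/\tau}\frac1N\sum_{i=1}^N \E|\bar X_{n\tau}^i - X_{n\tau}^i|^2 + 2\sup_{0\le n\le T/\tau}\frac1N\sum_{i=1}^N \E|X_{n\tau}^i - \tilde X_n^i|^2.
\]

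For the first term I would pass from the discrete-time supremum to the continuous-time one, using $\sup_{0\le n\le T/\tau}\frac1N\sum_{i=1}^N\E|\bar X_{n\tau}^i - X_{n\tau}^i|^2 \le \frac1N\sum_{i=1}^N\E\big[\sup_{t\le T}|\bar X_t^i - X_t^i|^2\big]$, and then invoke Theorem \ref{theorem:McKean} (whose only hypothesis is Assumption \ref{assumption:bounded}) to bound it by $C_2/N$ with $C_2 = C_2(L_0,L_1,T,\sigma)$. For the second term, note that $\max_{1\le i\le N}\E|X_0^i|^2 = \int_{\mathbb R^d}|x|^2\nu(\d x)\le M_2$, so Theorem \ref{theorem:IPS strong} applies and yields the bound $C_1\tau$ with $C_1 = C_1(L_0,L_1,M_2,T,\sigma)$. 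Adding the two contributions and absorbing the factors of $2$ into $C_1,C_2$ gives \eqref{MVP strong error 1}.

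There is no genuine obstacle here: the statement is a routine consequence of the triangle inequality once the two component estimates are in place. The only points needing a moment's care are (i) checking that the hypotheses of Theorems \ref{theorem:McKean} and \ref{theorem:IPS strong} are met under the present assumptions, in particular that the second-moment bound on $\nu$ transfers to $\max_i \E|X_0^i|^2\le M_2$; and (ii) ensuring the three dynamics are realized on a common probability space with matching initial data and Wiener processes, which is precisely the synchronous coupling in which the two cited theorems are phrased — the harmless discrepancy between the $\frac1{N-1}$ and $\frac1N$ normalizations of $\gamma^i$ having already been addressed in the remark following Theorem \ref{theorem:McKean}.
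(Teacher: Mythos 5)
Your proof is correct and follows exactly the route the paper intends: the paper justifies this corollary with the single sentence ``Combining Theorems \ref{theorem:IPS strong} and \ref{theorem:McKean}, we directly obtain the strong error,'' which is precisely your synchronous coupling plus the triangle inequality $|a-b|^2\le 2|a-c|^2+2|c-b|^2$ through the intermediate IPS trajectory. The hypothesis checks you flag (the second moment of $\nu$ transferring to $\max_i\mathbb E|X_0^i|^2\le M_2$, and the common driving noise and initial data) are exactly the right ones.
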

Combining Theorems \ref{theorem:strong error} and \ref{theorem:McKean}, we obtain the strong error of the discrete RB--IPS \eqref{d-RB-IPS}.
\begin{corollary}
\label{corollary:RB-IPS strong}
Under Assumption \ref{assumption:bounded}, if there exists a constant $M_4$ such that
$$
	\int_{\mathbb R^d}
	|x|^4\nu(\d x)
	\Le M_4,
$$
then there exist constants $C_1=C_1(L_0,L_1,M_4,T,\sigma)$ and $C_2 = C_2(L_0,L_1,T,\sigma)$ such that
\begin{equation}
	\sup_{0\Le n\Le T/\tau}\bigg(
	\frac1N\sum_{i=1}^N
	\mathbb E|\bar X^i_{n\tau} - \tilde Y^i_n|^2\bigg) \Le C_1\tau + \frac{C_2}{N}.
	\label{MVP strong error 2}
\end{equation}
\end{corollary}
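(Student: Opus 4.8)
The plan is to obtain Corollary~\ref{corollary:RB-IPS strong} from Theorem~\ref{theorem:strong error} and Theorem~\ref{theorem:McKean} by a single triangle inequality in the $L^2$ norm, in complete parallel with the way Corollary~\ref{corollary:IPS strong} is derived from Theorem~\ref{theorem:IPS strong} and Theorem~\ref{theorem:McKean}. First I would fix a common probability space: sample $\{X_0^i\}_{i=1}^N$ i.i.d.\ from $\nu$, draw $N$ independent Wiener processes $\{W_t^i\}_{i=1}^N$, and draw the random batch divisions used at each time step independently of everything else. Running all three dynamics on this space --- the IPS \eqref{IPS} through its strong solution \eqref{strong solution IPS}, its $N$ decoupled MVP duplicates $\{\bar X_t^i\}_{i=1}^N$ given by \eqref{strong solution MVP}, and the discrete RB--IPS \eqref{d-RB-IPS} initialised at $\tilde Y_0^i = X_0^i$ --- places $\{X_t^i\}$ and $\{\bar X_t^i\}$ in exactly the synchronous coupling of Theorem~\ref{theorem:McKean}, and $\{X_{n\tau}^i\}$ and $\{\tilde Y_n^i\}$ in the synchronous coupling of Theorem~\ref{theorem:strong error}; the batch divisions are auxiliary randomness that only affects $\{\tilde Y_n^i\}$, so both couplings are consistent on this single space.

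Next, for every $n$ with $0\Le n\Le T/\tau$ I would apply the elementary bound $|a-b|^2\Le 2|a-c|^2+2|c-b|^2$ pointwise with $a=\bar X_{n\tau}^i$, $c=X_{n\tau}^i$, $b=\tilde Y_n^i$, take expectations, average over $i$, and pass to the supremum over $n$:
\begin{equation*}
\sup_{0\Le n\Le T/\tau}\frac1N\sum_{i=1}^N \mathbb E|\bar X_{n\tau}^i-\tilde Y_n^i|^2
\Le 2\sup_{0\Le n\Le T/\tau}\frac1N\sum_{i=1}^N\mathbb E|\bar X_{n\tau}^i-X_{n\tau}^i|^2
+2\sup_{0\Le n\Le T/\tau}\frac1N\sum_{i=1}^N\mathbb E|X_{n\tau}^i-\tilde Y_n^i|^2.
\end{equation*}
The second term is bounded by $2C\tau$ by Theorem~\ref{theorem:strong error}: its hypothesis $\max_{1\Le i\Le N}\mathbb E|X_0^i|^4\Le M_4$ holds because $X_0^i\sim\nu$ and $\int_{\mathbb R^d}|x|^4\nu(\d x)\Le M_4$, and the constant there depends only on $L_0,L_1,M_4,T,\sigma$. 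For the first term, since the discrete sampling times $\{n\tau : 0\Le n\Le T/\tau\}$ form a subset of $[0,T]$,
\begin{equation*}
\sup_{0\Le n\Le T/\tau}\frac1N\sum_{i=1}^N\mathbb E|\bar X_{n\tau}^i-X_{n\tau}^i|^2
\Le \frac1N\sum_{i=1}^N\mathbb E\Big[\sup_{t\Le T}|X_t^i-\bar X_t^i|^2\Big]\Le \frac{C}{N}
\end{equation*}
by Theorem~\ref{theorem:McKean}, with a constant depending only on $L_0,L_1,T,\sigma$. Collecting the two estimates and relabelling the constants gives \eqref{MVP strong error 2} with $C_1=C_1(L_0,L_1,M_4,T,\sigma)$ and $C_2=C_2(L_0,L_1,T,\sigma)$.

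There is essentially no substantive obstacle in this argument, which is why it is phrased as a corollary: the only points requiring attention are bookkeeping ones. One must check that the two input theorems are invoked on the \emph{same} coupled system, that their hypotheses are mutually compatible --- Theorem~\ref{theorem:strong error} needs a finite fourth moment for the (here product) initial law $\nu$, which is assumed --- and, most importantly, that neither the constant $C_1$ coming from Theorem~\ref{theorem:strong error} nor the constant $C_2$ coming from Theorem~\ref{theorem:McKean} depends on $N$, $\tau$, or the batch size $p$, so that the resulting bound is uniform in these parameters as claimed. The factor $2$ produced by the elementary inequality is harmless and is absorbed into $C_1,C_2$.
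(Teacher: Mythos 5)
Your argument is correct and coincides with the paper's intended derivation: the paper obtains Corollary~\ref{corollary:RB-IPS strong} precisely by combining Theorem~\ref{theorem:strong error} and Theorem~\ref{theorem:McKean} via the triangle inequality under the synchronous coupling, exactly as you do. The bookkeeping points you flag (common coupling, fourth-moment hypothesis, uniformity of the constants in $N$, $\tau$, $p$) are the only substantive checks, and you handle them correctly.
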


In the $\W_2$ distance, the finite-time error of the discrete IPS \eqref{d-IPS} and the discrete RB--IPS \eqref{d-RB-IPS} is estimated as follows.
\begin{corollary}
\label{corollary:MVP finite error}
Under Assumption \ref{assumption:bounded}, if there exists a constant $M_4$ such that
$$
	\int_{\mathbb R^d} |x|^4\nu(\d x)\Le M_4,
$$
then there exists constant $C_1=C_1(L_0,L_1,M_4,T,\sigma)$ and $C_2 = C_2(L_0,L_1,T,\sigma)$ such that
\begin{equation}
\max\bigg\{
\sup_{0\Le n\Le T/\tau}
\W_2(\nu^{\otimes N} \bar p_{n\tau}^{\otimes N},
\nu^{\otimes N} \tilde p_{n\tau}),
\sup_{0\Le n\Le T/\tau}
\W_2(\nu^{\otimes N} \bar p_{n\tau}^{\otimes N},
\nu^{\otimes N} \tilde q_{n\tau}) 
\bigg\}	
\Le C_1\sqrt{\tau} + \frac{C_2}{\sqrt{N}}.
\end{equation}
\end{corollary}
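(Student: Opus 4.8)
The plan is to read off the $\W_2$ estimate directly from the pathwise strong error bounds of Corollaries \ref{corollary:IPS strong} and \ref{corollary:RB-IPS strong}, using the synchronous coupling as an admissible transport plan. First I would recall that, by the definition \eqref{Wasserstein} of the normalized Wasserstein-$2$ distance, whenever $Z,\tilde Z$ are random variables on $\mathbb R^{Nd}$ realised on a common probability space with $\Law(Z)=\mu$ and $\Law(\tilde Z)=\tilde\mu$, the joint law of $(Z,\tilde Z)$ belongs to $\Pi(\mu,\tilde\mu)$, so that
\[
\W_2^2(\mu,\tilde\mu)\Le \frac1N\sum_{i=1}^N \mathbb E|Z^i-\tilde Z^i|^2 .
\]
Thus it suffices to bound the right-hand side along the synchronous coupling.

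Next I would set up the coupling: fix $\nu\in\P(\mathbb R^d)$, take $\{X_0^i\}_{i=1}^N$ i.i.d.\ with law $\nu$ and independent Wiener processes $\{W_t^i\}_{i=1}^N$, and let $\{\bar X_t^i\}$ be the $N$ decoupled MVP duplicates \eqref{strong solution MVP}, $\{\tilde X_n^i\}$ the discrete IPS \eqref{d-IPS}, and $\{\tilde Y_n^i\}$ the discrete RB--IPS \eqref{d-RB-IPS}, all driven by this same initial data and Wiener processes (with, for the discrete RB--IPS, the same independently sampled batch divisions at each step). Since the duplicates are mutually independent, $\Law(\bar X_{n\tau}^1,\dots,\bar X_{n\tau}^N)=\nu^{\otimes N}\bar p_{n\tau}^{\otimes N}$, while $\Law(\tilde X_n)=\nu^{\otimes N}\tilde p_{n\tau}$ and $\Law(\tilde Y_n)=\nu^{\otimes N}\tilde q_{n\tau}$, so the joint realisations are couplings of these laws. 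Because finite fourth moments imply finite second moments, e.g.\ $\int_{\mathbb R^d}|x|^2\nu(\d x)\Le 1+\int_{\mathbb R^d}|x|^4\nu(\d x)\Le 1+M_4$, Corollary \ref{corollary:IPS strong} applies with $M_2=1+M_4$ and yields constants $C_1=C_1(L_0,L_1,M_4,T,\sigma)$, $C_2=C_2(L_0,L_1,T,\sigma)$ with
\[
\sup_{0\Le n\Le T/\tau}\W_2^2(\nu^{\otimes N}\bar p_{n\tau}^{\otimes N},\nu^{\otimes N}\tilde p_{n\tau})
\Le \sup_{0\Le n\Le T/\tau}\frac1N\sum_{i=1}^N\mathbb E|\bar X_{n\tau}^i-\tilde X_n^i|^2\Le C_1\tau+\frac{C_2}{N},
\]
and Corollary \ref{corollary:RB-IPS strong} gives the same bound with $\tilde X_n,\tilde p_{n\tau}$ replaced by $\tilde Y_n,\tilde q_{n\tau}$. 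I would then take square roots and use $\sqrt{a+b}\Le\sqrt a+\sqrt b$ for $a,b\Ge0$ to pass to $\sqrt{C_1}\sqrt\tau+\sqrt{C_2}/\sqrt N$, and relabel the constants to obtain the stated bound.

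As for the main obstacle: there is essentially no hard step, since all of the substantive work—the $O(\tau)$ time-discretisation error, the $O(\tau/(p-1))$ random-batch error, and the $O(1/N)$ propagation-of-chaos error—has already been done in Theorems \ref{theorem:strong error} and \ref{theorem:McKean} and packaged in Corollaries \ref{corollary:IPS strong} and \ref{corollary:RB-IPS strong}. The only points requiring a little care are verifying that the synchronous coupling is genuinely an element of $\Pi(\cdot,\cdot)$ (immediate from the construction on a common probability space), the moment promotion from $M_4$ to $M_2$, and tracking which constants depend on $N$—only the coefficient of $1/\sqrt N$ does, the coefficient of $\sqrt\tau$ being $N$-independent, exactly as claimed in Corollary \ref{corollary:MVP finite error}.
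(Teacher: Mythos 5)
Your proposal is correct and is exactly the argument the paper intends: the synchronous coupling (same initial data, Wiener processes, and batch divisions) is an admissible transport plan, so the normalized $\W_2^2$ distance is dominated by the strong errors of Corollaries \ref{corollary:IPS strong} and \ref{corollary:RB-IPS strong}, and taking square roots with $\sqrt{a+b}\Le\sqrt a+\sqrt b$ gives the stated bound, with the moment promotion $M_2\Le 1+M_4$ handled as you describe. (One small slip in your closing remark: neither constant depends on $N$; the $N$-dependence sits entirely in the explicit $1/\sqrt N$ factor.)
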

Here, $\nu^{\otimes N}\in\P(\mathbb R^{Nd})$ denotes the tensor product of the distribution $\nu\in\P(\mathbb R^d)$, and $\bar p_t^{\otimes N}$ denotes the product of $\bar p_t$ in $\mathbb R^{Nd}$. Recall that the $N$ duplicates $\{\bar X_t^i\}_{i=1}^N$ of the MVP \eqref{MVP} are mutually independent, hence $\nu^{\otimes N} \bar p_{n\tau}^{\otimes N} = (\nu \bar p_{n\tau})^{\otimes N}$.

Let $[\mu]_1\in\P(\mathbb R^d)$ denote the marginal distribution of a symmetric distribution $\mu\in\P(\mathbb R^{Nd})$ (see Definition 2.1 in \cite{chaos_3}). Note that \eqref{MVP strong error 1}\eqref{MVP strong error 2} can be written as
\begin{equation}
	\max\bigg\{
	\sup_{0\Le n\Le T/\tau}
	\mathbb E|\bar X_{n\tau}^1 - \tilde X_n^1|^2
	\sup_{0\Le n\Le T/\tau}
	\mathbb E|\bar X_{n\tau}^1 - \tilde Y_n^1|^2
	\bigg\}
	\Le C_1\tau + \frac{C_2}{N},
\end{equation}
hence in the sense of marginal distributions we have the following.
\begin{corollary}
\label{corollary:MVP finite error marginal}
Under Assumption \ref{assumption:bounded}, if there exists a constant $M_4$ such that
$$
	\int_{\mathbb R^d} |x|^4\nu(\d x)\Le M_4,
$$
then there exist constants $C_1=C_1(L_0,L_1,M_4,T,\sigma)$ and $C_2 = C_2(L_0,L_1,T,\sigma)$ such that
\begin{equation}
\max\bigg\{
\sup_{0\Le n\Le T/\tau}
\W_2\big(\nu \bar p_{n\tau},
[\nu^{\otimes N} \tilde p_{n\tau}]_1\big),
\sup_{0\Le n\Le T/\tau}
\W_2\big(\nu \bar p_{n\tau},
[\nu^{\otimes N} \tilde q_{n\tau}]_1\big) 
\bigg\}	
\Le C_1\sqrt{\tau} + \frac{C_2}{\sqrt{N}}.
\end{equation}
\end{corollary}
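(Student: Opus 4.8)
The plan is to read the corollary off the single-particle strong error estimates displayed immediately above it, together with the elementary fact that the Wasserstein-$2$ distance between two laws on $\mathbb R^d$ is bounded by the second moment of \emph{any} coupling of them. First I would fix $n$ with $0\Le n\Le T/\tau$ and work with the synchronous coupling already used to prove Corollaries \ref{corollary:IPS strong} and \ref{corollary:RB-IPS strong}: the initial values $\{X_0^i\}_{i=1}^N$ are i.i.d.\ with law $\nu$, so $\Law(\bar X_{n\tau}^1) = \nu\bar p_{n\tau}$, $\Law(\tilde X_n^1) = [\nu^{\otimes N}\tilde p_{n\tau}]_1$ and $\Law(\tilde Y_n^1) = [\nu^{\otimes N}\tilde q_{n\tau}]_1$. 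Hence the joint law of $(\bar X_{n\tau}^1,\tilde X_n^1)$ is an admissible transport plan between $\nu\bar p_{n\tau}$ and $[\nu^{\otimes N}\tilde p_{n\tau}]_1$, which gives
$$
\W_2^2\big(\nu\bar p_{n\tau},[\nu^{\otimes N}\tilde p_{n\tau}]_1\big)\Le \mathbb E|\bar X_{n\tau}^1-\tilde X_n^1|^2,
$$
and likewise with $\tilde Y_n^1$ and $\tilde q_{n\tau}$ on the right. (Alternatively, the same bound follows from Corollary \ref{corollary:MVP finite error} by noting that projecting an optimal plan onto a single coordinate pair contracts the normalized $\W_2$ to the marginal $\W_2$; I would present the direct coupling argument since it matches the one-particle reformulation stated just before the corollary.)

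Next I would remove the dependence on the particle index $1$. Since the $N$ duplicates of the MVP \eqref{MVP} and the discrete IPS \eqref{d-IPS} (respectively the discrete RB--IPS \eqref{d-RB-IPS}) are driven by i.i.d.\ initial data, independent Wiener processes and symmetric random batch divisions, the coupled system is exchangeable in $i$, so $\mathbb E|\bar X_{n\tau}^i-\tilde X_n^i|^2$ is independent of $i$ and equals $\frac1N\sum_{i=1}^N\mathbb E|\bar X_{n\tau}^i-\tilde X_n^i|^2$. Applying Corollary \ref{corollary:IPS strong} (respectively Corollary \ref{corollary:RB-IPS strong})---whose fourth-moment hypothesis on $\nu$ is precisely the one assumed here, and which trivially contains the second-moment hypothesis---then yields
$$
\sup_{0\Le n\Le T/\tau}\W_2^2\big(\nu\bar p_{n\tau},[\nu^{\otimes N}\tilde p_{n\tau}]_1\big)\Le C_1\tau+\frac{C_2}{N},
$$
and the analogous bound with $\tilde q_{n\tau}$, with $C_1=C_1(L_0,L_1,M_4,T,\sigma)$ and $C_2=C_2(L_0,L_1,T,\sigma)$.

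Finally, taking square roots and using $\sqrt{a+b}\Le\sqrt a+\sqrt b$ for $a,b\Ge0$ turns $C_1\tau+C_2/N$ into $\sqrt{C_1}\sqrt\tau+\sqrt{C_2}/\sqrt N$, which after renaming the constants is exactly the claimed bound; taking the maximum over the two cases (discrete IPS and discrete RB--IPS) finishes the proof. There is no genuine analytic obstacle here: everything reduces to the finite-time strong error estimates already proved. The only point requiring a little care is the bookkeeping that $[\,\cdot\,]_1$ indeed extracts the law of the first particle, that the synchronous coupling furnishes a legitimate transport plan between the two $\mathbb R^d$-marginals, and that exchangeability lets one pass between the single-particle and the normalized-average quantities; once these are granted, the statement is immediate.
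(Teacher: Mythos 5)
Your proposal is correct and matches the paper's (largely implicit) argument: the paper likewise rewrites the strong error bounds of Corollaries \ref{corollary:IPS strong} and \ref{corollary:RB-IPS strong} as single-particle expectations via exchangeability, uses the synchronous coupling as a transport plan between $\nu\bar p_{n\tau}$ and the first marginal, and takes square roots. No substantive difference in approach.
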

In Corollary \ref{corollary:MVP finite error marginal}, $\nu \bar p_{n\tau}$ is the distribution law of the MVP \eqref{MVP} and does not depend on $N$. Hence Corollary \ref{corollary:MVP finite error marginal} implies that we can obtain the correct distribution law $\nu\bar p_{n\tau} = \Law(\bar X_{n\tau})$ by choosing $N$ sufficiently large and $\tau$ sufficiently small.
\subsection{Geometric ergodicity of MVP}
It has been proved that when the interaction force $K$ is moderately large, the IPS \eqref{IPS} has a convergence rate $\beta$ uniform in the number of particles $N$.
Since the MVP \eqref{MVP} is the mean-field limit of the IPS \eqref{IPS}, it is natural to expect that the MVP \eqref{MVP} also has the convergence rate $\beta$. In fact, the geometric ergodicity of the MVP \eqref{MVP} can be directly from Theorem \ref{theorem:IPS ergodicity}.
\begin{theorem}
\label{theorem:MVP ergodicity}
Under Assumptions \ref{assumption:bounded} and \ref{assumption:kappa}, if the constant $L_1$ in \eqref{boundedness 1} satisfies
$$
	L_1 < \frac{c_0\vp_0\sigma^2}{16},
$$
then for $\beta:=c_0\sigma^2/2$ there exist a positive constant $C = C(\kappa,\sigma)$ such that
\begin{equation}
	\W_1(\mu \bar p_t,\nu \bar p_t) \Le Ce^{-\beta t} \W_1(\mu,\nu),~~~~
	\forall t\Ge0
\end{equation}
for any probability distributions $\mu,\nu\in\P_1(\mathbb R^d)$.
\end{theorem}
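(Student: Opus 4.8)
The plan is to deduce the geometric ergodicity of the nonlinear semigroup $(\bar p_t)_{t\Ge0}$ from the \emph{uniform-in-$N$} geometric ergodicity of the linear IPS semigroup $(p_t)_{t\Ge0}$ established in Theorem \ref{theorem:IPS ergodicity}, using McKean's propagation-of-chaos estimate (Theorem \ref{theorem:McKean}) to pass to the mean-field limit. Fix $\mu,\nu\in\P_1(\mathbb R^d)$ and $t\Ge0$. For each integer $N\Ge2$ run the IPS \eqref{IPS} from the product initial data $\mu^{\otimes N}$ and $\nu^{\otimes N}$ (note $\mu,\nu\in\P_1(\mathbb R^d)$ implies $\mu^{\otimes N},\nu^{\otimes N}\in\P_1(\mathbb R^{Nd})$, so Theorem \ref{theorem:IPS ergodicity} applies), and run the $N$ decoupled duplicates of the MVP \eqref{MVP} from the same initial data and Wiener processes.

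First I would record two elementary facts about the normalized Wasserstein-1 distance. Using the $N$-fold product of an optimal coupling of $(\mu,\nu)$ one gets $\W_1(\mu^{\otimes N},\nu^{\otimes N})\Le\W_1(\mu,\nu)$. Moreover, since $\mu^{\otimes N}p_t$ and $\nu^{\otimes N}p_t$ are symmetric, for any coupling $\Gamma$ of these two laws each $(x^i,y^i)$-marginal of $\Gamma$ couples the $i$-th marginals, which all coincide with $[\mu^{\otimes N}p_t]_1$ and $[\nu^{\otimes N}p_t]_1$ respectively; averaging over $i$ gives $\W_1([\mu^{\otimes N}p_t]_1,[\nu^{\otimes N}p_t]_1)\Le\W_1(\mu^{\otimes N}p_t,\nu^{\otimes N}p_t)$. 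Combining these two bounds with Theorem \ref{theorem:IPS ergodicity} (whose rate $\beta=c_0\sigma^2/2$ and constant $C=C(\kappa,\sigma)$ do not depend on $N$) yields
\[
  \W_1\big([\mu^{\otimes N}p_t]_1,[\nu^{\otimes N}p_t]_1\big)\Le Ce^{-\beta t}\,\W_1(\mu,\nu).
\]

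Next I would close the gap between the particle marginals and the genuine MVP laws. Since the $N$ duplicates of the MVP \eqref{MVP} are exchangeable, Theorem \ref{theorem:McKean} applied on $[0,t]$ gives $\mathbb E|X_t^1-\bar X_t^1|^2\Le C'/N$ with $C'=C'(L_0,L_1,t,\sigma)$ independent of $N$, hence $\W_2([\mu^{\otimes N}p_t]_1,\mu\bar p_t)\Le\sqrt{C'/N}$ and likewise for $\nu$. Using $\W_1\Le\W_2$ and the triangle inequality applied to $\mu\bar p_t,\ [\mu^{\otimes N}p_t]_1,\ [\nu^{\otimes N}p_t]_1,\ \nu\bar p_t$ we obtain
\[
  \W_1(\mu\bar p_t,\nu\bar p_t)\Le Ce^{-\beta t}\,\W_1(\mu,\nu)+2\sqrt{C'/N}.
\]
Letting $N\to\infty$ with $t,\mu,\nu$ fixed (so that $C'$ stays bounded) removes the $O(1/\sqrt N)$ term and gives the stated estimate. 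A preliminary observation that $\mu\bar p_t\in\P_1(\mathbb R^d)$, so that the left-hand side is finite, follows either from standard moment propagation for the MVP \eqref{MVP} (linear growth of $b$, bounded $K$) or from the uniform first-moment bound on $[\mu^{\otimes N}p_t]_1$ combined with the $\W_2$ convergence just established.

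All the individual steps are routine bookkeeping; the one point that must be handled with care — and the reason the argument succeeds — is that \emph{both} auxiliary inputs are genuinely uniform in $N$: the exponential rate $\beta$ in Theorem \ref{theorem:IPS ergodicity} and the constant in McKean's estimate Theorem \ref{theorem:McKean} do not degrade as $N\to\infty$. This is exactly what the reflection-coupling proof behind Theorem \ref{theorem:IPS ergodicity} and the global boundedness of $K$ in Assumption \ref{assumption:bounded} guarantee, so beyond invoking these facts in the right order there is no substantial obstacle.
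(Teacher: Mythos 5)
Your proposal is correct and follows essentially the same route as the paper: invoke the uniform-in-$N$ contraction of Theorem \ref{theorem:IPS ergodicity} on product initial data, transfer to the MVP laws via McKean's estimate (Theorem \ref{theorem:McKean}) and the triangle inequality, and let $N\to\infty$ at fixed $t$. The only cosmetic difference is that you project to the first marginal using exchangeability, whereas the paper works directly with the identity $\W_1(\mu\bar p_t,\nu\bar p_t)=\W_1(\mu^{\otimes N}\bar p_t^{\otimes N},\nu^{\otimes N}\bar p_t^{\otimes N})$ for the normalized distance; both steps are equivalent bookkeeping.
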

The proof of Theorem \ref{theorem:MVP ergodicity} is Appendix. As a consequence, we deduce that the MVP \eqref{MVP} has a unique invariant distribution $\bar\pi\in\P_1(\mathbb R^d)$. Also, the $\W_1$ distance between $\pi$ and $\bar\pi$ can be controlled.
\begin{corollary}
\label{corollary:MVP ergodicity invariant}
Under Assumptions \ref{assumption:bounded} and \ref{assumption:kappa}, if the constant $L_1$ in \eqref{boundedness 1} satisfies
$$
	L_1 < \frac{c_0\vp_0\sigma^2}{16},
$$
then the invariant distribution $\bar\pi\in\P_1(\mathbb R^d)$ of the MVP \eqref{MVP} is unique, and for $\beta:=c_0\sigma^2/2$ there exists a positive constant $C = C(\kappa,\sigma)$ such that
\begin{equation}
	\W_1(\nu \bar p_t,\bar\pi) \Le Ce^{-\beta t} \W_1(\nu,\bar\pi),~~~~
	\forall t\Ge0.
\end{equation}
for any $\nu\in\P_1(\mathbb R^d)$. Furthermore, there exists a constant $C = C(\kappa,L_0,\sigma)$ such that
\begin{equation}
	\W_1(\bar\pi^{\otimes N},\pi) \Le \frac C{\sqrt{N}}.
	\label{W1 uniform-in-time}
\end{equation}
\end{corollary}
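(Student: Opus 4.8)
The plan is to obtain all three conclusions from the geometric ergodicity of the MVP (Theorem~\ref{theorem:MVP ergodicity}), combined with the geometric ergodicity of the IPS (Corollary~\ref{corollary:IPS ergodicity invariant}) and McKean's propagation-of-chaos estimate (Theorem~\ref{theorem:McKean}). For the existence and uniqueness of $\bar\pi$, I would fix $t_0>0$ so large that $Ce^{-\beta t_0}\Le\tfrac12$, where $C,\beta$ are the constants of Theorem~\ref{theorem:MVP ergodicity}. A first-moment estimate analogous to Lemma~\ref{lemma:IPS moment} (whose proof only uses $|\gamma^i|\Le L_1$ and the linear growth of $b$) shows that the map $\Phi:\nu\mapsto\nu\bar p_{t_0}$ sends $\P_1(\mathbb R^d)$ into itself, while Theorem~\ref{theorem:MVP ergodicity} shows it is a $\tfrac12$-contraction on the complete metric space $(\P_1(\mathbb R^d),\W_1)$. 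The Banach fixed point theorem then yields a unique fixed point $\bar\pi\in\P_1(\mathbb R^d)$ of $\bar p_{t_0}$. To promote this to invariance under every $\bar p_t$, I would invoke the (nonlinear) semigroup identity $(\bar\pi\bar p_t)\bar p_{t_0}=\bar\pi\bar p_{t_0+t}=(\bar\pi\bar p_{t_0})\bar p_t=\bar\pi\bar p_t$, so that $\bar\pi\bar p_t$ is again a fixed point of $\bar p_{t_0}$ and hence equals $\bar\pi$; since any invariant distribution of the MVP is in particular a fixed point of $\bar p_{t_0}$, uniqueness follows. The geometric ergodicity bound is then immediate: taking $\mu=\bar\pi$ in Theorem~\ref{theorem:MVP ergodicity} and using $\bar\pi\bar p_t=\bar\pi$ gives $\W_1(\nu\bar p_t,\bar\pi)\Le Ce^{-\beta t}\W_1(\nu,\bar\pi)$.

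For the uniform-in-$N$ estimate $\W_1(\bar\pi^{\otimes N},\pi)\Le C/\sqrt N$, I would run the synchronous coupling of the IPS~\eqref{IPS} with $N$ decoupled copies $\{\bar X_t^i\}_{i=1}^N$ of the MVP~\eqref{MVP}, all started from the same initial data sampled from $\bar\pi^{\otimes N}$ (so each $\bar X_0^i\sim\bar\pi$ independently). Because $\bar\pi$ is invariant and the copies remain mutually independent, the law of $(\bar X_t^1,\dots,\bar X_t^N)$ is $\bar\pi^{\otimes N}$ for every $t$, whereas the law of $(X_t^1,\dots,X_t^N)$ is $\bar\pi^{\otimes N}p_t$. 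Theorem~\ref{theorem:McKean} together with the Cauchy--Schwarz inequality gives
\[
\W_1\big(\bar\pi^{\otimes N}p_t,\bar\pi^{\otimes N}\big)\Le\frac1N\sum_{i=1}^N\mathbb E|X_t^i-\bar X_t^i|\Le\sqrt{\frac{C(t)}{N}}.
\]
Combining this with the triangle inequality and the IPS contraction from Corollary~\ref{corollary:IPS ergodicity invariant} yields, for every $t>0$,
\[
\W_1(\bar\pi^{\otimes N},\pi)\Le\W_1\big(\bar\pi^{\otimes N},\bar\pi^{\otimes N}p_t\big)+\W_1\big(\bar\pi^{\otimes N}p_t,\pi p_t\big)\Le\sqrt{\frac{C(t)}{N}}+Ce^{-\beta t}\,\W_1(\bar\pi^{\otimes N},\pi).
\]
Since both $\bar\pi^{\otimes N}$ and $\pi$ have finite first moments (Corollary~\ref{corollary:IPS ergodicity invariant}), $\W_1(\bar\pi^{\otimes N},\pi)$ is finite, so fixing $t=t_0$ with $Ce^{-\beta t_0}\Le\tfrac12$ lets me absorb the last term and conclude $\W_1(\bar\pi^{\otimes N},\pi)\Le 2\sqrt{C(t_0)/N}$, with $C(t_0)$ depending only on $\kappa,L_0,\sigma$.

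The two delicate points are the following. First, the bootstrap from ``$\bar p_{t_0}$-invariance'' to ``$\bar p_t$-invariance for all $t$'' must go through the nonlinear semigroup law, because $\nu\mapsto\nu\bar p_t$ is not linear. Second --- and this is the genuine obstacle --- the constant in McKean's estimate grows with the time horizon, so one cannot send $t\to\infty$ directly in the displayed inequality; instead one freezes a single horizon $t_0$ and lets the exponential contraction of the IPS swallow the residual term. This is exactly the triangle-inequality mechanism of Section~2, now used to quantify convergence in the particle number $N$ rather than in the time step $\tau$.
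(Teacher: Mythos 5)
Your proposal is correct and follows essentially the same route as the paper: a Banach fixed-point argument for $\bar p_{t_0}$ on $(\P_1(\mathbb R^d),\W_1)$ promoted to full invariance via the nonlinear semigroup law, followed by the triangle inequality $\W_1(\bar\pi^{\otimes N},\pi)\Le\W_1(\bar\pi^{\otimes N}\bar p_{t_0}^{\otimes N},\bar\pi^{\otimes N}p_{t_0})+\W_1(\bar\pi^{\otimes N}p_{t_0},\pi p_{t_0})$ with the McKean estimate and the IPS contraction absorbing the residual term at a fixed horizon. Your explicit attention to the finiteness of $\W_1(\bar\pi^{\otimes N},\pi)$ and to why one cannot let $t\to\infty$ in the McKean bound are points the paper leaves implicit, but the argument is the same.
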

The proof of Corollary \ref{corollary:MVP ergodicity invariant} is in Appendix.
We note that \eqref{W1 uniform-in-time} can also viewed as the corollary of the uniform-in-time propagation of chaos, see Theorem 2 in \cite{elementary} for example.
\subsection{Wasserstein-1 error in long time}
Combining Theorem \ref{theorem:IPS sample quality} and Corollary \ref{corollary:MVP ergodicity invariant}, we immediately obtain the following result of long-time sampling error of the discrete IPS \eqref{d-IPS} and the discrete RB--IPS \eqref{d-RB-IPS}.
\begin{theorem}
\label{theorem:MVP sample quality}
Under Assumptions \ref{assumption:bounded} and \ref{assumption:kappa}, if there exists a constant $M_4$ such that
$$
	\max_{1\Le i\Le N}\int_{\mathbb R^{Nd}} 
	|x^i|^4\nu(\d x) \Le M_4,
$$
and the constant $L_1$ in \eqref{boundedness 1} and the time step $\tau$ satisfy
$$
	L_1 < \frac{c_0\vp_0\sigma^2}{16},~~~~
	\tau<\min\bigg\{
	\frac{\alpha}{2L_0^2},\frac1{2\alpha}
	\bigg\},
$$
then there exist positive constants $\lambda = \lambda(\kappa,L_0,\sigma)$, $C_1 = C_1(\kappa,L_0,M_4,\sigma)$ and $C_2 = C_2(\kappa,L_0,\sigma)$
\begin{enumerate}
\item[$1.$] The transition probability $(\tilde p_{n\tau})_{n\Ge0}$ of discrete IPS \eqref{d-IPS} satisfies
\begin{equation}
\W_1(\nu \tilde p_{n\tau},\bar\pi^{\otimes N}) \Le C_1\sqrt{\tau} + C_1e^{-\lambda n\tau} + \frac{C_2}{\sqrt{N}},~~~~
	\forall n\Ge0.
\end{equation}
\item[$2.$] The transition probability $(\tilde q_{n\tau})_{n\Ge0}$ of discrete RB--IPS \eqref{d-RB-IPS} satisfies
\begin{equation}
\W_1(\nu \tilde q_{n\tau},\bar\pi^{\otimes N}) \Le C_1\sqrt{\tau} + C_1e^{-\lambda n\tau} + \frac{C_2}{\sqrt{N}},~~~~
	\forall n\Ge0.
\end{equation}
\end{enumerate}
\end{theorem}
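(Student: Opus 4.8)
The plan is to obtain Theorem \ref{theorem:MVP sample quality} as a direct corollary of two results already in hand: the long-time sampling error of the discrete methods relative to the invariant distribution $\pi$ of the $N$-particle IPS \eqref{IPS} (Theorem \ref{theorem:IPS sample quality}), and the quantitative propagation-of-chaos estimate \eqref{W1 uniform-in-time} in Corollary \ref{corollary:MVP ergodicity invariant}, which controls $\W_1(\bar\pi^{\otimes N},\pi)$. The only additional ingredient is the triangle inequality for the normalized Wasserstein-1 distance on $\P_1(\mathbb R^{Nd})$.

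First I would check that the hypotheses of both cited results hold under the assumptions of the theorem: Assumptions \ref{assumption:bounded} and \ref{assumption:kappa} are assumed, the smallness condition $L_1 < c_0\vp_0\sigma^2/16$ and the time-step restriction $\tau < \min\{\alpha/(2L_0^2),1/(2\alpha)\}$ are precisely those of Theorem \ref{theorem:IPS sample quality} (recall $\alpha,\theta$ arise from Assumption \ref{assumption:kappa} via Assumption \ref{assumption:dissipation}), and the fourth-order moment bound on $\nu$ is in force. Consequently the IPS \eqref{IPS} has a unique $\pi\in\P_1(\mathbb R^{Nd})$, the MVP \eqref{MVP} has a unique $\bar\pi\in\P_1(\mathbb R^d)$, so $\bar\pi^{\otimes N}\in\P_1(\mathbb R^{Nd})$ is well defined; moreover Theorem \ref{theorem:moment uniform} guarantees $\nu\tilde p_{n\tau}$ and $\nu\tilde q_{n\tau}$ have uniform-in-time finite moments, so all the Wasserstein distances below are finite.

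Then for the discrete IPS I would write
\begin{equation*}
\W_1(\nu \tilde p_{n\tau}, \bar\pi^{\otimes N}) \Le \W_1(\nu \tilde p_{n\tau}, \pi) + \W_1(\pi, \bar\pi^{\otimes N}),
\end{equation*}
bound the first term by $C\sqrt{\tau} + Ce^{-\lambda n\tau}$ using part~1 of Theorem \ref{theorem:IPS sample quality} with $\lambda=\lambda(\kappa,L_0,\sigma)$ and $C=C(\kappa,L_0,M_4,\sigma)$, and bound the second term by $C'/\sqrt{N}$ with $C'=C'(\kappa,L_0,\sigma)$ via \eqref{W1 uniform-in-time}. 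Taking $C_1:=C$ and $C_2:=C'$ yields the stated estimate for $(\tilde p_{n\tau})_{n\Ge0}$; running the identical argument with part~2 of Theorem \ref{theorem:IPS sample quality} gives it for $(\tilde q_{n\tau})_{n\Ge0}$. Since neither input depends on the batch size $p$, and $\lambda$ depends on neither $N$ nor $\tau$, the uniformity claims follow at once.

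There is essentially no obstacle: the theorem is a corollary in all but name, and every analytic estimate it rests on---the uniform-in-time moment bound, the reflection-coupling geometric ergodicity of \eqref{IPS}, the triangle-inequality framework of Lemma \ref{lemma:general} and Lemma \ref{lemma:induction}, and the $O(1/\sqrt{N})$ comparison \eqref{W1 uniform-in-time}---has already been proved. The only point deserving care is bookkeeping: $\pi$ and $\bar\pi^{\otimes N}$ must be compared in the same normalized $\W_1$ metric on $\P(\mathbb R^{Nd})$ used throughout the paper, which is exactly the metric in which \eqref{W1 uniform-in-time} is stated, so the constants combine cleanly and retain their asserted dependences.
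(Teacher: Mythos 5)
Your proposal is correct and follows exactly the route the paper takes: the paper states Theorem \ref{theorem:MVP sample quality} as an immediate consequence of combining Theorem \ref{theorem:IPS sample quality} with the bound $\W_1(\bar\pi^{\otimes N},\pi)\Le C/\sqrt{N}$ from Corollary \ref{corollary:MVP ergodicity invariant} via the triangle inequality in the normalized $\W_1$ metric on $\P(\mathbb R^{Nd})$. Your verification of the hypotheses and the bookkeeping of constants matches the intended argument.
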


Using the theory of the propgation of chaos, we may translate the normalized Wasserstein-1 distance in $\P(\mathbb R^{Nd})$ to the Wasserstein-1 distance in $\P(\P(\mathbb R^d))$. Denote the empirical distributions of the discrete IPS \eqref{d-IPS} and the discrete RB--IPS \eqref{d-RB-IPS} by $\tilde\mu_{n\tau}\in\P(\mathbb R^d)$ and $\tilde\mu_{n\tau}^{\RB}\in\P(\mathbb R^d)$, i.e.,
\begin{equation}
	\tilde\mu_{n\tau}(x) = \frac1N\sum_{i=1}^N 
	\delta(x-\tilde X_{n\tau}^i)\in\P(\mathbb R^d),~~~~
	\tilde 
	\mu_{n\tau}^{\RB}(x) = 
	\frac1N\sum_{i=1}^N \delta(x-\tilde Y_{n\tau}^i)\in\P(\mathbb R^d).
\end{equation}
Since $\tilde X_{n\tau},\tilde Y_{n\tau}$ are 
random variables with distribution laws $\nu \tilde p_{n\tau},\nu \tilde q_{n\tau}$, the empirical distributions $\tilde\mu_{n\tau},\tilde\mu_{n\tau}^{\RB}$ are actually random measures on $\mathbb R^d$, and thus their distribution laws $\Law(\tilde\mu_{n\tau}),\Law(\tilde\mu_{n\tau}^{\RB})$ can be identified as elements of $\P(\P(\mathbb R^d))$.
By Proposition 2.14 of \cite{chaos_1}, we have
\begin{equation}
	\W_1(\nu\tilde p_{n\tau}, \bar \pi^{\otimes N}) = 
	\W_1(\Law(\tilde \mu_{n\tau}),\delta_{\bar\pi}),~~~~
	\W_1(\nu\tilde q_{n\tau}, \bar \pi^{\otimes N}) = 
	\W_1(\Law(\tilde \mu_{n\tau}^{\RB}),\delta_{\bar\pi}).
\end{equation}
Here, $\W_1$ on the LHS is the normalized Wasserstein-1 distance in $\P(\mathbb R^{Nd})$ defined in \eqref{Wasserstein}, and $\W_1$ on the RHS is the Wasserstein-1 distance in $\P(\P(\mathbb R^d))$ defined in Definition 3.5 of \cite{chaos_3}. Since $\delta_{\bar\pi}$ is the Dirac measure in $\P(\P(\mathbb R^d))$, we have
\begin{equation}
	\W_1(\Law(\tilde \mu_{n\tau}),\delta_{\bar\pi}) = 
	\mathbb E\big[\W_1(\tilde \mu_{n\tau},\bar\pi)\big],~~~~
	\W_1(\Law(\tilde \mu_{n\tau}^{\RB}),\delta_{\bar\pi}) = 
	\mathbb E\big[\W_1(\tilde \mu_{n\tau}^{\RB},\bar\pi)\big].
\end{equation}

Concluding the discussion above, we have
the following equivalent form of Theorem \ref{theorem:MVP sample quality}.
\begin{corollary}
\label{corollary:MVP sample quality}
Under Assumptions \ref{assumption:bounded} and \ref{assumption:kappa}, if there exists a constant $M_4$ such that
$$
	\max_{1\Le i\Le N}\int_{\mathbb R^{Nd}} 
	|x^i|^4\nu(\d x) \Le M_4,
$$
and the constant $L_1$ in \eqref{boundedness 1} and the time step $\tau$ satisfy
$$
	L_1 < \frac{c_0\vp_0\sigma^2}{16},~~~~
	\tau<\min\bigg\{
	\frac{\alpha}{2L_0^2},\frac1{2\alpha}
	\bigg\},
$$
then there exist positive constants $\lambda = \lambda(\kappa,L_0,\sigma)$, $C_1 = C_1(\kappa,L_0,M_4,\sigma)$ and $C_2 = C_2(\kappa,L_0,\sigma)$
\begin{equation}
	\max\Big\{
	\mathbb E \big[\W_1(\tilde\mu_{n\tau},\bar\pi)\big],
	\mathbb E \big[\W_1(\tilde\mu_{n\tau}^{\RB},\bar\pi)\big]
	\Big\}
	\Le C_1\sqrt{\tau} + C_1e^{-\lambda n\tau} + \frac{C_2}{\sqrt{N}},~~~~
	\forall n\Ge0.
	\label{MVP error terms}
\end{equation}
\end{corollary}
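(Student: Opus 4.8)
The plan is to read off Corollary \ref{corollary:MVP sample quality} from Theorem \ref{theorem:MVP sample quality} by rewriting the normalized Wasserstein-1 bound on $\P(\mathbb R^{Nd})$ as a bound on the expected $\W_1$ distance in $\P(\mathbb R^d)$ between the empirical measures $\tilde\mu_{n\tau},\tilde\mu_{n\tau}^{\RB}$ and the invariant law $\bar\pi$ of the MVP \eqref{MVP}; since the analytic content is already in place, the argument is just a chain of identities.

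First I would invoke Theorem \ref{theorem:MVP sample quality}, which under the present hypotheses supplies constants $\lambda=\lambda(\kappa,L_0,\sigma)$, $C_1=C_1(\kappa,L_0,M_4,\sigma)$ and $C_2=C_2(\kappa,L_0,\sigma)$ with
\[
\W_1(\nu\tilde p_{n\tau},\bar\pi^{\otimes N})\Le C_1\sqrt\tau+C_1e^{-\lambda n\tau}+\frac{C_2}{\sqrt N},\qquad \forall n\Ge0,
\]
and the same estimate with $\tilde q_{n\tau}$ in place of $\tilde p_{n\tau}$. Next I would apply Proposition 2.14 of \cite{chaos_1}: for an exchangeable law $\mu\in\P(\mathbb R^{Nd})$, the normalized $\W_1$ distance between $\mu$ and $\bar\pi^{\otimes N}$ coincides with the $\W_1$ distance in $\P(\P(\mathbb R^d))$ between the law of the empirical measure of a $\mu$-distributed $N$-tuple and the Dirac mass $\delta_{\bar\pi}$. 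Applying this to $\mu=\nu\tilde p_{n\tau}$ and $\mu=\nu\tilde q_{n\tau}$ gives
\[
\W_1(\nu\tilde p_{n\tau},\bar\pi^{\otimes N})=\W_1\big(\Law(\tilde\mu_{n\tau}),\delta_{\bar\pi}\big),\qquad
\W_1(\nu\tilde q_{n\tau},\bar\pi^{\otimes N})=\W_1\big(\Law(\tilde\mu_{n\tau}^{\RB}),\delta_{\bar\pi}\big),
\]
and since $\delta_{\bar\pi}$ is a Dirac mass the only transport plan to it is the product coupling, whence $\W_1(\Law(\tilde\mu_{n\tau}),\delta_{\bar\pi})=\mathbb E[\W_1(\tilde\mu_{n\tau},\bar\pi)]$ and likewise for $\tilde\mu_{n\tau}^{\RB}$. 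Substituting these identities into the bound from Theorem \ref{theorem:MVP sample quality} yields \eqref{MVP error terms} with the same $\lambda,C_1,C_2$.

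The only point that needs spelling out is the exchangeability hypothesis of Proposition 2.14 of \cite{chaos_1}, i.e. that $\nu\tilde p_{n\tau}$ and $\nu\tilde q_{n\tau}$ are invariant under permutations of the $N$ particle blocks. I would verify this by induction on $n$: the initial law is exchangeable (for instance a product measure), the Euler--Maruyama updates in \eqref{d-IPS}--\eqref{d-RB-IPS} treat the particle indices symmetrically, and the random batch division at each step is drawn uniformly over all partitions of $\{1,\dots,N\}$ into $q$ equal batches, so it commutes with index relabellings; exchangeability is therefore preserved at every iteration. I expect this bookkeeping, rather than any genuine analytic difficulty, to be the only thing requiring attention, since the quantitative estimates are inherited wholesale from Theorem \ref{theorem:MVP sample quality} and the propagation-of-chaos results of Section 4.
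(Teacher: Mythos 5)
Your proposal is correct and follows the paper's own route exactly: invoke Theorem \ref{theorem:MVP sample quality}, then use Proposition 2.14 of \cite{chaos_1} to identify $\W_1(\nu\tilde p_{n\tau},\bar\pi^{\otimes N})$ with $\W_1(\Law(\tilde\mu_{n\tau}),\delta_{\bar\pi})$, and finally note that the distance to the Dirac mass $\delta_{\bar\pi}$ is $\mathbb E[\W_1(\tilde\mu_{n\tau},\bar\pi)]$. Your explicit verification of exchangeability of $\nu\tilde p_{n\tau}$ and $\nu\tilde q_{n\tau}$ is a detail the paper leaves implicit, and is a worthwhile addition.
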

Corollary \ref{corollary:MVP sample quality} characterizes the long-time sampling error of the numerical methods \eqref{d-IPS}\eqref{d-RB-IPS} for the MVP \eqref{MVP}. The error terms in the RHS of \eqref{MVP error terms} consist of three parts:
\begin{enumerate}
\setzero
\item $C_1\sqrt{\tau}$: time discretization and random batch divisions;
\item $C_1e^{-\lambda n\tau}$: exponential convergence of the numerical method;
\item $C_2/\sqrt{N}$: uniform-in-time propagation of chaos.
\end{enumerate}
If we aim to achieve $O(\ep)$ error in the $\W_1$ distance, then the parameters of the numerical methods should be chosen as
\begin{equation}
	N = O(\ep^{-2}),~~~~\tau = O(\ep^2),~~~~n\tau = O(\log\ep^{-1}),
\end{equation}
then the complexity of the discrete IPS \eqref{d-IPS} and the discrete RB--IPS \eqref{d-RB-IPS} is $O(\ep^{-6}\log\ep^{-1})$ and $O(\ep^{-4}\log\ep^{-1})$ respectively. In this way, the discrete RB--IPS \eqref{d-RB-IPS} consumes less complexity to achieve the desired error tolerance.

\section{Conclusion}
In this paper we have employed the triangle inequality framework to study the long-time error of the discrete RB--IPS \eqref{d-RB-IPS}, and showed that the discrete RB--IPS \eqref{d-RB-IPS} is a reliable numerical approximation to the IPS \eqref{IPS} and the MVP \eqref{MVP}. The triangle inequality framework is a flexible approach to estimate the long-time error using the geometric ergodicity and the finite-time error analysis. It is expected that such an error analysis framework can be used to estimate the long-time error of a wide class of stochastic processes.

\section*{Acknowledgements}
Z. Zhou is supported by the National Key R\&D Program of China, Project Number 2020YFA0712000, 2021YFA1001200, and NSFC grant Number 12031013, 12171013. The authors would like to thank Prof.~Zhenfu Wang and Lei Li for the helpful discussion on the propagation of chaos, and Prof.~Jian-Guo Liu for the discussion on the geometric ergodicity.

\appendix
\begin{appendices}
\section{Additional proofs for Sections 3 and 4}
\begin{proofof}\textbf{of Lemma \ref{lemma:IPS moment}}
Let us consider the IPS \eqref{IPS} first.
By Ito's formula,
\begin{equation}
	\d |X_t^i|^2 = 2X_t^i\cdot (b^i(X_t^i)+\sigma\d W_t^i) + d\sigma^2\d t.
\end{equation}
Hence
\begin{align*}
	\mathbb E|X_t^i|^2 & = \mathbb E|X_0^i|^2 + 2
	\int_0^t X_s^i\cdot b^i(X_s)\d s + d \sigma^2 t \\
	& = \mathbb E|X_0^i|^2 + 2
		\int_0^t X_s^i\cdot \big(b(X_s^i) + \gamma^i(X_s)\big)\d s + d\sigma^2 t.
\end{align*}
On the one hand, $\gamma^i$ is uniformly bounded by $L_1$, hence
\begin{equation}
	2\int_0^t X_s^i\cdot \gamma^i(X_s)\d s \Le 2L_1\int_0^t|X_s^i|\d s \Le L_1\int_0^t |X_s^i|^2\d s + L_1t.
\end{equation}
On the other hand, using the linear growth condition on $b$ one has
\begin{align}
	2\int_0^t X_s^i\cdot b(X_s^i)\d s & \Le 2L_0
	\int_0^t (|X_s^i|^2 + |X_s^i|)\d s \notag \\
	& \Le L_0 \int_0^t (3|X_s^i|^2 + 1)\d s \notag \\
	& \Le 3L_0 \int_0^t |X_s^i|^2 + L_0t.
\end{align}
Using these inequalities, one obtains
\begin{equation}
	2\int_0^t X_s^i\cdot \big(b(X_s^i) + \gamma^i(X_s)\big)\d s \Le (3L_0+L_1)
	\int_0^t |X_s^i|^2\d s + (L_0+L_1)t.
\end{equation}
Let $L:=3L_0+L_1+d\sigma^2$, then for any $t\in[0,T]$,
\begin{align*}
	\mathbb E|X_t^i|^2 & \Le \mathbb E|X_0^i|^2 + L\int_0^t|X_s^i|^2\d s + Lt \\
	& \Le M + LT + L\int_0^t|X_s^i|^2\d s.
\end{align*}
Using Gronwall's inequality,
\begin{equation}
	\mathbb E|X_t^i|^2 \Le (M+LT)\exp(LT),~~~~t\in[0,T],
\end{equation}
which yields the first inequality of \eqref{IPS moment finite time}. For the second inequality of \eqref{IPS moment finite time},
use the SDE
\begin{equation}
	X_t^i - X_{t_n}^i = \int_{t_n}^t b^i(X_s)\d s + \sigma(W_t^i - W_{t_n}^i).
\end{equation}
Hence
\begin{equation}
	\mathbb E|X_t^i - X_{t_n}^i|^2 \Le 2\mathbb E
	\bigg|
	\int_{t_n}^t b^i(X_s)\d s 
	\bigg|^2 + 2d\sigma^2\tau\Le 
	2\tau
		\int_{t_n}^t \mathbb E|b^i(X_s)|^2 \d s 
	 + 2d\sigma^2\tau
	.
\end{equation}
Using the linear growth condition
\begin{equation}
	|b^i(x)| \Le |b(x^i)| + |\gamma^i(x)| \Le L_0(|x^i|+1) + L_1 \Le L(|x^i|+1),
\end{equation}
one has $|b^i(x)|^2 \Le 2L^2(|x^i|^2+1)$ and thus
\begin{align*}
	\mathbb E|X_t^i - X_{t_n}^i|^2 & \Le 
	4L^2\tau \int_{t_n}^t \big(
	\mathbb E|X_s^i|^2+1\big)\d s + 2d\sigma^2\tau \\
	& \Le 4L^2C\tau + 4L^2\tau + 2d\sigma^2\tau = C\tau,
\end{align*}
which is exactly the desired result. The proof above also holds true for the RB--IPS \eqref{RB-IPS} because we only need to use $|\gamma^i(x)| \Le L_1$ in each time step $[t_n,t_{n+1})$.
\end{proofof}
\begin{proofof}\textbf{of Theorem \ref{theorem:IPS strong}}
WLOG assume the time step $\tau\Le T$.
Define the trajectory difference $e_n^i = X_{t_n}^i - \tilde X_n^i$. The IPS (\ref{IPS}) and the discrete IPS (\ref{d-IPS}) are given by
\begin{equation}
	X_{t_{n+1}}^i = X_{t_n}^i + \int_{t_n}^{t_{n+1}}
	b^i(X_t)\d t + \sigma W_\tau^i, ~~~~
	\tilde X_{n+1}^i = \tilde X_n + \int_{t_n}^{t_{n+1}}
	b^i(\tilde X_n)\d t + \sigma W_\tau^i,
\end{equation}
where $W_\tau^i := W_{t_{n+1}}^i - W_{t_n}^i\sim\N(0,\tau)$.
Then $e_n^i$ satisfies the recurrence relation
\begin{equation}
	e_{n+1}^i = e_n^i + \int_{t_n}^{t_{n+1}}
	(b^i(X_t)-b^i(\tilde X_n))\d t.
	\label{appendix:e_n update}
\end{equation}
Recall $b^i(x) = b(x^i) + \gamma^i(x)$.
Squaring both sides of \eqref{appendix:e_n update} we obtain
\begin{align*}
	|e_{n+1}^i|^2 & \Le (1+\tau)|e_n^i|^2 +
	\Big(1+\frac1\tau\Big)
	\bigg(
		\int_{t_n}^{t_{n+1}}
		(b^i(X_t) - b^i(\tilde X_n))\d t
	\bigg)^2 \\
	& \Le (1+\tau)|e_n^i|^2 + (1+\tau)
	\int_{t_n}^{t_{n+1}}
	|b^i(X_t) - b^i(\tilde X_n)|^2\d t \\
	& \Le (1+\tau)|e_n^i|^2 + 2(1+\tau)
	\int_{t_n}^{t_{n+1}}
		|b(X_t^i) - b(\tilde X_n^i)|^2\d t + 2(1+\tau)
		\int_{t_n}^{t_{n+1}}
		|\gamma^i(X_t) - \gamma^i(\tilde X_n)|^2\d t.
\end{align*}
On the one hand, the global Lipschitz condition of $b$ implies
\begin{equation}
	|b(X_t^i) - b(\tilde X_n^i)| \Le L_0|X_t^i - \tilde X_n^i| \Longrightarrow 
	\int_{t_n}^{t_{n+1}}
	|b(X_t^i) - b(\tilde X_n^i)|^2 \Le 
	L_0^2 \int_{t_n}^{t_{n+1}}
	|X_t^i - \tilde X_n^i|^2\d t.
	\label{appendix:IPS strong 1}
\end{equation}
On the other hand, the boundedness of $\gamma^i$ implies
\begin{align}
	|\gamma^i(X_t) - \gamma^i(\tilde X_n)| & \Le L_1|X_t^i - \tilde X_n^i| + \frac{L_1}{N-1}
	\sum_{j\neq i} |X_t^j - \tilde X_n^j| \Longrightarrow \notag \\
	|\gamma^i(X_t) - \gamma^i(\tilde X_n)|^2 & \Le 2L_1^2|X_t^i - \tilde X_n^i|^2 + 2L_1^2 \bigg(
		\frac1{N-1}\sum_{j\neq i} |X_t^j - \tilde X_n^j|
	\bigg)^2 \notag \\
	& \Le 2L_1^2|X_t^i - \tilde X_n^i|^2 + \frac{2L_1^2}{N-1}
	\sum_{j\neq i}|X_t^j-\tilde X_n^j|^2 \Longrightarrow \notag \\
	\int_{t_n}^{t_{n+1}}
	|\gamma^i(X_t) - \gamma^i(\tilde X_n)|^2\d t & \Le
	2L_1^2 \int_{t_n}^{t_{n+1}} |X_t^i - \tilde X_n^i|^2\d t + 
	\frac{2L_1^2}{N-1}\sum_{j\neq i}
	\int_{t_n}^{t_{n+1}} |X_t^j - \tilde X_n^j|^2\d t.
	\label{appendix:IPS strong 2}
\end{align}
Combining \eqref{appendix:IPS strong 1}\eqref{appendix:IPS strong 2}, $e_{n+1}^i$ has the estimate
\begin{align*}
	|e_{n+1}^i|^2 & \Le (1+\tau) |e_n^i|^2 + 
	(1+\tau) (2L_0^2+4L_1^2)\int_{t_n}^{t_{n+1}}
	|X_t^i - \tilde X_n^i|^2\d t \,+ \\
	& ~~~~(1+\tau)
	\frac{4L_1^2}{N-1}
	\sum_{j\neq i}
	\int_{t_n}^{t_{n+1}} 
	|X_t^j - \tilde X_n^j|^2\d t.
\end{align*}
Summation over $i$ gives
\begin{equation}
	\sum_{i=1}^N |e_{n+1}^i|^2 \Le 
	(1+\tau)\sum_{i=1}^N |e_n^i|^2 + (1+\tau)(2L_0^2 + 8L_1^2) \sum_{i=1}^N 
	\int_{t_n}^{t_{n+1}}|X_t^i - \tilde X_n^i|^2\d t.
	\label{appendix:IPS sum of e2}
\end{equation}
Note that
\begin{equation}
	|X_t^i - \tilde X_n^i|^2 \Le 
	2|X_t^i - X_{t_n}^i|^2 + 2|X_{t_n}^i-\tilde X_n^i|^2,
\end{equation}
from Lemma \ref{lemma:IPS moment} we have
\begin{equation}
	\mathbb E|X_t^i - \tilde X_n^i|^2\Le C\tau + 2\mathbb E|e_n^i|^2.
	\label{appendix:IPS C_tau_e}
\end{equation}
Integrating \eqref{appendix:IPS C_tau_e} in the time interval $[t_n,t_{n+1})$ gives
\begin{equation}
	\int_{t_n}^{t_{n+1}} \mathbb E
	|X_t^i - \tilde X_n^i|^2 \Le C\tau^2 + 2\tau\mathbb E|e_n^i|^2.
\end{equation}
Taking the expectation in (\ref{appendix:IPS sum of e2}) gives
\begin{align}
	\sum_{i=1}^N \mathbb E|e_{n+1}^i|^2 & \Le 
	(1+\tau)\sum_{i=1}^N \mathbb E|e_n^i|^2 + 
	C(1+\tau)\Big(
		N\tau^2 + \tau\sum_{i=1}^N\mathbb E |e_n^i|^2
	\Big) \notag \\
	& \Le (1+C\tau)\sum_{i=1}^N \mathbb E|e_n^i|^2 + CN\tau^2.
	\label{appendix:IPS strong check point}
\end{align}
Note that $e_0^i\equiv0$, the discrete Gronwall's inequality thus gives
\begin{equation}
	\frac1N\sum_{i=1}^N \mathbb E|e_n^i|^2 \Le  \tau \Big(
		(1+C\tau)^n-1
	\Big) \Le 
	e^{CT}\tau = C\tau,
\end{equation}
which implies the strong error is bounded by $C\tau$ for $0\Le n\Le T/\tau$.

Now we turn to the random batch case.
Let $e_n^i = Y_{t_n}^i - \tilde Y_n^i$, then $e_n^i$ satisfies
\begin{equation}
	|e_{n+1}^i|^2 \Le (1+\tau)|e_n^i|^2 + 2(1+\tau)
		\int_{t_n}^{t_{n+1}}
			|b(Y_t^i) - b(\tilde Y_n^i)|^2\d t + 2(1+\tau)
			\int_{t_n}^{t_{n+1}}
			|\gamma^i(Y_t) - \gamma^i(\tilde Y_n)|^2\d t.
\end{equation}
Again we stress that the perturbation force $\gamma^i(x)$ depends on the batch division $\D=\{\C_1,\cdots,\C_q\}$. Regardless of the batch division in the time interval $[t_n,t_{n+1})$, we have the inequalities
\begin{equation}
	\int_{t_n}^{t_{n+1}}
	|b(Y_t^i) - b(\tilde Y_n^i)|^2 \Le 
	L_0^2 \int_{t_n}^{t_{n+1}}
	|Y_t^i - \tilde Y_n^i|^2\d t
	\label{appendix:RB-IPS strong 1}
\end{equation}
and
\begin{equation}
	|\gamma^i(Y_t) - \gamma^i(\tilde Y_n)| \Le
		2L_1^2 \int_{t_n}^{t_{n+1}} |Y_t^i - \tilde Y_n^i|^2\d t + 
		\frac{2L_1^2}{p-1}\sum_{j\neq i,j\in\C}
		\int_{t_n}^{t_{n+1}} |Y_t^j - \tilde Y_n^j|^2\d t.
	\label{appendix:RB-IPS strong 2}
\end{equation}
Combining \eqref{appendix:RB-IPS strong 1}\eqref{appendix:RB-IPS strong 2}, $e_{n+1}^i$ has the estimate
\begin{align*}
	|e_{n+1}^i|^2 & \Le (1+\tau) |e_n^i|^2 + 
	(1+\tau) (2L_0^2+4L_1^2)\int_{t_n}^{t_{n+1}}
	|Y_t^i - \tilde Y_n^i|^2\d t \,+ \\
	& ~~~~(1+\tau)
	\frac{4L_1^2}{p-1}
	\sum_{j\neq i,j\in\C}
	\int_{t_n}^{t_{n+1}} 
	|Y_t^j - \tilde Y_n^j|^2\d t.
\end{align*}
Summation over $i\in\C$ and $\C\in\D$ recovers
\begin{equation}
	\sum_{i=1}^N |e_{n+1}^i|^2 \Le 
	(1+\tau)\sum_{i=1}^N |e_n^i|^2 + (1+\tau)(2L_0^2 + 8L_1^2) \sum_{i=1}^N 
	\int_{t_n}^{t_{n+1}}|Y_t^i - \tilde Y_n^i|^2\d t.
	\label{appendix:RB-IPS error sum}
\end{equation}
Using the same strategy with Theorem \ref{theorem:IPS strong}, we have
\begin{equation}
	\mathbb E|Y_t^i - \tilde Y_n^i|^2\Le C\tau + 2\mathbb E|e_n^i|^2.
\end{equation}
Taking the expectation in \eqref{appendix:RB-IPS error sum} the gives
\begin{align}
	\sum_{i=1}^N \mathbb E|e_{n+1}^i|^2 & \Le 
	(1+\tau)\sum_{i=1}^N \mathbb E|e_n^i|^2 + 
	C(1+\tau)\Big(
		N\tau^2 + \tau\sum_{i=1}^N\mathbb E |e_n^i|^2
	\Big) \notag \\
	& \Le (1+C\tau)\sum_{i=1}^N \mathbb E|e_n^i|^2 + CN\tau^2,
\end{align}
which is exactly the same with \eqref{appendix:IPS strong check point} in the proof of Theorem \ref{theorem:IPS strong}.
The rest part of the proof is completely the same with Theorem \ref{theorem:IPS strong}.
\end{proofof}
\begin{proofof}{\textbf{of Lemma \ref{lemma:f24}}}
(1) First we estimate $|f(x,\tau)|^2$. Using Assumptions \ref{assumption:bounded} and \ref{assumption:dissipation}, we have
\begin{align}
	|f(x,\tau)|^2 & = |x+b(x)\tau|^2 \notag \\
	& = |x|^2 + 2x\cdot b(x) \tau + |b(x)|^2\tau^2 \notag \\
	& \Le |x|^2 + 2(\theta-\alpha|x|^2)\tau + 2L_0^2(|x|^2+1)\tau^2 \\
	& = (1+2L_0^2\tau^2-2\alpha\tau)|x|^2 + 2\theta\tau + 2L_0^2\tau^2.
	\label{appendix:f21}
\end{align}
Since $\tau<\alpha/(2L_0^2)$, \eqref{appendix:f21} implies
\begin{equation}
	|f(x,\tau)|^2 \Le (1-\alpha\tau)|x|^2 + (\alpha+2\theta)\tau,
	\label{appendix:f22}
\end{equation}
To estimate $|f(x,\tau)|^4$, sqaure both sides of \eqref{appendix:f22} and utilize $\tau<1/(2\alpha)$, then
\begin{align}
	|f(x,\tau)|^4 & \Le (1-\alpha\tau)^2|x|^4 + 2(\alpha+2\theta)|x|^2\tau + (\alpha+2\theta)^2\tau^2 \notag \\
	& \Le (1-\alpha\tau)^2|x|^4 + \Big(
		k\tau|x|^4 + 
		\frac{(\alpha+2\theta)^2}{k}\tau
	\Big) + (\alpha+2\theta)^2\tau^2 \notag \\
	& \Le (1-2\alpha\tau +\alpha^2\tau^2 + k\tau)|x|^4 + 
	\frac{(\alpha+2\theta)^2}{k}\tau + (\alpha+2\theta)^2\tau^2 \notag \\
	& = \Big(1-\frac{3\alpha\tau}2 + k\tau\Big)|x|^4 + O(\tau),
	\label{appendix:f4tau}
\end{align}
where $k>0$ is an $O(1)$ parameter to be determined. Choose $k = \alpha/2$ in \eqref{appendix:f4tau}, then
\begin{equation}
	|f(x,\tau)|^4 \Le 
	(1-\alpha\tau)|x|^4 + O(\tau),
\end{equation}
hence \eqref{f24:inequality 1} holds.\\[6pt]
(2) By direct calculation,
\begin{align*}
	|f(x,\tau) + \gamma\tau|^2 & \Le 
	|f(x,\tau)|^2 + 2|f(x,\tau)|L_1\tau + L_1^2\tau^2 \\
	& \Le |f(x,\tau)|^2 + \Big(|f(x,\tau)|^2k\tau + \frac{L_1^2}k\tau\Big) + L_1^2\tau^2 \\
	& = (1+k\tau)|f(x,\tau)|^2 + O(\tau),
\end{align*}
where $k>0$ is an $O(1)$ parameter to be determined. By \eqref{f24:inequality 1}, we choose $k = \alpha/2$ and
\begin{align}
	|f(x,\tau) + \gamma\tau|^2 & \Le 
	\Big(1+\frac{\alpha\tau}2\Big)|f(x,\tau)|^2 + O(\tau) \notag \\
	& \Le \Big(1+\frac{\alpha\tau}2\Big)\Big(
		(1-\alpha\tau)|x|^2 + O(\tau)
	\Big) + O(\tau) \notag \\
	& \Le \Big(1-\frac{\alpha\tau}2\Big)|x|^2 + O(\tau).
	\label{appendix:fgamma2}
\end{align}
Squaring both sides of \eqref{appendix:fgamma2}, one obtains
\begin{align*}
	|f(x,\tau)+\gamma\tau|^4 & \Le \bigg(
		\Big(1-\frac{\alpha\tau}2\Big)|x|^2 + C\tau
	\bigg)^2 \\
	& = \bigg(1-\alpha\tau+\frac{\alpha^2\tau^2}4\bigg)|x|^4 + 2C|x|^2\tau + O(\tau^2) \\
	& \Le \Big(1-\frac{3\alpha\tau}4\Big)|x|^4 + \Big(k\tau|x|^4 + \frac{C^2\tau}{k}\Big) + O(\tau^2) \\
	& = \Big(1-\frac{3\alpha\tau}4+k\tau\Big)|x|^4 + O(\tau),
\end{align*}
where $k>0$ is a $O(1)$ parameter. By choosing $k = \alpha/4$, \eqref{f24:inequality 2} holds true.
\end{proofof}
\begin{proofof}\textbf{of Theorem \ref{theorem:moment uniform}}
The update scheme of the discrete IPS trajectory $\tilde X_n^i$ is given by
\begin{equation}
	\tilde X_{n+1}^i = \tilde X_n^i + b(\tilde X_n^i)\tau + \gamma^i(\tilde X_n)\tau + \sigma W_\tau^i,
	\label{appendix:X update}
\end{equation}
where $W_\tau^i\sim \N(0,\tau)$, and $\gamma^i$ is defined in \eqref{IPS gamma}. With $f(x,\tau) = x+b(x)\tau$, we can write \eqref{appendix:X update} as
\begin{equation}
	\tilde X_{n+1}^i = f(\tilde X_n^i,\tau) + 
	\gamma^i(\tilde X_n)\tau + \sigma W_\tau^i.
\end{equation}
Note that the random variable $W_\tau^i$ is independent of $\tilde X_n^i$, we have
\begin{align}
	\mathbb E|\tilde X_{n+1}^i|^4 & = \mathbb E|f(\tilde X_n^i,\tau) + \gamma^i(\tilde X_n)\tau|^4 + 
		6\,\mathbb E|f(\tilde X_n^i,\tau) + \gamma^i(\tilde X_n)\tau|^2\,\mathbb E|\sigma W_\tau^i|^2+
		\mathbb E|\sigma W_\tau^i|^4 \notag  \\
	& = \mathbb E|f(\tilde X_n^i,\tau) + \gamma^i(\tilde X_n)\tau|^4 + 
	6\,\mathbb E|f(\tilde X_n^i,\tau) + \gamma^i(\tilde X_n)\tau|^2d\sigma^2\tau+
	3d^2\sigma^4\tau^2 \notag \\
	& \Le \mathbb E|f(\tilde X_n^i,\tau) + \gamma^i(\tilde X_n)\tau|^4 + 
	\bigg(k\tau\,\mathbb E|f(\tilde X_n^i,\tau) + \gamma^i(\tilde X_n)\tau|^4  +\frac{9d^2\sigma^4\tau^2}{k}\bigg) + 3d^2\sigma^4\tau^2 \notag \\
	& = (1+k\tau)\mathbb E|f(\tilde X_n^i,\tau) + \gamma^i(\tilde X_n)\tau|^4 + O(\tau^2),
	\label{appendix:moment ex4}
\end{align}
where $k>0$ is an $O(1)$ parameter to be determined.
Since $\gamma^i(\tilde X_n)$ is uniformly bounded by $L_1$, by Lemma \ref{lemma:f24} we have
\begin{equation}
	\mathbb E|f(\tilde X_n^i) + \gamma^i(\tilde X_n)\tau|^4 \Le 
	\Big(1-\frac{\alpha\tau}2\Big)\mathbb E|\tilde X_n^i|^4 + C\tau.
\end{equation}
Hence \eqref{appendix:moment ex4} implies
\begin{align}
	\mathbb E|\tilde X_{n+1}^i|^4 & \Le
	(1+k\tau)\bigg(
		\Big(1-\frac{\alpha\tau}2\Big)\mathbb E|\tilde X_n^i|^4 + C\tau
	\bigg) + O(\tau^2) \notag \\
	&  \Le \bigg(
		1-\Big(\frac{\alpha}2-k\Big)\tau
	\bigg)\mathbb E|\tilde X_n^i|^4 + O(\tau).
	\label{appendix:moment ex4 2}
\end{align}
Now we can choose $k = \alpha/4$ in \eqref{appendix:moment ex4 2} to obtain
\begin{equation}
	\mathbb E|\tilde X_{n+1}^i|^4 \Le \Big(
	1-\frac{\alpha\tau}4
	\Big)\mathbb E|\tilde X_n^i|^4 + C\tau,
\end{equation}
and thus by Gronwall's inequality,
\begin{equation}
	\sup_{n\Ge0} \mathbb E|\tilde X_n^i|^4 \Le \max\bigg\{M_4,\frac{4C}\alpha\bigg\}.
\end{equation}
For the discrete RB--IPS \eqref{d-RB-IPS}, the proof is completely the same because we still have $|\gamma^i(x)|\Le L_1$ and thus the recurrence relation
\begin{equation}
	\mathbb E|\tilde Y_{n+1}^i|^4 \Le \Big(
	1-\frac{\alpha\tau}4
	\Big)\mathbb E|\tilde Y_n^i|^4 + C\tau.
\end{equation}
holds true.
\end{proofof}
\begin{proofof}\textbf{of Lemma \ref{lemma:induction}}
By induction on the integer $s\Ge1$, it is easy to verify if $n\Ge sm$, then
\begin{equation}
	a_n \Le \ep\frac{1-q^s}{1-q} + q^s a_{n-sm}.
\end{equation}
For any integer $n\Ge0$, let $n = sm + r$ for some integer $s\Ge0$ and $r\in\{0,1,\cdots,m-1\}$. Then
\begin{equation}
	a_n \Le \frac{\ep}{1-q} + Mq^s \Le \frac{\ep}{1-q} + 
	Mq^{\frac nm-1},
\end{equation}
yielding \eqref{induction an}.
\end{proofof}
\begin{proofof}\textbf{of Theorem \ref{theorem:MVP ergodicity}}
Given the probability distributions $\mu,\nu\in\P(\mathbb R^d)$, by Theorem \ref{theorem:IPS ergodicity} we have
\begin{equation}
	\W_1(\mu^{\otimes N} p_t, \nu^{\otimes N} p_t) \Le Ce^{-\beta t}\W_1(\mu^{\otimes N},\nu^{\otimes N}) = 
	Ce^{-\beta t} \W_1(\mu,\nu).
\end{equation}
Here, $(p_t)_{t\Ge0}$ is the semigroup of the IPS \eqref{IPS} in $\mathbb R^{Nd}$. Using the triangle inequality, we have
\begin{align*}
	\W_1(\mu\bar p_t,\nu\bar p_t) & = 
	\W_1(\mu^{\otimes N}\bar p_t^{\otimes N},\nu^{\otimes N}\bar p_t^{\otimes N}) \\
	& \Le \W_1(\mu^{\otimes N}\bar p_t^{\otimes N},\mu^{\otimes N} p_t) + 
	\W_1(\nu^{\otimes N}\bar p_t^{\otimes N},\nu^{\otimes N} p_t) + \W_1(\mu^{\otimes N} p_t, \nu^{\otimes N} p_t) \\
	& \Le \W_1(\mu^{\otimes N}\bar p_t^{\otimes N},\mu^{\otimes N} p_t) + \W_1(\nu^{\otimes N}\bar p_t^{\otimes N},\nu^{\otimes N} p_t) +Ce^{-\beta t} \W_1(\mu,\nu).
\end{align*}
By Theorem \ref{theorem:McKean}, for given $t>0$ there exists a constant $C_0 = C_0(\kappa,L_0,\sigma,t)$ such that
\begin{equation}
	\W_1(\mu^{\otimes N}\bar p_t^{\otimes N},\mu^{\otimes N} p_t) \Le \frac{C_0}{N}.
\end{equation}
Hence we obtain
\begin{equation}
	\W_1(\mu\bar p_t,\nu\bar p_t) \Le \frac{2C_0}{\sqrt{N}} + Ce^{-\beta t} \W_1(\mu,\nu).
\end{equation}
Fix $t>0$ and let $N\rightarrow\infty$, we obtain the desired result
\begin{equation}
	\W_1(\mu\bar p_t,\nu\bar p_t) \Le Ce^{-\beta t} \W_1(\mu,\nu).
\end{equation}
\end{proofof}
\begin{proofof}\textbf{of Corollary \ref{corollary:MVP ergodicity invariant}}
First we prove the existence of the invariant distribution $\bar\pi\in\P_1(\mathbb R^d)$ of the MVP \eqref{MVP}. Since $(\bar p_t)_{t\Ge0}$ is a nonlinear semigroup, we cannot use the same technique as in the linear case. Our proof below is partially inspired from Theorem 5.1 of \cite{invariant}.
Choose the constant $T$ which satisfies $Ce^{-\beta T} = 1/2$, then we have
\begin{equation}
	\W_1(\mu\bar p_T,\nu\bar p_T) \Le \frac12\W_1(\mu,\nu)
\end{equation}
for any probability distributions $\mu,\nu\in\P_1(\mathbb R^d)$. Hence the mapping $\mu\mapsto \mu \bar p_T$ is contractive in the complete metric space $(\P_1(\mathbb R^d),\W_1(\cdot,\cdot))$. Using the Banach fixed point theorem, there exists a unique fixed point $\bar\pi\in\P_1(\mathbb R^d)$ such that
\begin{equation}
	\bar\pi\bar p_T = \bar \pi.
\end{equation}
Since $(\bar p_t)_{t\Ge0}$ forms a semigroup, for any $t\Ge0$ we have
\begin{equation}
	\big(\bar \pi\bar p_t\big) \bar p_T= \bar \pi \bar p_t,
\end{equation}
which implies $\bar\pi\bar p_t\in\P_1(\mathbb R^d)$ is the invariant distribution of the operator $\bar p_T$. Due to the uniqueness of the invariant distribution $\bar\pi$ for the operator $\bar p_T$, we obtain
\begin{equation}
	\bar \pi \bar p_t = \bar \pi,~~~~\forall t\Ge0,
\end{equation} 
hence $\bar\pi\in\P_1(\mathbb R^d)$ is the invariant distribution of the semigroup $(\bar p_t)_{t\Ge0}$.

Next we estimate the difference between the invariant distributions $\pi,\bar\pi\in\P_1(\mathbb R^d)$.
We still choose the constant $T$ according to $Ce^{-\beta T} = 1/2$.
Using the triangle inequality, there exists a constant $C = C(\kappa,L_0,\sigma)$ such that
\begin{align*}
	\W_1(\bar\pi^{\otimes N},\pi) & = 
	\W_1(\bar\pi^{\otimes N}\bar p_{T}^{\otimes N},\pi p_{T}) \\
	& \Le 
	\W_1(\bar\pi^{\otimes N}\bar p_{T}^{\otimes N},\bar\pi^{\otimes N}p_{T}) + 
	\W_1(\bar\pi^{\otimes N}p_{T},\pi p_{T}) \\
	& \Le 
	\frac{C}{\sqrt{N}} + Ce^{-\beta T} \W_1(\bar\pi^{\otimes N},\pi) \\
	& = \frac{C}{\sqrt{N}} + 
	\frac12\W_1(\bar\pi^{\otimes N},\pi)
\end{align*}
Then $\W_1(\bar\pi^{\otimes N},\pi) \Le C/\sqrt{N}$.
\end{proofof}
\end{appendices}
\bibliography{reference.bib}
\bibliographystyle{unsrt}
\end{document}